\theoremstyle{plain} 
\newtheorem{thm}{Theorem}
\newtheorem{lem}[thm]{Lemma} 
\newtheorem{conjecture}[thm]{Conjecture}
\theoremstyle{remark}
\newtheorem*{remark}{Remark}
\def\F{\mathcal{F}}
\def\bth{\begin{thm}}
\def\eth{\begin{thm}}
\def\bc{\begin{corollary}}
\def\ec{\end{corollary}}
\def\bcj{\begin{conjecture}}
\def\ecj{\end{conjecture}}
\newcommand{\bizveg}{{\hfill$\Box$}}
\author{Andr\'as Csernenszky} 
\address{Department of Computer Science\\
University of Szeged\\}
\email{csernenszkya@gmail.com}
\title[The Chooser-Picker 7-in-a-row-game]
      {The Chooser-Picker 7-in-a-row-game}
\keywords{Positional games, Chooser-Picker games, Beck's conjecture}
\subjclass{91A24, 05C65, 05C15}
\begin{document}

\begin{abstract}
One of the main objective of this paper is to relate Beck's conjecture for
$k$-in-a-row games. The conjecture states that playing on the same board 
Picker is better off in a Chooser-Picker game than the second player in the 
Maker-Breaker version. It was shown that the 8-in-a-row game is a blocking 
draw that is a Breaker win. To give the outcome of 7-, or 6-in-a-row-games 
is hopeless, but these games are widely believed to be Breaker's win.
If both conjectures hold, Picker must win the Chooser-Picker version of the
$7$-in-a-row game, and that is what we prove.

\end{abstract}

\maketitle

\section{Introduction}

The well-known games of Tic-Tac-Toe, Hex or the 5-in-a-row suggest the 
generalizations as follows, see more in \cite{B-C-G, BB}. A {\em hypergraph}
is a pair $(V, \F)$, where $V$ is a set and $\F \subset 2^V$.
Finally, there are two players in the game that we call {\em first} and 
{\em second.}

The first and second players take elements of $V$ in turns. The player, who 
takes all elements of an edge $A \in \F$ first wins the game. This version 
is sometimes called {\em Maker-Maker} version.  

The so-called {\em Maker-Breaker} version of a Positional Game or 
{\em weak game} on a hypergraph $(V, \F)$ was also investigated from 
the very beginning. 
Here the moves are defined just as before, but winning conditions are 
different. Maker wins by taking all elements of an $A \in \F$, while 
Breaker wins otherwise.
In several cases Maker-Breaker games are more tractable than Maker-Maker 
games. On the other hand, these versions are closely related, since if 
Breaker wins as a second player then the original game is a draw \cite{beckcpc05}, while 
if the first player wins the original game then Maker also wins the 
Maker-Breaker version. This connection gives rise to very useful 
applications, see in \cite{Beck5, beckcpc05, BB, Zetters, H-J}.

In order to understand the very hard clique games Beck introduced the 
Chooser-Picker (and the Picker-Chooser) version of the Maker-Breaker 
games in \cite{Beck5}.  
In these versions Picker selects two vertices of the hypergraph then Chooser 
takes one of them while the other vertex goes back to Picker. Similar to the 
Maker-Breaker games the Chooser wins in the Chooser-Picker game 
if he occupies a whole winning set, and the Picker wins if he can prevent Chooser's win. 
When $|V|$ is odd, the last element goes to Chooser. 
In Picker-Chooser games the winning conditions 
are swapped, and when $|V|$ is odd, the last element goes to Picker.

The study of these games gives invaluable insight to the Maker-Breaker version. 
For some hypergraphs the outcome of the Maker-Breaker and Chooser-Picker
versions is the same \cite{Beck5, CMP}. In all cases it seems that Picker's
position is at least as good as Breaker's. It was formalized in the
following conjecture.

\begin{conjecture} [Beck] \label{Bc}
If Maker (as the second player) wins the 
Maker-Breaker game, then Picker wins the corresponding Picker-Chooser game. 
If Breaker (as the second player) wins the Maker-Breaker game, then also Picker 
wins the Chooser-Picker game.\cite{CMP}
\end{conjecture}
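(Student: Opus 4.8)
The plan is to establish each of the two implications by converting a winning Maker--Breaker strategy on $(V,\F)$ into a winning Picker strategy on the same board, exploiting the extra control Picker enjoys as the player who dictates the pairs. The two clauses are dual: the first places Picker in the role of the claiming player (Picker--Chooser) and is triggered by Maker winning even as the second player, while the second places Picker in the role of the blocker (Chooser--Picker) and is triggered by Breaker winning even as the second player. I would therefore concentrate on the second implication and obtain the first by mirroring the argument, interchanging the claiming and blocking sides together with the parity convention for the last vertex.

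So assume Breaker wins the Maker--Breaker game even in the weaker, second-player role, and let $\sigma$ be such a blocking strategy; the working hypothesis behind the conjecture is that Picker's freedom to choose the pairs more than offsets any tempo advantage, so this assumption should be comfortably sufficient. The first and easiest step is the case in which $\sigma$ is a pairing strategy, i.e.\ a matching $M$ of $V$ such that every $A\in\F$ contains both endpoints of some pair of $M$: here Picker simply offers the pairs of $M$, keeps whichever vertex Chooser declines, and thereby prevents Chooser from ever completing any $A\in\F$. The second step is to treat wins certified by an Erd\H{o}s--Selfridge weight potential, checking that the Chooser--Picker danger function drops under Picker's optimal pair choice at least as fast as Breaker's potential does in the Maker--Breaker game, so that the usual condition on $\sum_{A\in\F}2^{-|A|}$ already forces a Picker win, and with slack. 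Both sub-cases transfer essentially verbatim.

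The heart of the matter, and the step I expect to be the main obstacle, is the general case, where $\sigma$ is an arbitrary adaptive Breaker strategy carrying no pairing or potential structure. The natural device is a simulation: Picker maintains a shadow Maker--Breaker board, proposes a pair $\{x,y\}$ each round, reads Chooser's choice as ``Maker's move'' on the shadow board, consults $\sigma$ for Breaker's reply, and attempts to realise that reply with the vertex Chooser rejects. The difficulty is the timing and information asymmetry: Breaker answers \emph{after} seeing Maker's move, whereas Picker must commit to the entire pair \emph{before} Chooser decides, so Picker cannot in general guarantee that the rejected vertex is the square $\sigma$ prescribes. Overcoming this requires selecting each pair so that \emph{both} of Chooser's possible responses keep Picker consistent with some legitimate continuation of $\sigma$ --- a hedging argument that, in full generality, is exactly what is not known how to carry out, which is why the statement remains a conjecture. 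In the body of the paper we sidestep this reduction and instead verify the required Picker win directly on the specific $7$-in-a-row hypergraph.
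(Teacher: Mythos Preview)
The statement you are addressing is labelled and treated in the paper as a \emph{conjecture}, not a theorem; the paper offers no proof of it whatsoever, and indeed uses it only as motivation for the main result on the $7$-in-a-row game. So there is no ``paper's own proof'' to compare against.

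Your write-up is not a proof either, and to your credit you say so explicitly in the final paragraph: the hedging/simulation device you describe is precisely the step nobody knows how to carry out, and this is why Conjecture~\ref{Bc} is still open. What you have produced is a reasonable heuristic discussion of why the conjecture is plausible (the pairing-strategy and Erd\H{o}s--Selfridge sub-cases do go through, the latter being essentially a result of Beck) together with a clear articulation of the obstruction in the general case. That is valuable context, but it should not be framed as a proof attempt: the first two paragraphs read as if a proof is forthcoming, and only the third paragraph reveals that the argument collapses. If you keep this material, recast it openly as commentary on the status of the conjecture rather than as a purported proof.
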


One can ask what is the use of such a conjecture? Usually it is easier to 
analyze a Chooser-Picker game than the corresponding Maker-Breaker game. 
So if we think that Maker wins a weak game, then to confirm it we first
check the Picker-Chooser version, and we must see that Picker wins. Again, 
if Breaker's win is expected, then the Chooser-Picker version should be a 
Picker's win.

Here our main goal is to prove that Picker wins a specific game, the Chooser-Picker $7$-in-a-row
that will be defined in Section~\ref{amobaresz}. This result has two possible
interpretations. It strengthens both Conjecture~\ref{Bc} and the general 
belief that Breaker wins the Maker-Breaker version of the $7$-in-a-row game
(and therefore the Maker-Maker is a draw). 

The rest of the paper is organized as follows. In Section~\ref{tools} we 
recall some computationally useful facts about general Chooser-Picker games.
It is also necessary to extend the Chooser-Picker games to infinite hypergraphs.
In Section~\ref{amobaresz} we define the $k$-in-a-row games and list of the
basic results. The Section~\ref{cut} contains the plan how to prove Picker's
win in the game $7$-in-a-row, or similar games. Finally we carry out the details
in the Appendix.

\section{On Chooser-Picker games} \label{tools}

Here we list some simple facts from \cite{CMP} that are very useful in analyzing 
concrete games. For the sake of completeness we give the proofs, too.

\begin{lem} \cite{CMP} \label{parositas} 
If in the course of the (Chooser- Picker) game (or just already at the beginning) there is a two 
element winning set $\lbrace x, y\rbrace$ then Picker has an optimal strategy 
starting with $\lbrace x, y\rbrace$ .
\end{lem}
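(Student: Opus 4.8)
The plan is to argue that if Picker has any optimal (winning) strategy at all from the current position, then Picker loses nothing by offering the pair $\{x,y\}$ as the very first move. The intuition is that a two-element winning set is a ``free'' move for Picker: whichever of $x$ or $y$ Chooser takes, the other one comes back to Picker, so Picker ends up owning one of $x,y$ and Chooser owns the other. But owning one element of a winning set of size $2$ means Chooser can never complete that set, so this edge is effectively killed for both players regardless of Chooser's choice. The formal statement to prove is therefore a strategy-stealing / exchange argument: any optimal Picker strategy $\S$ can be converted into an optimal strategy $\S'$ whose first move is $\{x,y\}$.

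First I would set up the exchange carefully. Let $\S$ be an optimal Picker strategy from the given position $P$. Consider the modified strategy $\S'$ that opens with $\{x,y\}$. Chooser takes one of them, say $x$ (the case of $y$ is symmetric), and $y$ returns to Picker. Now Picker is in a position $P'$ that differs from $P$ only in that $x$ has been removed from the board (claimed by Chooser) and $y$ removed (claimed by Picker), and the edge $\{x,y\}$ is dead. Picker should now \emph{pretend} that the game is continuing from $P$ and that it is Chooser's imaginary pair-offer about to be answered; more precisely, Picker maintains a correspondence between the real game (from $P'$ on) and an imaginary game (from $P$ on) in which $\S$ is followed. The key point is to show that the imaginary game never forces Picker to offer the pair $\{x,y\}$ or an already-removed element: since $\{x,y\}$ is a two-element winning set, in the imaginary game $\S$ might at some point want to offer exactly this pair, but if it does, Picker can simply respond in the imaginary game by granting $x$ to Chooser and keeping $y$ — which is precisely the real-game state — so the two games re-synchronize. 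If $\S$ offers a pair containing exactly one of $x,y$, a small extra bookkeeping step handles it (offer the partner vertex together with an arbitrary free vertex, or use the other half of the already-resolved pair).

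The main structural claim to verify is the invariant: at every stage, Chooser's real holdings are a subset of Chooser's imaginary holdings together with possibly $\{x\}$, while nothing Chooser holds in reality is missing from what the imaginary strategy ``allows'' — so that if Chooser fails to complete any winning set in the imaginary game (guaranteed by optimality of $\S$), Chooser also fails in the real game. The edge $\{x,y\}$ itself is handled separately and trivially, since Chooser holds $x$ but not $y$. One then concludes that $\S'$ is winning for Picker, hence optimal, and it begins with $\{x,y\}$ as required. The version of the lemma where the two-element winning set appears only later in the game is obtained by applying the same argument to the position reached at that point.

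The step I expect to be the main obstacle is making the real/imaginary game correspondence precise enough to survive the case where $\S$ wants to offer a pair that intersects $\{x,y\}$ in exactly one vertex, because then the imaginary game and the real game have a ``parity mismatch'' (one element already spoken for on one side but not the other). The cleanest fix is probably to treat $y$ (Picker's retained vertex) as a universally harmless token that Picker can always ``re-offer'' — formally, whenever $\S$ offers $\{y, z\}$ in the imaginary game, Picker in reality offers $\{z, w\}$ for some still-available dummy vertex $w$ and grants Chooser whichever of the imaginary choices corresponds; since $y$ lies in no dangerous configuration for Chooser (its only winning set $\{x,y\}$ is already dead), this substitution costs Picker nothing. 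Verifying that such a dummy $w$ always exists, and that the bookkeeping closes up at the end when $|V|$ is odd (last element to Chooser), is the fiddly part, but it is routine once the invariant is stated correctly.
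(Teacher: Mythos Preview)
Your simulation framework is sound, and the case where $\S$ eventually offers the pair $\{x,y\}$ is handled correctly: you feed the imaginary game the outcome that already occurred in reality and the two runs re-synchronize. The problem is the case you yourself flag as ``the main obstacle,'' where $\S$ offers a pair meeting $\{x,y\}$ in exactly one vertex. Your dummy-vertex patch does not work. If $\S$ offers $\{y,z\}$ in the imaginary game and you replace it in reality by $\{z,w\}$, then when real Chooser picks $w$ you have handed Chooser a vertex that imaginary Chooser never receives; your invariant ``real Chooser $\subseteq$ imaginary Chooser $\cup\{x\}$'' is destroyed, and you can no longer transfer Picker's imaginary win to the real game. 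The cascading debts this creates are not ``routine.'' (Incidentally, the parenthetical ``its only winning set $\{x,y\}$'' is unjustified: nothing in the hypothesis prevents $y$ from lying in many other edges. What is true is that in the \emph{real} game all edges through $y$ are dead for Chooser because Picker owns $y$; but that does not help control what happens at $w$.)

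The clean fix --- and this is precisely the paper's argument --- is to observe that the bad case cannot occur for a winning $\S$. If along some line of play $\S$ offers $x$ in a pair not containing $y$ (or vice versa), then Chooser takes $x$ there; later $y$ must appear either in some other pair or as the final leftover element, and Chooser takes $y$ as well, completing the two-element edge $\{x,y\}$. This contradicts $\S$ being a Picker win. Hence every winning Picker strategy necessarily offers $\{x,y\}$ together the first time either vertex is touched. Once you know that, the bookkeeping problem evaporates: play $\{x,y\}$ first, then follow $\S$, and when $\S$ eventually calls for $\{x,y\}$ simply report the outcome that already happened. No dummies are needed.
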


\begin{proof}
It is enough to see that if Picker has a winning strategy $p$, 
then there exists a starting with $\lbrace x, y\rbrace$  -- call it $p^{*}$ which is 
also Picker win.

If the strategy $p$ asks later $\{x,y\}$:
Assume that playing $p^{*}$ Chooser can win on given distribution of $\lbrace x, y\rbrace$, 
than Chooser could pretend that this distribution already happened before. In this 
way playing strategy $p$ also could use the same strategy. 

If in strategy $p$ Picker compelled to ask not at once $x$ and $y$:

Then Chooser could both ask $x$ and $y$ when they are separately turns up with 
other elements (or one of these is the remaining one for Chooser) in strategy $p$.  
And therefore Chooser wins.
\end{proof}

It looks very desirable to extend such a successful heuristic to games
played on infinite hypergraphs. 
However, one has to be careful since in that case Picker might offer a set of
vertices $A \subset V$ such that every edge contain at most one element from $A$,
which is a trivial winning strategy for Picker.

A possible remedy is add a step at the beginning: Chooser selects a finite set 
$X \in V$, and they play on the {\em induced sub-hypergraph} that is keep only
those edges $A \in \F$ for which $A \subset X$. 
More formally, given the hypergraph $(V, \F)$ let $(V \setminus X, \F(X))$ 
denote the hypergraph 
where $\F(X)=\{A \in \F, A \cap X =\emptyset\}$.

\begin{lem}  \cite{CMP} \label{monotonitas} 
If Picker wins the Chooser-Picker game on $(V, \F)$, then Picker also wins 
it on $(V \setminus X, \F(X))$.
\end{lem}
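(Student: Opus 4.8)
The plan is to have Picker play the restricted game on $(V\setminus X,\F(X))$ while secretly running an auxiliary play of the full game on $(V,\F)$, in which he follows a fixed winning strategy $\sigma$. Write $C$ for the set of vertices the real Chooser has acquired (so $C\subseteq V\setminus X$) and $C'$ for the set acquired by the auxiliary Chooser (so $C'\subseteq V$); Picker will maintain the invariant $C\subseteq C'$. Granting this, the lemma is immediate: if $A\in\F(X)$ then $A\subseteq V\setminus X$ and, since $A\in\F(X)\subseteq\F$ and $\sigma$ defeats every auxiliary Chooser, we have $A\not\subseteq C'$, hence $A\not\subseteq C$, and the real Chooser never occupies $A$.

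To maintain $C\subseteq C'$, Picker reacts to each pair $\{u,v\}$ that $\sigma$ tells him to offer in the auxiliary game according to how it meets $X$. If $u,v\notin X$, he offers $\{u,v\}$ for real and has the auxiliary Chooser take whichever of $u,v$ the real Chooser takes; no further move is made. If $u,v\in X$, he resolves the pair only in the auxiliary game (say the auxiliary Chooser gets $u$) and makes no real move; this is harmless because vertices of $X$ lie in no edge of $\F(X)$. If exactly one of $u,v$ lies outside $X$, say $v$, Picker has the auxiliary Chooser take $v$ (the crucial point) and puts $v$ on a \emph{waiting list}, again with no real move; as soon as two vertices are on the waiting list he offers that pair for real and removes both from the list. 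In the first and third cases the vertex the real Chooser receives has already been placed in $C'$, and in the second case $C$ is untouched, so the invariant survives throughout.

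It remains to check that this recipe describes a legal play of the restricted game in which every vertex of $V\setminus X$ is eventually distributed. Here a routine parity count suffices: the both-outside pairs contribute an even number of vertices of $V\setminus X$, the mixed pairs contribute the waiting-list vertices, which are paired off two at a time, and if the restricted game has odd length the one vertex left on the waiting list at the end is exactly its last, undistributed vertex and therefore passes to the real Chooser — but that vertex was put into $C'$ when it joined the waiting list, so $C\subseteq C'$ still holds. (When $|V|$ is odd, the vertex handed to the auxiliary Chooser for free is simply treated as a waiting-list vertex.) The same argument carries over to infinite hypergraphs, the waiting list being flushed as soon as it reaches size two.

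The step I expect to be the real obstacle is precisely the \emph{mixed} pair $\{u,v\}$ with $u\in X$, $v\notin X$: strategy $\sigma$ compels Picker to dispose of the outside vertex $v$ in the auxiliary game, yet in the real game Picker has no way to dictate who receives $v$. The fix is to always route $v$ to the auxiliary Chooser (never to the auxiliary Picker) and to let such outside vertices be matched only with one another in the real game, so that however the real Chooser responds the vertex it acquires already belongs to $C'$ and the inclusion $C\subseteq C'$ is never broken.
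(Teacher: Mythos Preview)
Your argument is correct and follows the same strategy-stealing idea as the paper: run an auxiliary play of the full game with a winning strategy, and whenever the strategy produces a pair meeting $X$, hand the outside vertex to the auxiliary Chooser so that the real Chooser's set stays inside the auxiliary Chooser's set. The only difference is cosmetic: the paper first reduces by induction to $|X|=1$, so there is at most one leftover vertex $y$ to pair with the final element, whereas you treat general $X$ in one shot via the waiting list; the two arguments are otherwise identical.
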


\begin{proof} By induction it is enough to prove the statement 
for $X=\{x\}$, i.\ e., $|X|=1$.
Assume that $p$ is a winning strategy for Picker in the game on $(V, \F)$.
That is in a certain position of the game the value of the function $p$ is 
a pair of unselected elements that Picker is to give to Chooser. We can 
modify $p$ in order to get a winning strategy $p^*$ for the Chooser-Picker 
game on $(V \setminus \{x\}, \F(\{x\}))$. 

Let us follow $p$ while it does not give a pair $\{x, y\}$. Getting a pair
$\{x, y\}$, we ignore it, and pretend we are playing the game on $(V, \F)$,
where Chooser has taken $y$ and has returned $x$ to us. If $|V|$ is odd, there
is a $z \in V$ at the end of the game that would go to Chooser. Here Picker's
last move is the pair $\{y, z\}$. Picker wins, since Chooser could not win from
this position even getting the whole pair $\{y, z\}$.
If $|V|$ is even, $p^*$ leads to a position 
in which $y$ is the last element, and it goes to Chooser. But the outcome is
then the same as the outcome of the game on $(V, \F)$, that is a Picker's win.
\end{proof}

\section{The k-in-a-row game} \label{amobaresz}

The k-in-a-row game is that hypergraph game, where the vertices of the graphs 
are the fields of an infinite graph paper ($\mathbb{Z}^{2}$), and the winning sets are the consecutive 
cells (horizontal, vertical or diagonal) of length $k$.   
If one of the players gets a length $k$ line, then he wins otherwise the game is 
draw. Note the assuming perfect play, the winner is always the first player, or it
is a draw, by the strategy stealing argument of John Nash, \cite{B-C-G}. 
More details about $k$-in-a-row games in \cite{PA4,PA2}.

The board of the classical (Maker-Maker) 5-in-a-row game is a graph paper or 
the $19 \times 19$ Go board\footnote{The Go-Moku rules differs from the 5-in-a-row,
see e.\ g. \cite{B-C-G}.}, and the players' goal is to get five
squares in a row vertically, horizontally or diagonally first.

It is easy to see that the first player wins if $k \leq 4$, and a delicate case 
study by Allis \cite {Allis} shows that the first player wins for $k=5$ on 
the $19 \times 19$ or even in the $15 \times 15$ board. While the case $k=5$ is 
still open on the infinite board, Allis' result implies that Maker wins for $k=5$ 
in the Maker-Breaker version.

The game is a {\em blocking draw}, i.\ e. Breaker wins the Maker-Breaker version
a if $k \geq 9$, proved first by Shannon and Pollak, and later even a pairing 
strategy was given, \cite{beckcpc05, B-C-G}. Finally a Breaker's win was published
by A. Brouwer under the pseudo name T.\ G. L. Zetters for $k=8$, \cite{Zetters}. 

Both the Maker-Maker and the Maker-Breaker versions of the $k$-in-a-row for 
$k=6, 7$ are open. These are wisely believed to be draws (Breaker's win) but, 
despite of the efforts spent on those, not much progress has been achieved.

\section{The Chooser-Picker 7-in-a-row game} \label{cut}

\begin{thm}\label{CP7}  
Picker wins the Chooser-Picker 7-in-a-row game on every A subset of $\mathbb{Z}^{2}$.
\end{thm}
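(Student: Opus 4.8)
The plan is to reduce the infinite game to a finite one, then exhibit an explicit Picker pairing-type strategy based on a periodic decomposition of the board $\mathbb{Z}^2$. By Lemma~\ref{monotonitas} it suffices to prove that Picker wins the Chooser-Picker $7$-in-a-row game on all of $\mathbb{Z}^2$ (or on any large enough finite subboard), since a win there restricts to every induced subhypergraph $(V\setminus X,\F(X))$; this is exactly the device of prepending a move in which Chooser names a finite set $X$. So I would fix attention on the hypergraph whose vertex set is $\mathbb{Z}^2$ and whose edges are all $7$-term horizontal, vertical, and diagonal segments.

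Next I would construct a \emph{partition of $\mathbb{Z}^2$ into blocks}, periodic under some sublattice of $\mathbb{Z}^2$, together with a pairing of the cells inside (or across) blocks, so that every $7$-in-a-row winning set is ``killed'' in the Chooser-Picker sense. Concretely, I want to tile the plane by a fundamental domain and 2-color / pair its cells so that every horizontal, vertical, and diagonal run of $7$ consecutive cells contains at least one pair $\{x,y\}$ lying entirely inside that run. Then Picker's strategy is: by Lemma~\ref{parositas}, whenever such a two-element winning set is still fully available, offer it; since Chooser can take only one of $x,y$, the other returns to Picker and that winning set is blocked forever. Because the blocks are finite and the pattern is periodic, on any finite board only finitely many pairs are ever relevant, and Picker simply plays them one after another (offering an arbitrary unrelated pair once all relevant pairs are exhausted, or invoking the odd/even endgame bookkeeping from the proof of Lemma~\ref{monotonitas}). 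The main work is therefore combinatorial geometry: finding a period (I would expect something on the order of a $6\times6$ up to an $8\times8$ fundamental domain, small enough that the finitely many residue classes of lines of length $7$ can be checked by hand or by a short computer search) such that every length-$7$ line meets some pair.

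The hard part will be the existence of such a periodic pairing: a naive ``domino'' pairing that handles horizontals and verticals will generically fail on one of the two diagonal directions, and conversely, so one must interleave the pairs cleverly — possibly using pairs that are not axis-aligned, or allowing a pair to serve a diagonal line in one place and a vertical line in another. I anticipate that a clean pattern does not quite cover \emph{all} length-$7$ lines with a single simple tiling, so the real argument (carried out in the Appendix, as the introduction promises) likely proceeds by a finite case analysis: list all congruence classes of $7$-segments modulo the chosen period, and for each exhibit the blocking pair; the periodicity then lifts this to the whole plane, and Lemma~\ref{monotonitas} pushes it down to every subset $A\subseteq\mathbb{Z}^2$. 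A secondary subtlety is the parity/endgame argument when the relevant finite board has an odd number of cells, but Lemma~\ref{monotonitas}'s proof already shows how to absorb the leftover vertex into Picker's last offered pair, so this is bookkeeping rather than a genuine obstacle.
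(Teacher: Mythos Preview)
Your reduction via Lemma~\ref{monotonitas} and the idea of a periodic decomposition of $\mathbb{Z}^2$ are both right, and in fact the paper does tile the plane (by $4\times 8$ rectangles) and plays an auxiliary Chooser--Picker game on each tile. The genuine gap is your insistence on a \emph{pairing} strategy. A pairing in your sense --- a fixed matching on the cells such that every length-$7$ line contains some matched pair --- is exactly the same combinatorial object as a Breaker pairing strategy in the Maker--Breaker game: if such a matching existed, Breaker (not just Picker) would win the $7$-in-a-row, and by an elementary argument. But pairing strategies for $k$-in-a-row are only known for $k\ge 9$; for $k=8$ Zetters' proof already has to go beyond pairing, and the $k=7$ Maker--Breaker game is open. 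So the ``hard part'' you flag is not merely hard but, on current knowledge, impossible: finding your periodic pairing would settle a famous open problem far stronger than Theorem~\ref{CP7}.

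What the paper does instead is exactly where Chooser--Picker is weaker than Maker--Breaker for the defender's opponent. On each $4\times 8$ tile it specifies a small family of auxiliary winning sets (chosen so that every $7$-line in the plane contains one of them in some tile), and then proves Picker wins that finite auxiliary Chooser--Picker game by an \emph{adaptive} case analysis of the game tree (Lemma~\ref{4x8}), not by a static pairing. Picker's power to choose \emph{which} two cells to offer, depending on Chooser's earlier choices, is essential: the paper's closing Remark reports that on the very same $4\times 8$ auxiliary hypergraph the Maker--Breaker game is a \emph{Maker} win, so no pairing can exist even locally. In short, replace ``find a periodic pairing'' by ``find a periodic tiling plus a local auxiliary hypergraph on each tile, and win that local C--P game by case analysis''; Lemmas~\ref{parositas} and~\ref{monotonitas} then serve to shorten the tree and to push the win down to every subset, as you correctly outlined.
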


Let us start with the strategy of the proof. 
By applying the remedy mentioned before Lemma~\ref{monotonitas}
at first Chooser determines the finite board $S$. We will consider a tiling 
of the entire plane, and play an auxiliary game on each tile (sub-hypergraph). 
It is easy to see, if Picker wins all of the sub-games, then Picker wins the 
game played on any $K$ board which is the union of disjoint tiles. 

Let $K$ be the union of those tiles which meet $S$. Since $S \subset K$, 
Lemma~\ref{monotonitas} gives that Picker also wins the game on $S$, too.

Now we need to show a suitable tiling and to define and analyze the auxiliary 
games. The tiling guarantees that if Picker wins on in each sub-games then Chooser 
cannot occupy any seven consecutive squares on $K$.

Each tile is a $4 \times 8$ sized rectangle and the winning sets, for the sake 
of better understanding, are drawn on the following four board:

\begin{figure}[ht]
\centering
\includegraphics[bb=0mm 0mm 208mm 296mm, width=100mm, height=40mm, viewport=3mm 4mm 200mm 290mm]{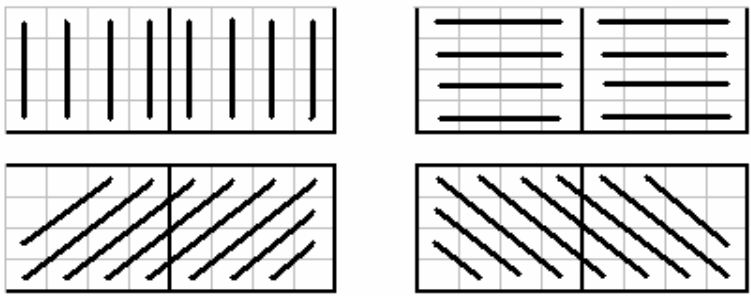}
\caption{These are the winning-sets of the $4 \times 8$ rectangle. Easy to see, that there is exactly 
one symmetry (along the double line). Later we will make use of it.}
\label{cp7_1}
\end{figure}

\begin{figure}[ht]
\centering
\includegraphics[bb=0mm 0mm 208mm 296mm, width=100mm, height=50mm, viewport=3mm 4mm 205mm 292mm]{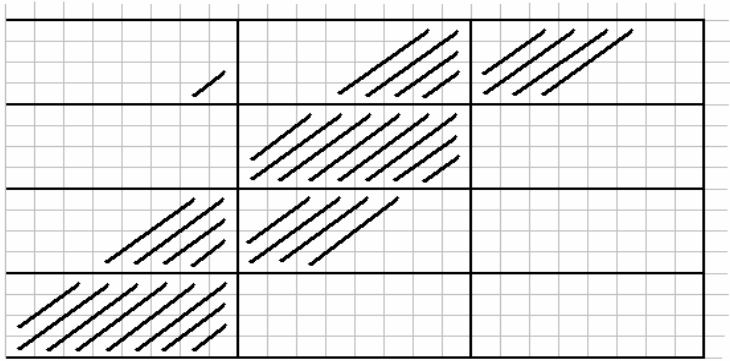}
\caption{We can see, how to draw from playing on simple tile, the game played on the infinite 
chessboard: neither vertically, nor horizontally, nor diagonally  (there is only one diagonal 
direction detailed) there are no seven consecutive squares without containing one winning set of a sub-game.}
\label{cp7_2} 
\end{figure}

The key lemma for our proof is the following.

\begin{lem}\label{4x8} 
Picker wins the auxiliary game defined on the $4 \times 8$ rectangle.
\end{lem}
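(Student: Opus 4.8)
\emph{Plan of proof.} Fix coordinates on the tile so that it occupies columns $1,\dots,8$ and rows $1,\dots,4$, with winning sets as depicted in Figure~\ref{cp7_1}; as noted there, the only non-trivial automorphism of this hypergraph is the single reflection along the marked double line. The tile has $32$ vertices, so the Chooser--Picker game on it lasts exactly $16$ rounds and ends with Chooser and Picker each owning $16$ cells, and Picker wins precisely when Chooser never comes to own an entire winning set. Two standing remarks drive the argument: once Picker owns a cell of a set $A$, that set is dead and may be deleted; and as soon as a still-live set has only two unclaimed cells, Lemma~\ref{parositas}, applied to the current position, forces Picker to offer exactly that pair. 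So the real task is to arrange Picker's earlier moves so that he is never confronted with two live winning sets that cannot both be neutralised by a single offer.

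I would use an adaptive pairing strategy. First, exhibit a partial matching of the $32$ cells such that every winning set of Figure~\ref{cp7_1} contains a matched pair \emph{except} for a short, explicitly listed family $\E$ of ``exceptional'' sets; these turn out to be the few diagonal sets that bridge the two $4\times 4$ halves of the tile, the horizontal and vertical sets lying inside one half or the other. Observe that such a matching covering \emph{all} the winning sets of a tile would, repeated on every tile, yield a Breaker pairing strategy on all of $\mathbb{Z}^{2}$ and hence settle the (open) Maker--Breaker $7$-in-a-row game; so we do not expect one to exist, and Picker's strategy must be genuinely adaptive. Picker then maintains a matching $M$ of the currently unclaimed cells together with the invariant that every live winning set either contains an $M$-pair or lies in $\E$ and still has at least three unclaimed cells. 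Each round Picker offers an $M$-pair, chosen to chip away at the most urgent exceptional set, and repairs $M$ after Chooser's choice; if instead some exceptional set has shrunk to two unclaimed cells he serves that pair (again legitimate by Lemma~\ref{parositas}). One verifies that the invariant can always be restored and that it rules out two non-neutralisable threats, whence Picker wins.

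To keep the verification finite I would (i) quotient by the reflection of Figure~\ref{cp7_1}, so that only Chooser's first move in the central diagonal region has to be examined up to symmetry, and (ii) run the two $4\times 4$ halves in parallel, their axis-parallel winning sets being disjoint from those of the other half, so that only the bridging diagonals of $\E$ couple the two sub-analyses. The remaining case split --- on which cell of each offered pair Chooser takes, and how $M$ is repaired --- is elementary but lengthy, and is the content of the Appendix.

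The genuine obstacle is showing that this case split closes, i.e.\ that $\E$ is thin enough. When Picker spends a move claiming a cell to kill a dangerous diagonal set, that cell also lies in some horizontal and some vertical winning set, and one must make sure this never creates a new axis-parallel threat that can no longer be covered by $M$; it is precisely the overlap pattern between the diagonals of $\E$ and the axis-parallel sets in Figure~\ref{cp7_1} that makes this tight. A weight-function bound of Erd\H{o}s--Selfridge type does not obviously help, since the winning sets are only of size up to $4$ and there are enough of them that the associated sum is not comfortably below the relevant threshold; the geometry of the tile, and in particular its single symmetry, has to be used, and pushing that bookkeeping through is the crux of the proof.
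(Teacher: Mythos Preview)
Your write-up is a plan, not a proof: the decisive step --- ``one verifies that the invariant can always be restored'' --- is asserted, and you yourself flag it as ``the genuine obstacle''. Nothing in the text shows that the adaptive matching $M$ can be repaired after each of Chooser's choices while keeping every live exceptional set at size $\ge 3$; absent that verification there is no proof. This is not a minor bookkeeping omission: by the Remark following the lemma, Maker \emph{wins} the Maker--Breaker game on this very $4\times 8$ tile, so the hypergraph is tight enough that a near-pairing scheme could easily fail, and one cannot presume the case split closes.

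A second concrete gap is your decomposition into two $4\times 4$ halves with ``axis-parallel winning sets being disjoint from those of the other half''. The horizontal winning sets in Figure~\ref{cp7_1} live in a board of width $8$, and there is no reason they should respect a vertical cut at column $4$--$5$; you would need to exhibit the winning sets and check this, and your description of $\E$ (``the few diagonal sets that bridge the two halves, the horizontal and vertical sets lying inside one half or the other'') is too vague to tell whether you mean $\E$ to be small or large. Likewise ``size up to $4$'' for all winning sets is unverified.

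For comparison, the paper's argument is organised quite differently and is much more concrete. It opens by invoking Lemma~\ref{parositas} on the two $2$-element winning sets already present at the start, which forces Picker's first two offers and splits the analysis into case~A (Chooser takes the upper square on at least one side) and case~B (lower on both). In case~A Picker fills the \emph{left} half completely first; the resulting left-half positions are then reduced, via the ``more dangerous'' partial order $T>T'$ derived from Lemma~\ref{monotonitas}, to just seven representative classes, and for each of these the right half is finished off (with terminal positions certified by explicit pairings). In case~B two further central offers reduce, by the single reflection symmetry, to three sub-cases. The heavy lifting is still a case analysis in the Appendix, but the branching is explicit ($7+3$ sub-cases), the first moves are specified, and the reductions (Lemma~\ref{parositas}, the danger order, symmetry) are named at the points where they are used.
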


Before starting the proof of Lemma~\ref{4x8}, let us estimate the actual complexity 
of the Maker-Breaker and Chooser-Picker versions of the auxiliary ``$4 \times 8$-game".
 
In the Maker-Breaker version Maker has $32$ possible moves, then Breaker has $31$,
so clearly the (unpruned) game-tree has size $32!$. Even worse, it may be hard to write
down convincing evidence of the outcome after searching this tree. In the Chooser-Picker
case, provided that Picker win, there is always a much shorter proof of the outcome.
Picker exhibits two squares and depending on Chooser move, only two smaller games 
have to be searched. This leads to a game-tree of size $2^{16}$, which is reasonable
to search. (Note that if Chooser wins a C-P game, the verification can be as hard
as the one in the M-B version.)

With some consideration the length of the case-study of the C-P version can be reduced.
One tool of this is a classification on the partially filled tables.
Let us denote the squares of a board $T$ taken by Chooser or Picker by $T_C$ and $T_P$,
respectively. From Picker's point of view the table $T$ is more dangerous than the table 
$T'$ $(T > T')$ if $T'_C \subset T_C $ and $T_P \subset T'_P$. 
Thus if Picker has a $p$ winning strategy on $T$, as a consequence of 
Lemma~\ref{monotonitas} playing the modified $p$ Picker also wins on $T'$.
See the application in \ref{sub}. 

An other gain is that Picker can ask an appearing two length winning set immediately by 
Lemma~\ref{parositas}. (In the defined $4 \times 8$ auxiliary game there are two such pairs at the beginning 
already, and some appears later.) 

Finally, we do not always have to go down to the leaves to the game-tree, since an
appropriate pairing strategy may prove Picker's win in an inner vertex of that tree. \\

\textbf{\underline{Our plan is for proving the  key lemma is:}}

\begin{enumerate}[I]
\item Sepatereing cases: A) and B) type cases.
\item Filling up one side of the auxiliary table useing breath first search.
\item After a case classification filling up the other side.
\end{enumerate}

\subsection{The proof of the key lemma}

The course of the proofe is: We take the 2 piece of 2 length winning set.
Picker picks them at the beginning (Picker can do this without any disadvantage 
thanks to the Lemma~\ref{monotonitas}). 
Depending on Chooser selection, there are two cases:

A) Chooser gets the upper square at least one side.

B) Both side Chooser gets the lower ones.

\begin{figure}[ht]
\centering
\includegraphics[bb=0mm 0mm 208mm 296mm, width=115.9mm, height=27.8mm, viewport=3mm 4mm 205mm 292mm]{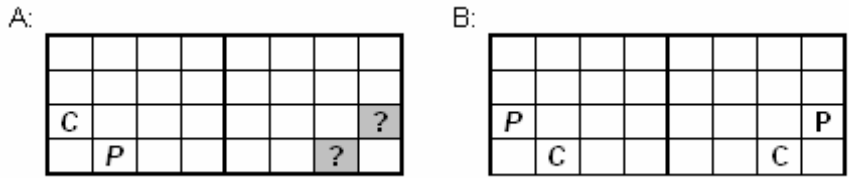}
\caption{The two cases: case \textbf{ A)}, and case \textbf{ B)}.}
\label{cp7_3} 
\end{figure}

From the characteristic of the game, the tree which describes the game is binary-tree,
we should walk breadth first on the cases; thus the cases with the same parent are beside 
each other. (After Picker's move, these are the two possible choices of Chooser). 
According to this, the positions are indexed lexicographically:
\textbf{\textit{A, B, a, b, i, ii, I, II}}\ldots

Somehow remarkable, that we the use breath first search to write down the proof, but to find the 
value of the game we always use deep first search algorithms (because we have to know the outcome 
of the game, before we are looking to the next branch of the tree).

\subsubsection{case A) }\label{sub}  

Without loss of generality, we may assume Chooser occupies the upper square on the left side 
(there might be the same on the right side).
Now Picker's strategy is to fill up the left side and leave the least possible crossing winning 
sets are left alive (see more detailed at \textbf{Index\_}\textbf{A}). On the pictures in the 
Appendix the special marks like =, *, +, etc. are the pairs to be asked by Picker. At that stage 
it makes no difference which squares are chosen by Chooser. And those marks also indicates the 
ending of a branch of the game-tree.

It is convenient to introduce to a new notation: It helps Chooser's game if we change one of 
Picker's square to a free square, and it is also advantageous for Chooser if he/she gets one 
of the free squares (P $\ll$ FREE $\ll$ C). It means that it is not necessary to prove a case 
if there exists a more dangerous one.   

From both Picker and Chooser point of view a square (be occupied or not) is 
\textit{uninteresting}\textit{,} if each winning set which contains it is ``dead''. It does not 
change the outcome of the game if we give these squares to Picker. 

Then after filling up the left side we can create equivalence classes using the relations and 
arrangements above. In this way one have to consider seven cases only:

\begin{figure}[ht]
\centering
\includegraphics[bb=0mm 0mm 208mm 296mm, width=116.7mm, height=108.7mm, viewport=3mm 4mm 205mm 292mm]{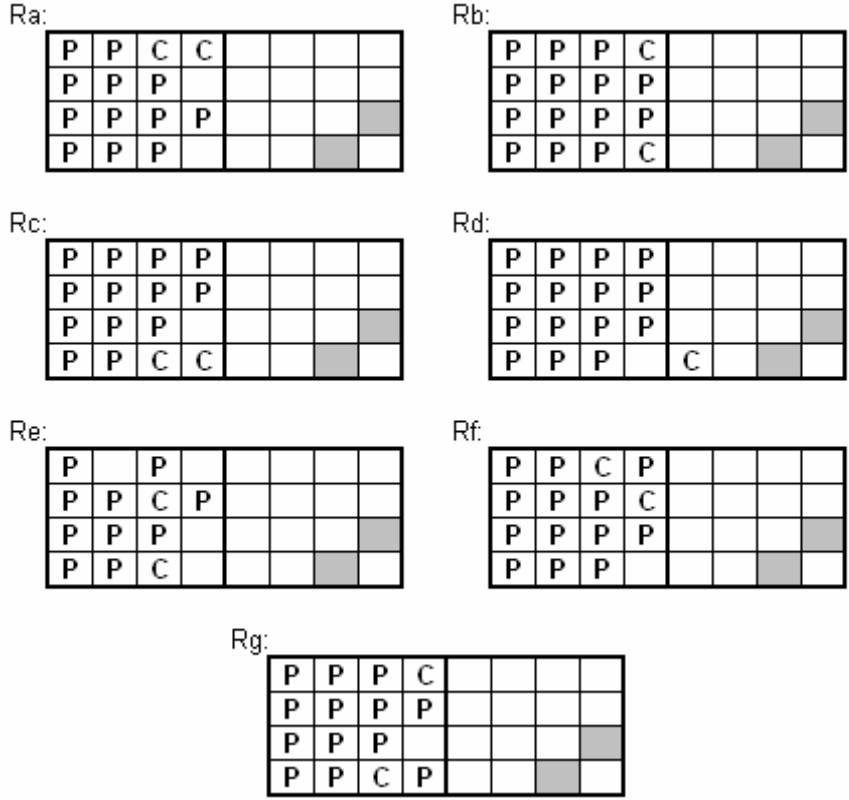}
\caption{If on the ``left side'' matching Chooser occupies the ``upper'' square, than Picker can 
achieve one of this stages (or an equivalent or less perilous position, using arrangement above).}
\label{cp7_4} 
\end{figure}

The finish of these positions above (=filling up the right side) can be found detailed in 
the Appendix of the paper (see Index$_{Ra}$, Index$_{Rb}$, ...Index$_{Rg}$.)

\subsubsection{case B) }

If case B) happens, then Picker asks the following two matching (see below), hence, using the symmetry,
it is enough to examine the following three cases.

\begin{figure}[ht]
\centering
\includegraphics[bb=0mm 0mm 208mm 296mm, width=116.9mm, height=63.3mm, viewport=3mm 4mm 205mm 292mm]{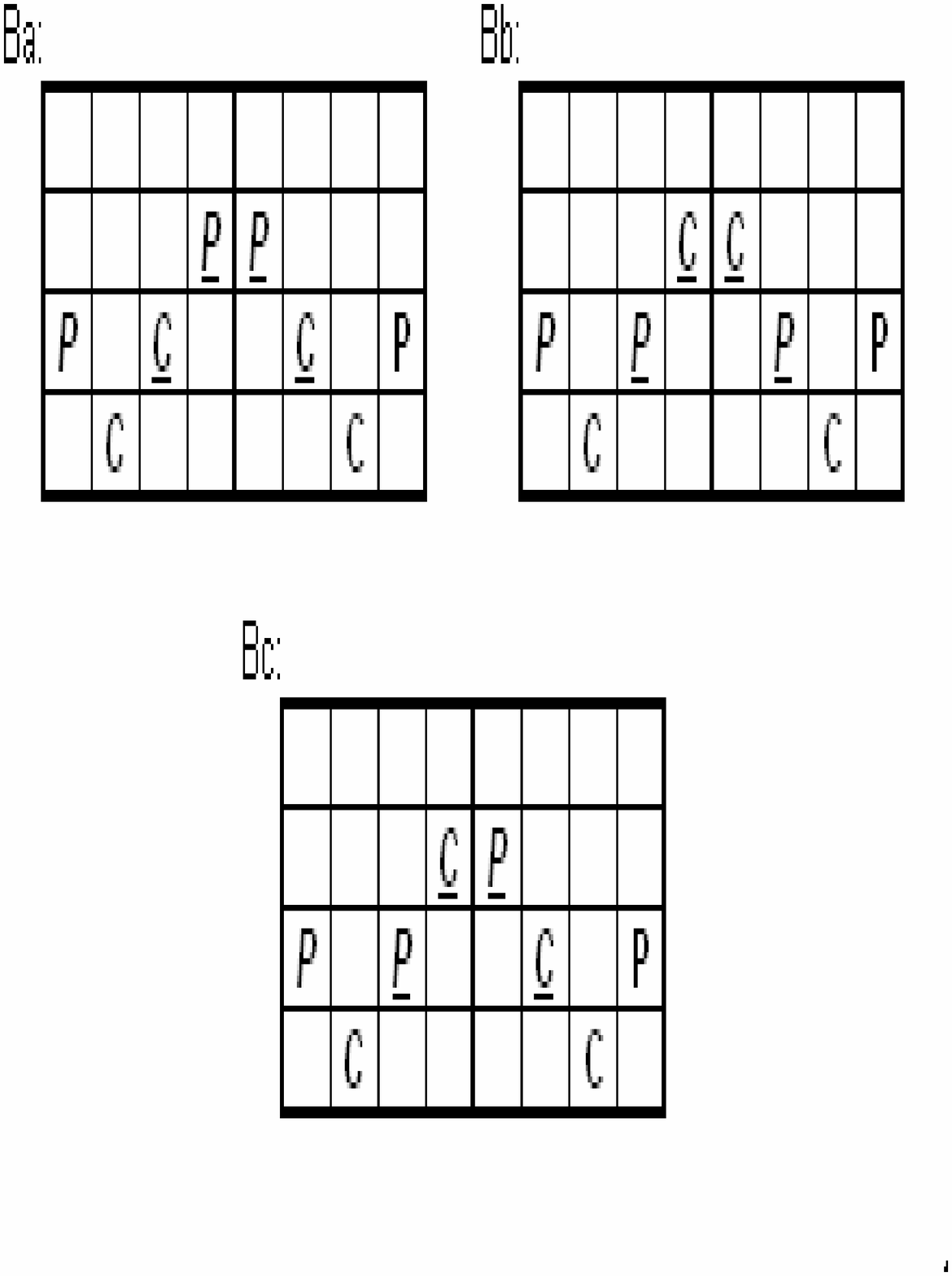}
\caption{If at the beginning Chooser takes the ``lower'' squares on both side, then Picker asks 
the two colored pair of squares in the middle. It gives rise to three cases.} 
\label{cp7_5} 
\end{figure}

The results of the three cases are also detailed at the end of the paper (see
\textbf{Index\_Ba, Index\_Bb, Index\_Bc}), that concludes the proof of Lemma~\ref{4x8} and
consequently Theorem~\ref{CP7}. \bizveg

\begin{remark}
We checked with brute force computer search the M-B game on the same auxiliary board, 
but it is a Maker win! So we cannot use the same table again, to prove that the weak 
version (=the Maker-Breaker version) of this game is a Breaker win. One is tempted to
look for other auxiliary games, which is not going to be easy. As a rule of thumb, it 
always good idea to check the C-P version of these games at first. 
\end{remark}

\end{document}


\begin{center}
\bf{\LARGE{Appendix} of the A. Csernenszky, 
The Chooser-Picker 7-in-a-row-game,  which appears in {\em Publicationes Mathematicae} (2010)}
\end{center}

\begin{center}

\includegraphics[bb=0mm 0mm 208mm 296mm, width=120mm, height=72mm, viewport=3mm 4mm 205mm 292mm]{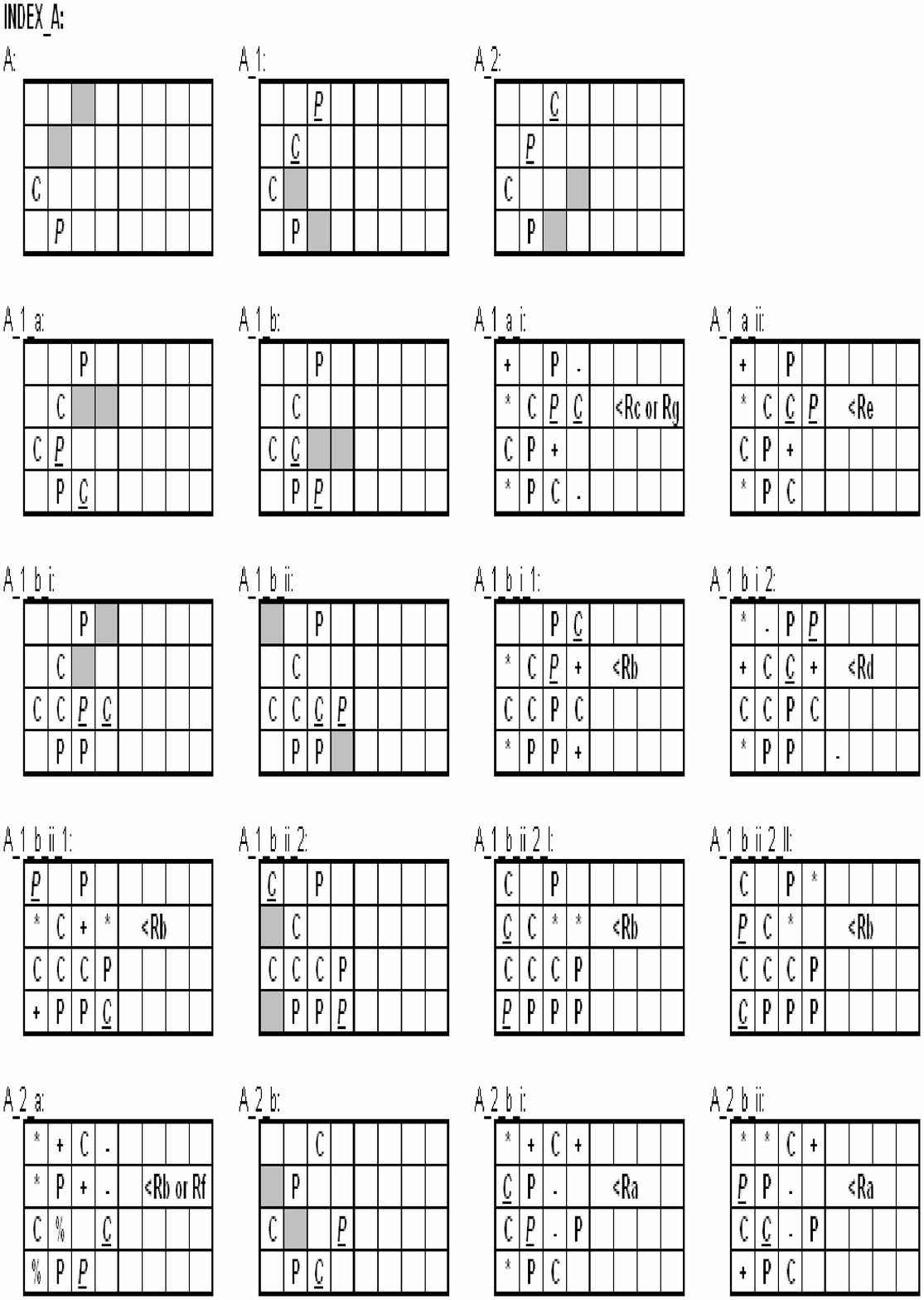}

\includegraphics[bb=0mm 0mm 208mm 296mm, width=120mm, height=22mm, viewport=3mm 4mm 205mm 292mm]{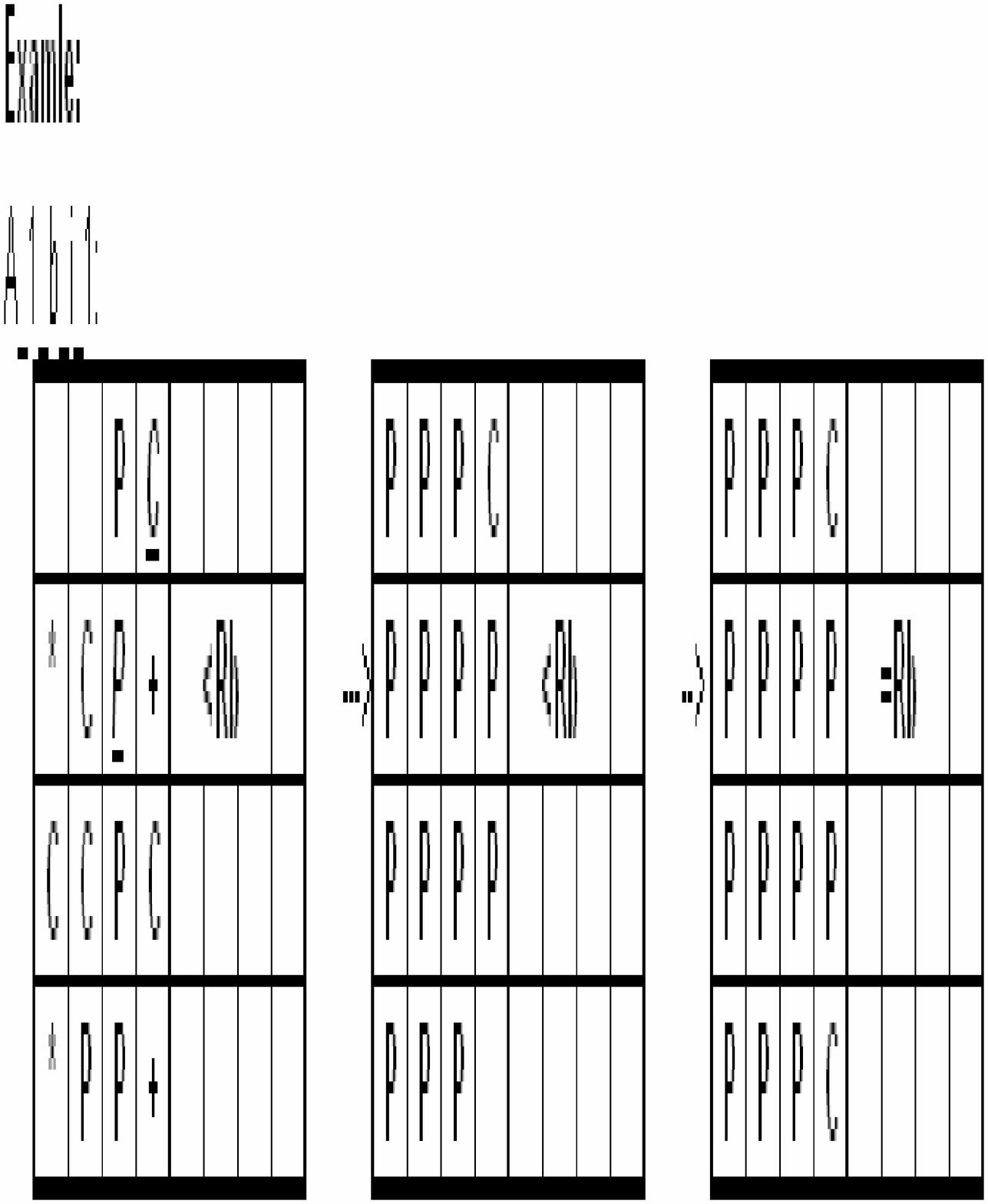}

\includegraphics[bb=0mm 0mm 208mm 296mm, width=120mm, height=20mm, viewport=3mm 4mm 205mm 292mm]{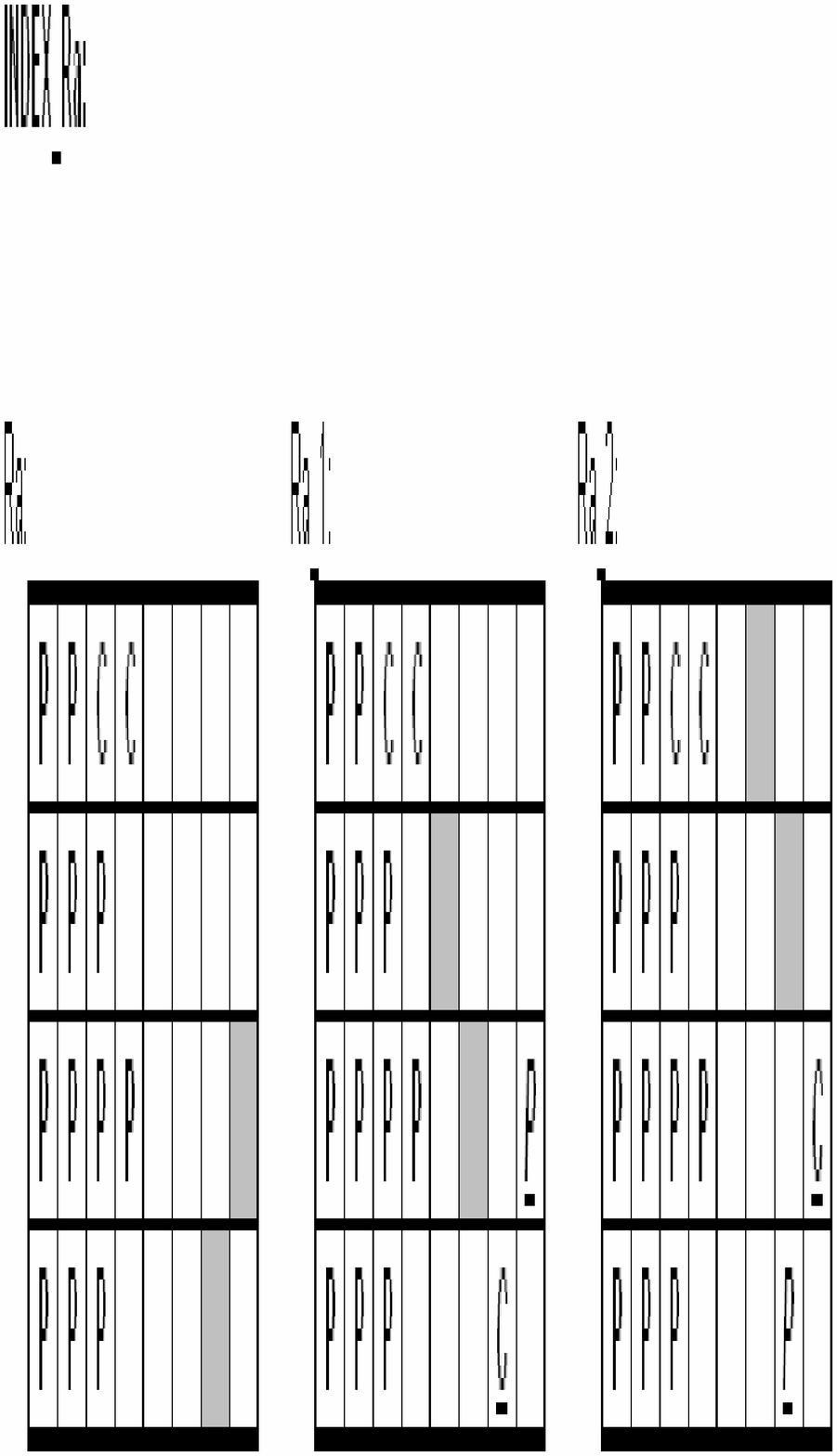}

\includegraphics[bb=0mm 0mm 208mm 296mm, width=120mm, height=16mm, viewport=3mm 4mm 205mm 292mm]{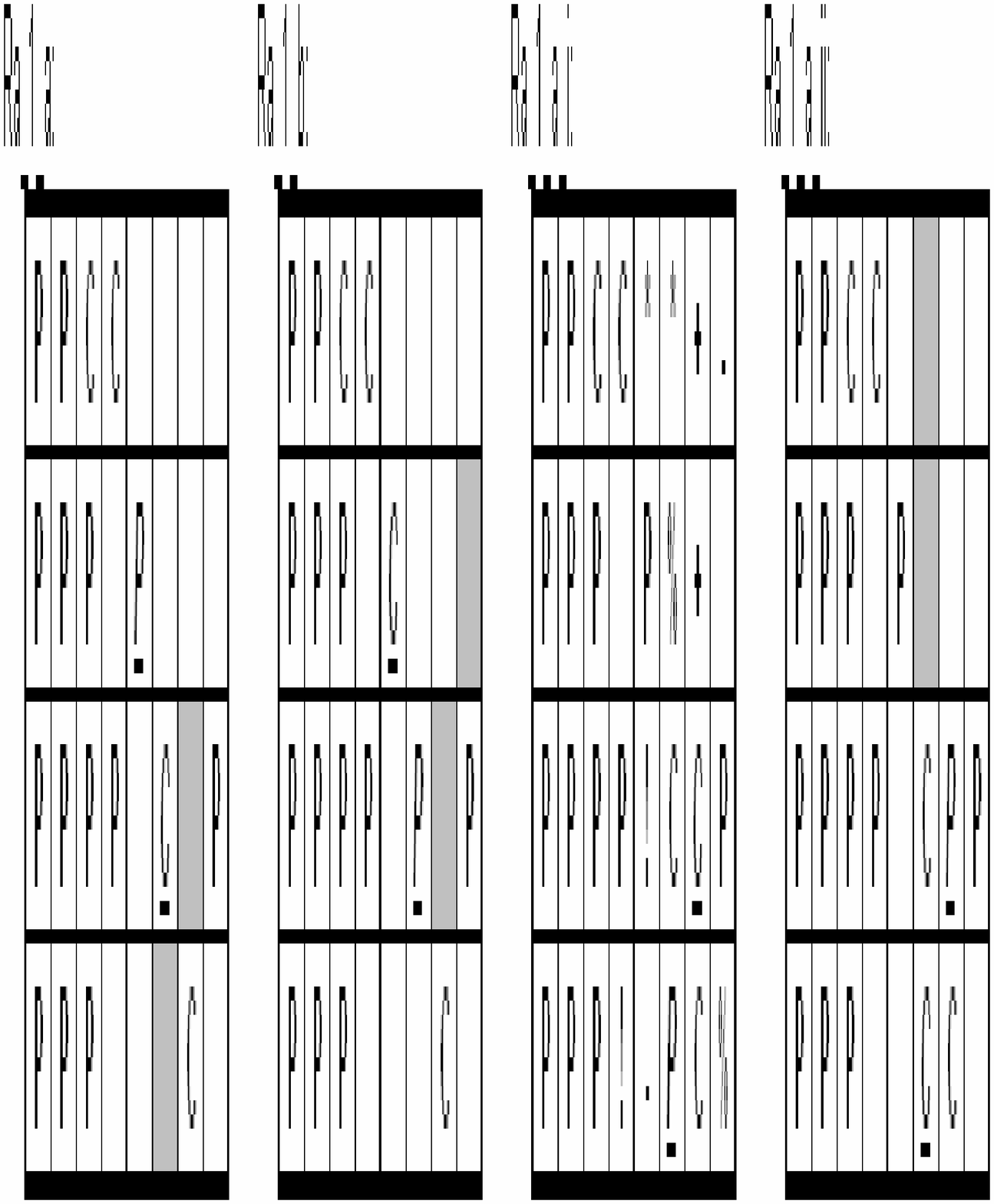}

\includegraphics[bb=0mm 0mm 208mm 296mm, width=120mm, height=16mm, viewport=3mm 4mm 205mm 292mm]{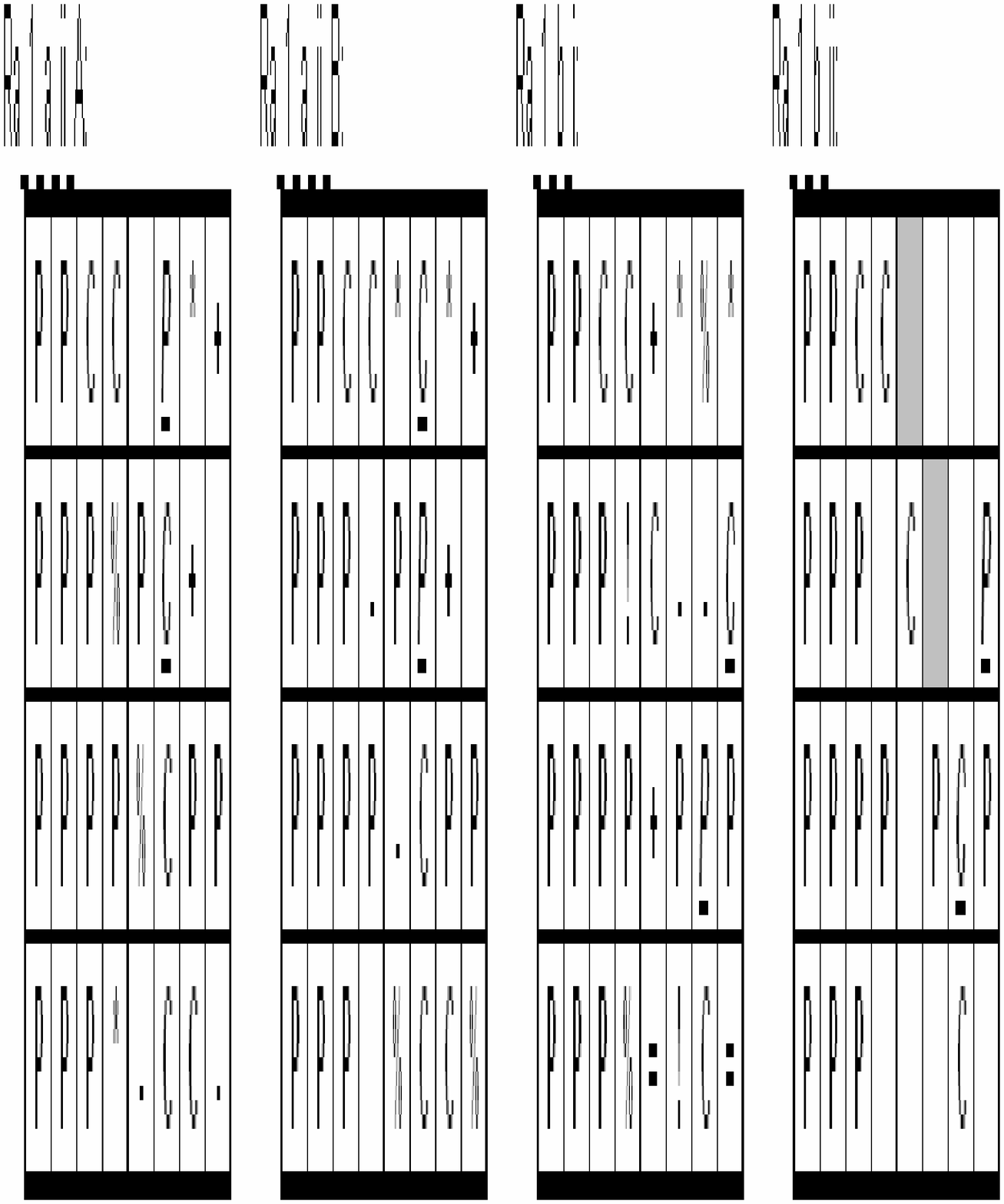}

\includegraphics[bb=0mm 0mm 208mm 296mm, width=120mm, height=16mm, viewport=3mm 4mm 205mm 292mm]{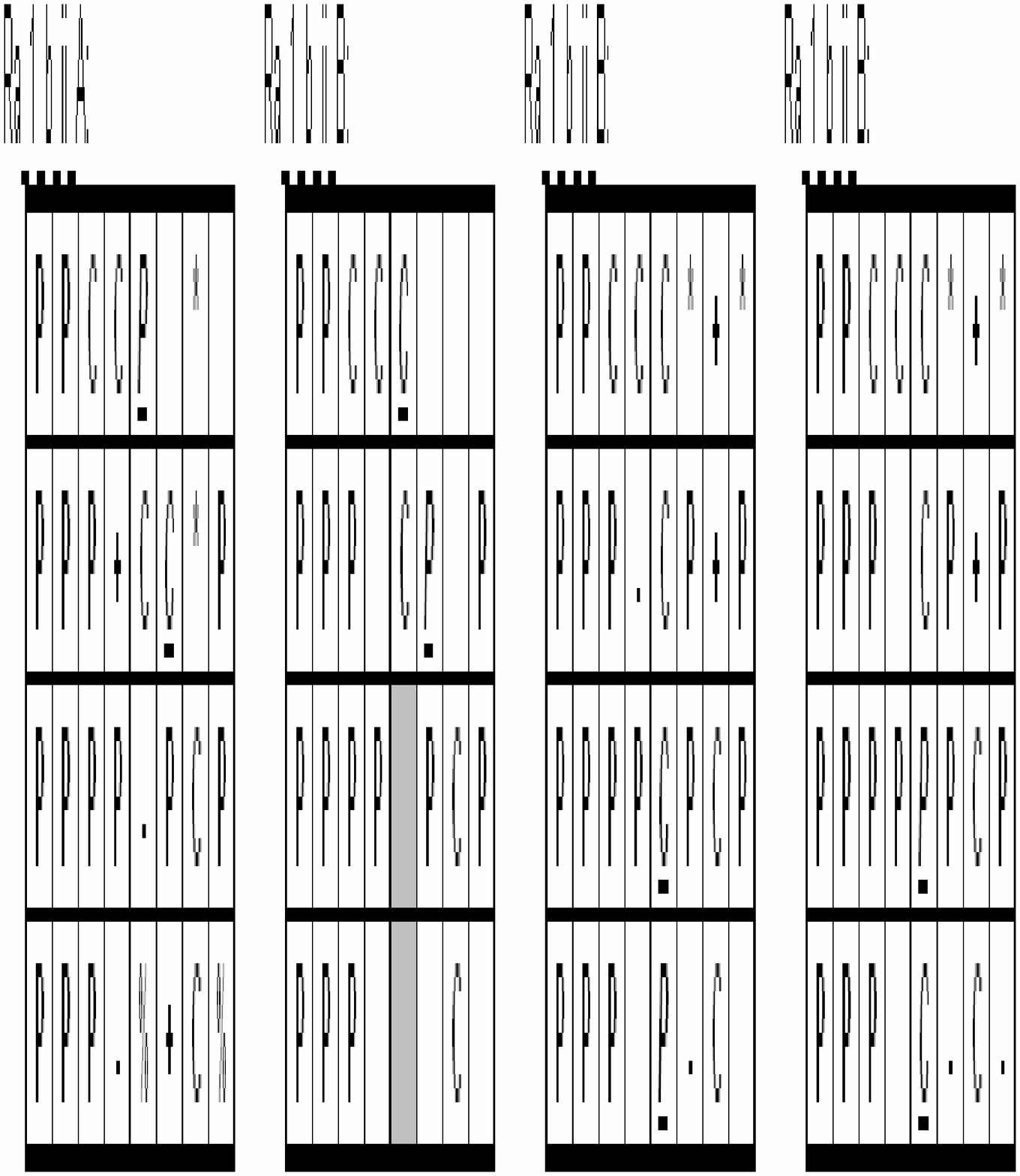}

\includegraphics[bb=0mm 0mm 208mm 296mm, width=120mm, height=16mm, viewport=3mm 4mm 205mm 292mm]{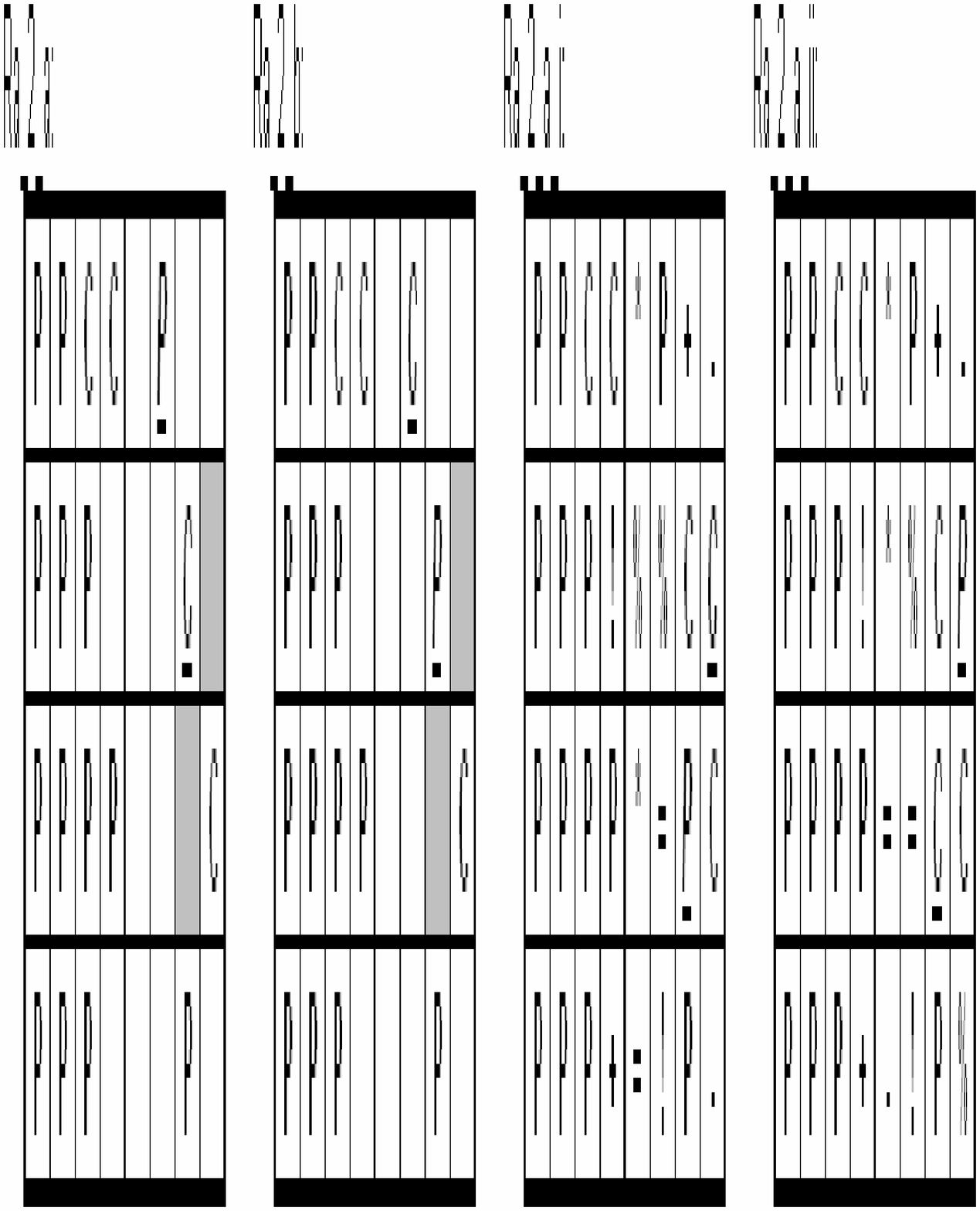}

\includegraphics[bb=0mm 0mm 208mm 296mm, width=120mm, height=16mm, viewport=3mm 4mm 205mm 292mm]{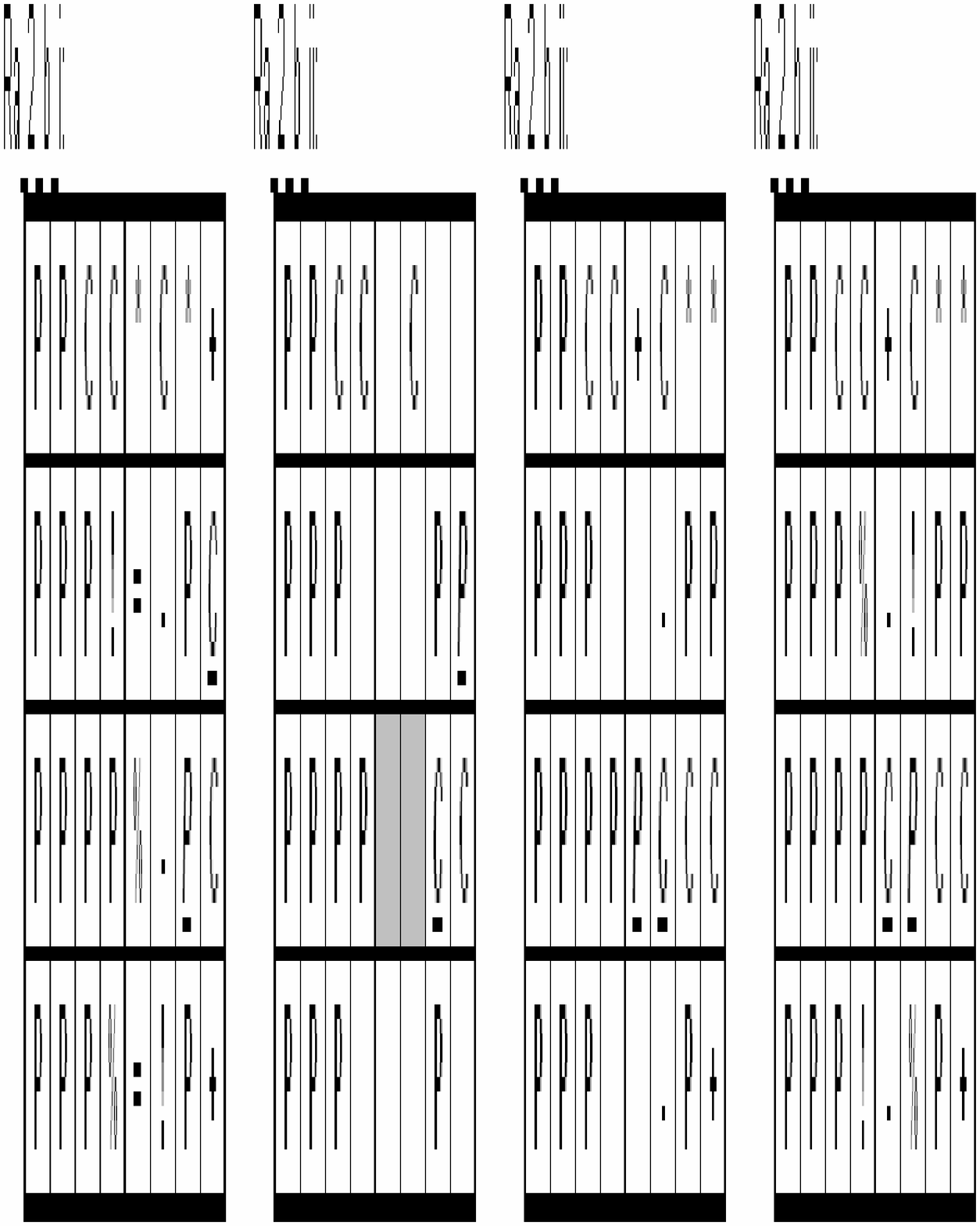}

\includegraphics[bb=0mm 0mm 208mm 296mm, width=120mm, height=20mm, viewport=3mm 4mm 205mm 292mm]{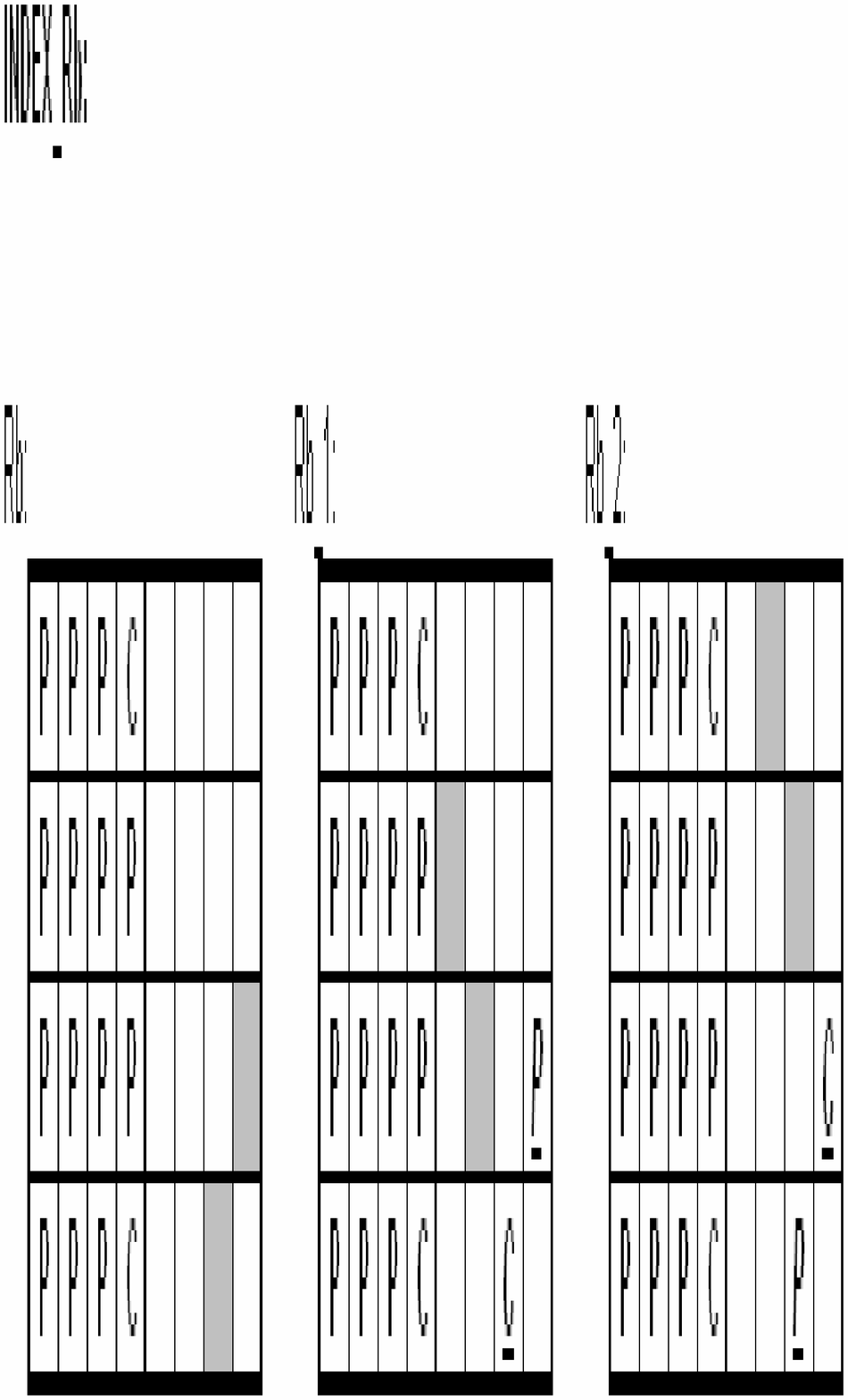}

\includegraphics[bb=0mm 0mm 208mm 296mm, width=120mm, height=15mm, viewport=3mm 4mm 205mm 292mm]{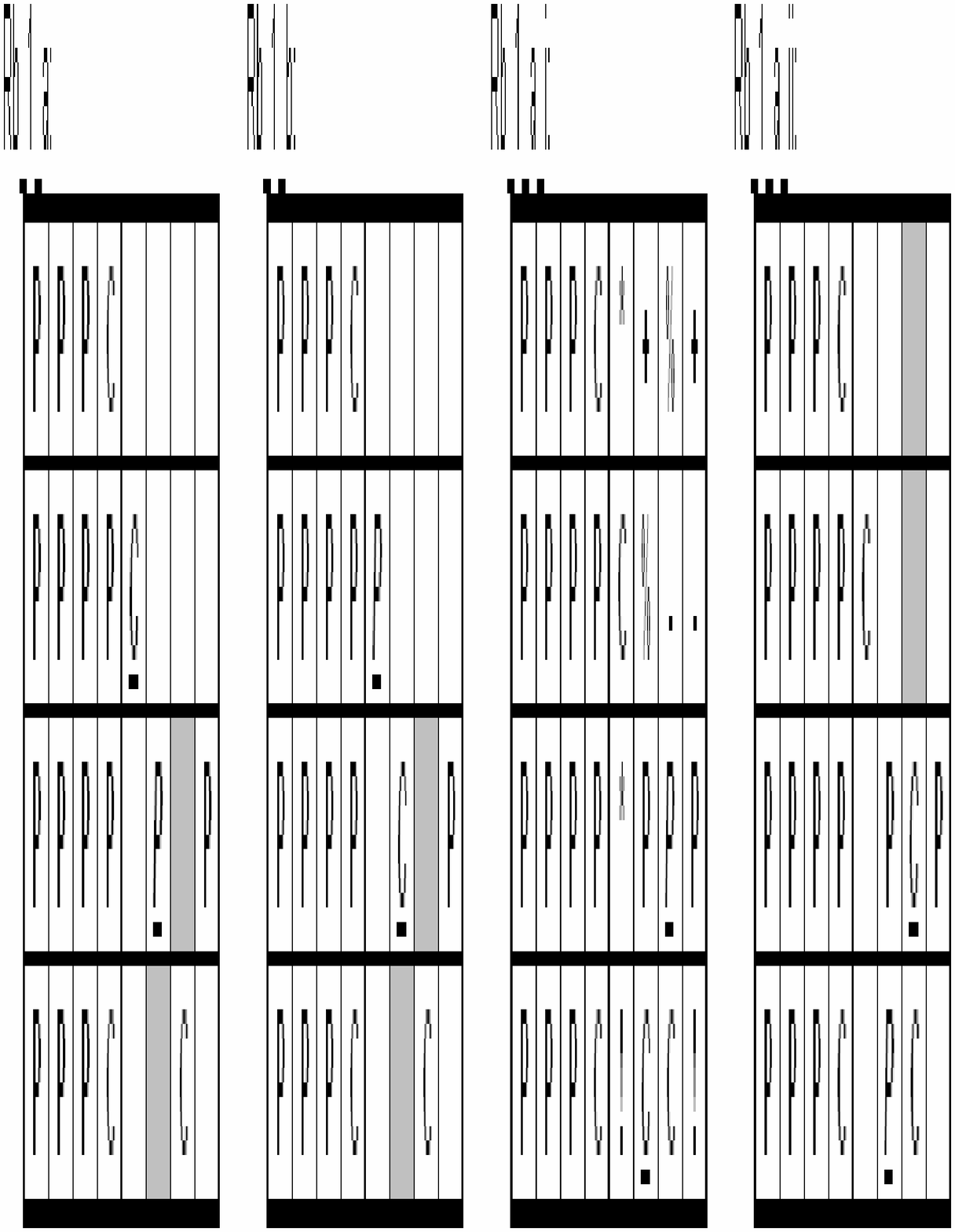}

\includegraphics[bb=0mm 0mm 208mm 296mm, width=120mm, height=16mm, viewport=3mm 4mm 205mm 292mm]{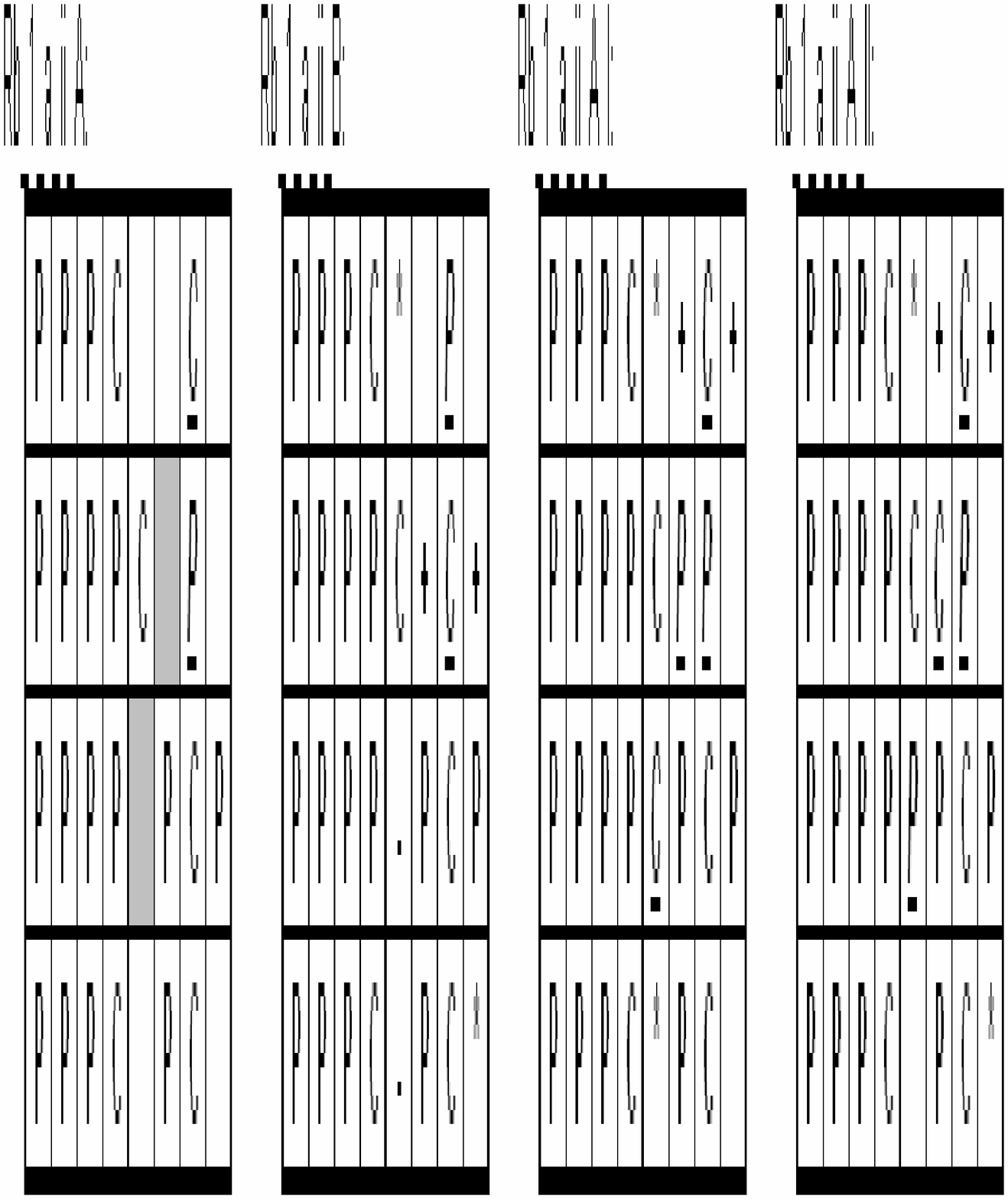}

\includegraphics[bb=0mm 0mm 208mm 296mm, width=120mm, height=16mm, viewport=3mm 4mm 205mm 292mm]{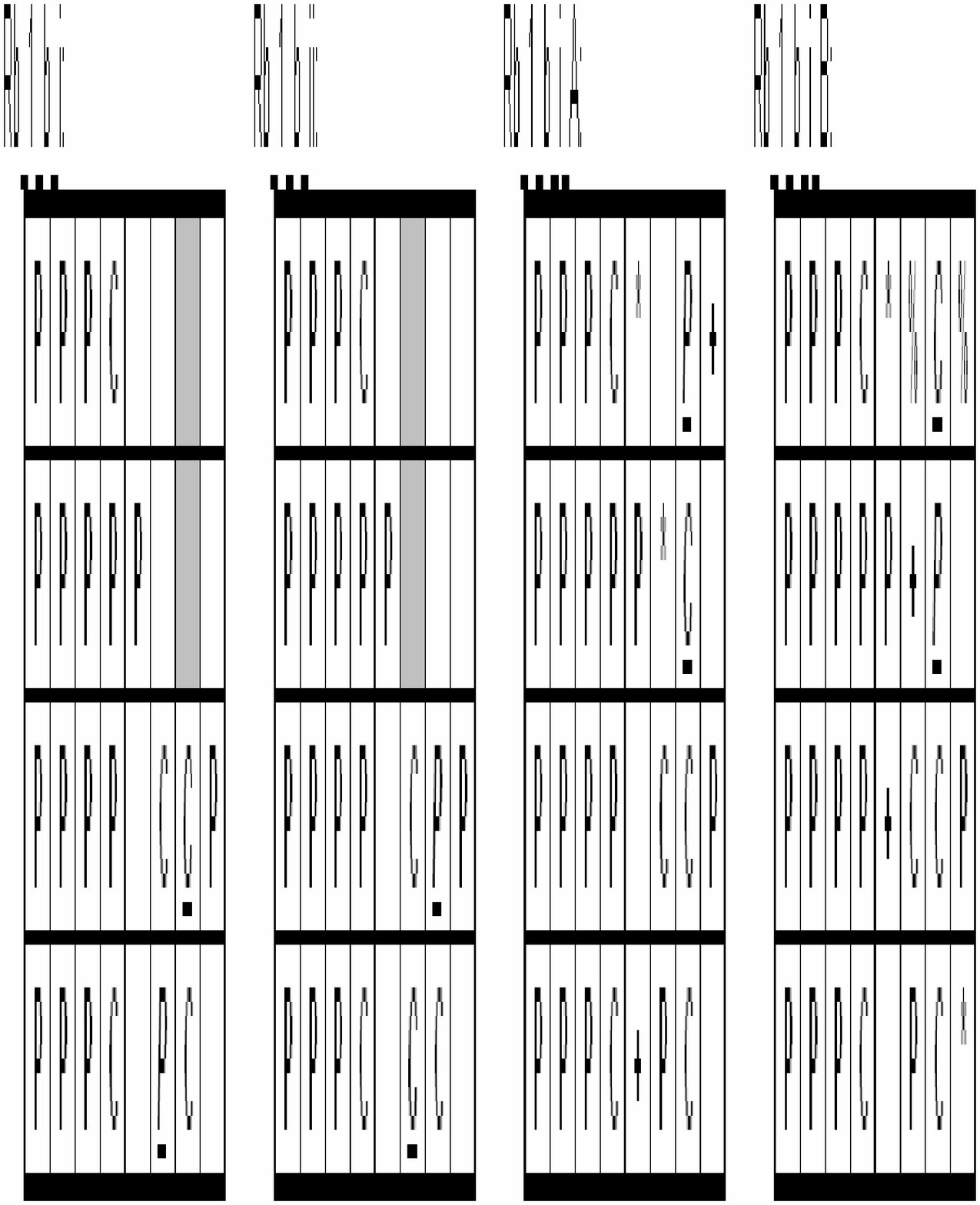}

\includegraphics[bb=0mm 0mm 208mm 296mm, width=120mm, height=16mm, viewport=3mm 4mm 205mm 292mm]{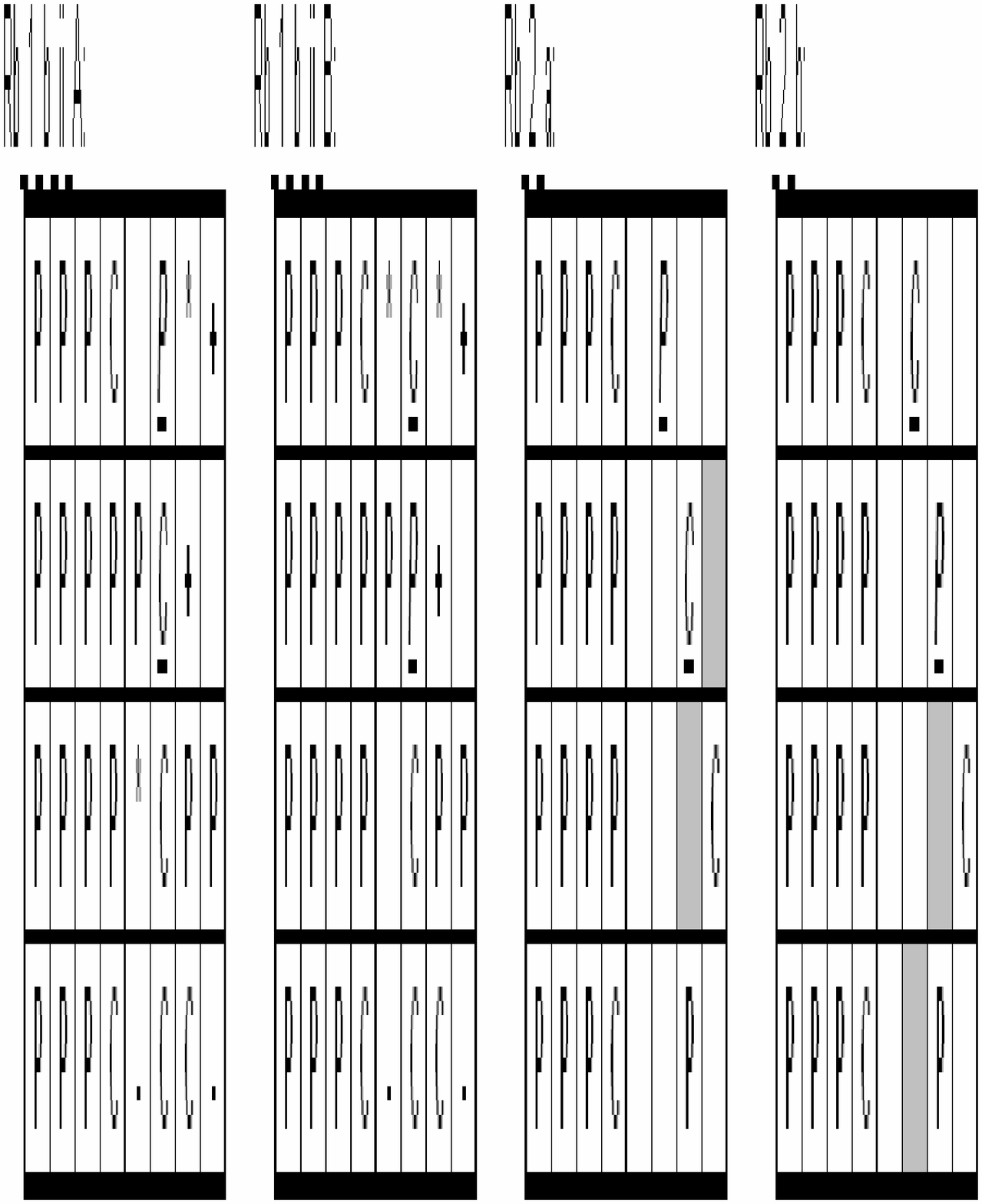}

\includegraphics[bb=0mm 0mm 208mm 296mm, width=120mm, height=16mm, viewport=3mm 4mm 205mm 292mm]{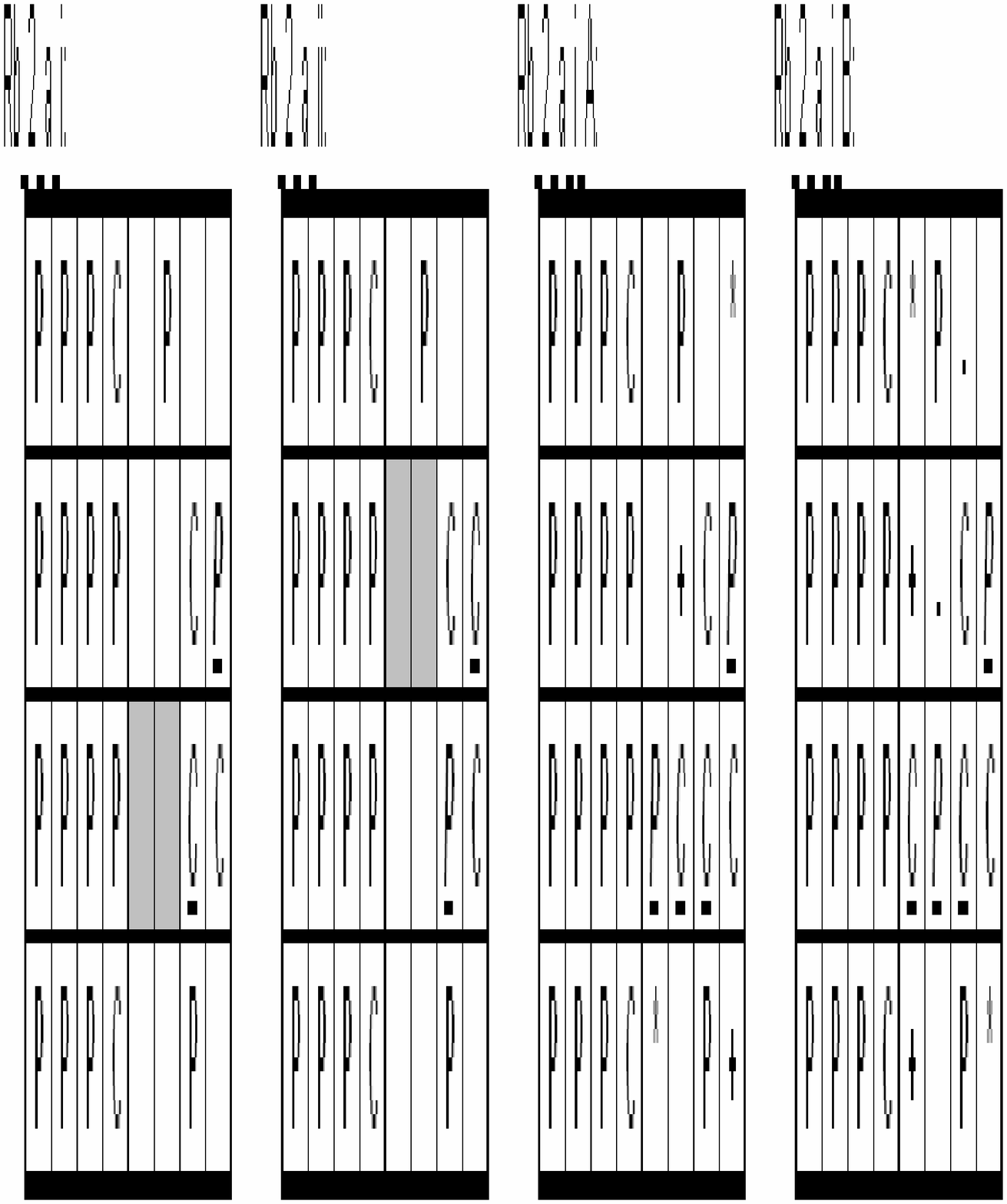}

\includegraphics[bb=0mm 0mm 208mm 296mm, width=120mm, height=15.5mm, viewport=3mm 4mm 205mm 292mm]{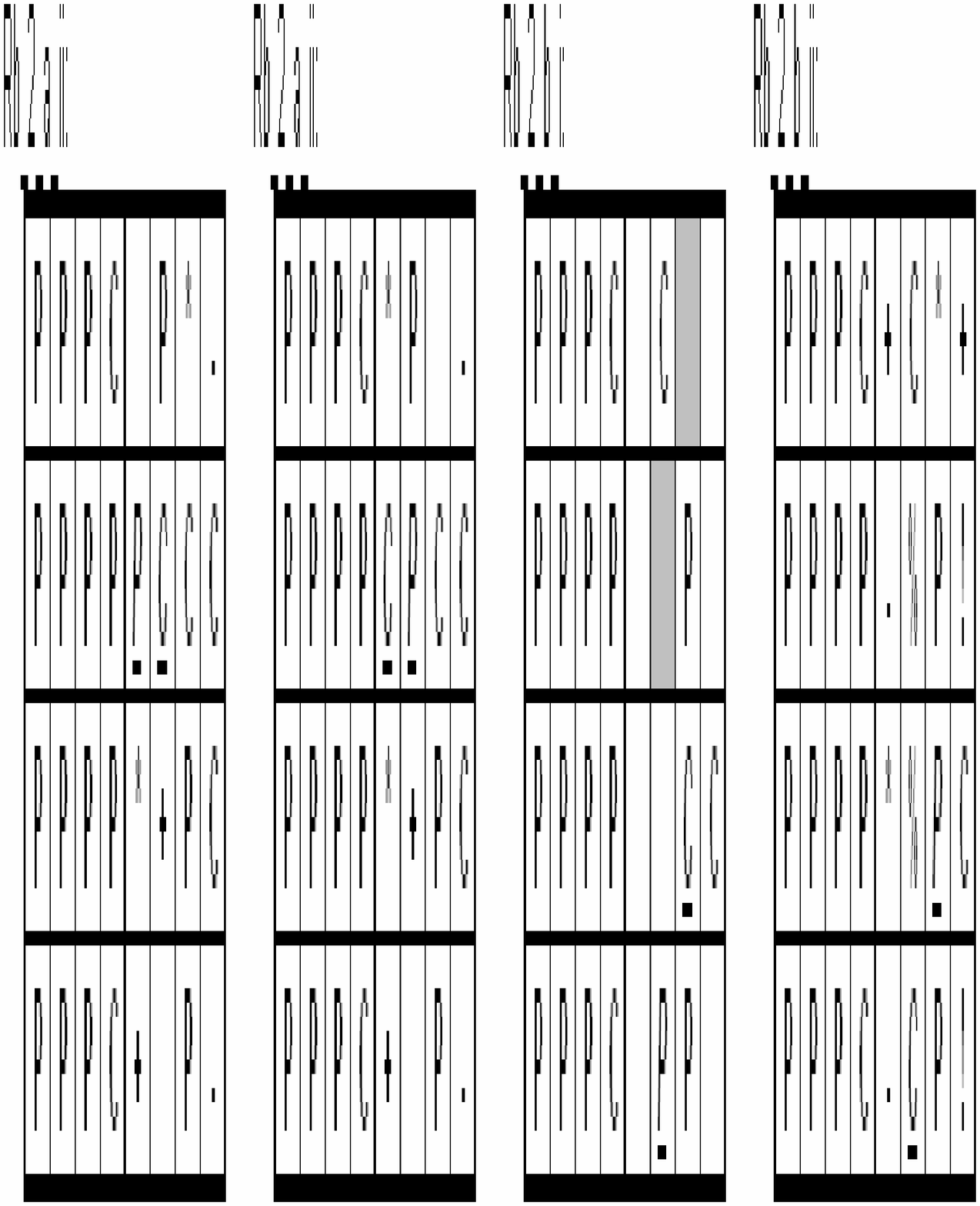}

\includegraphics[bb=0mm 0mm 208mm 296mm, width=120mm, height=16mm, viewport=3mm 4mm 205mm 292mm]{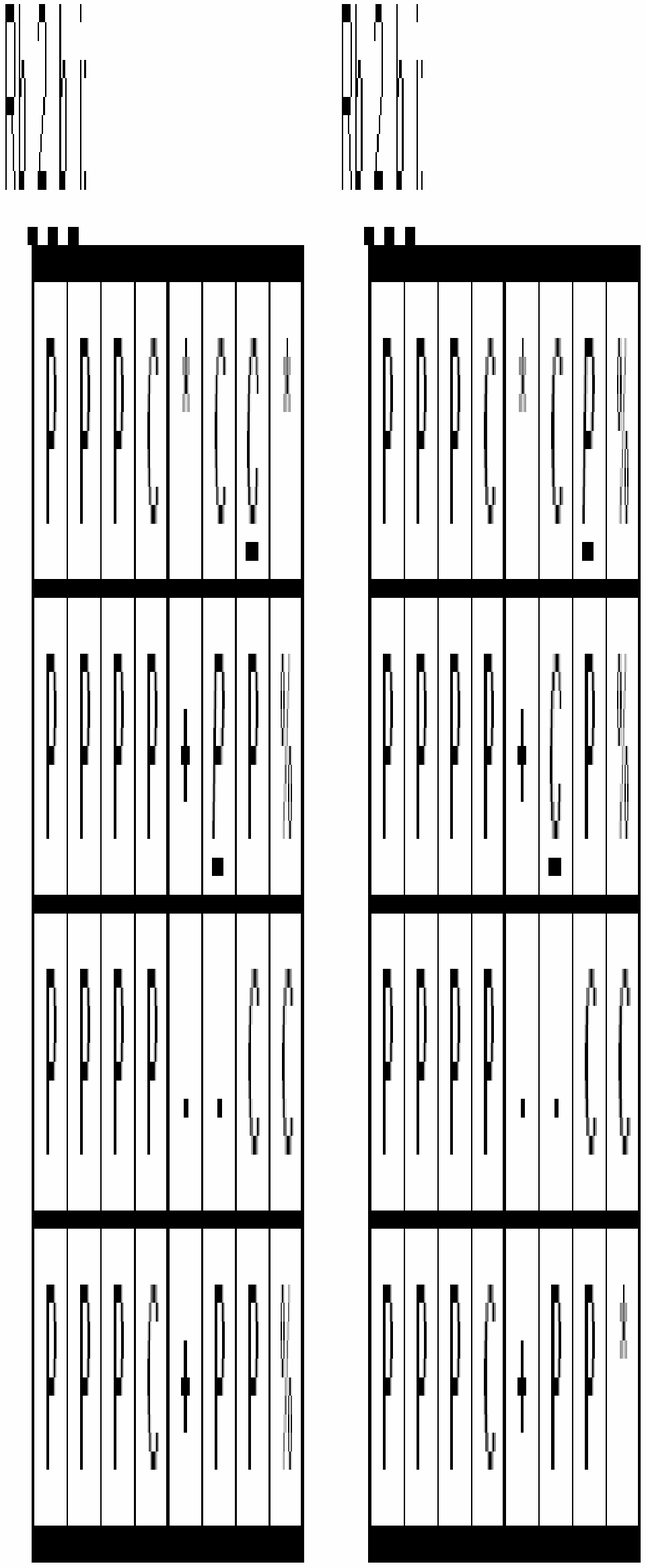}

\includegraphics[bb=0mm 0mm 208mm 296mm, width=120mm, height=19mm, viewport=3mm 4mm 205mm 292mm]{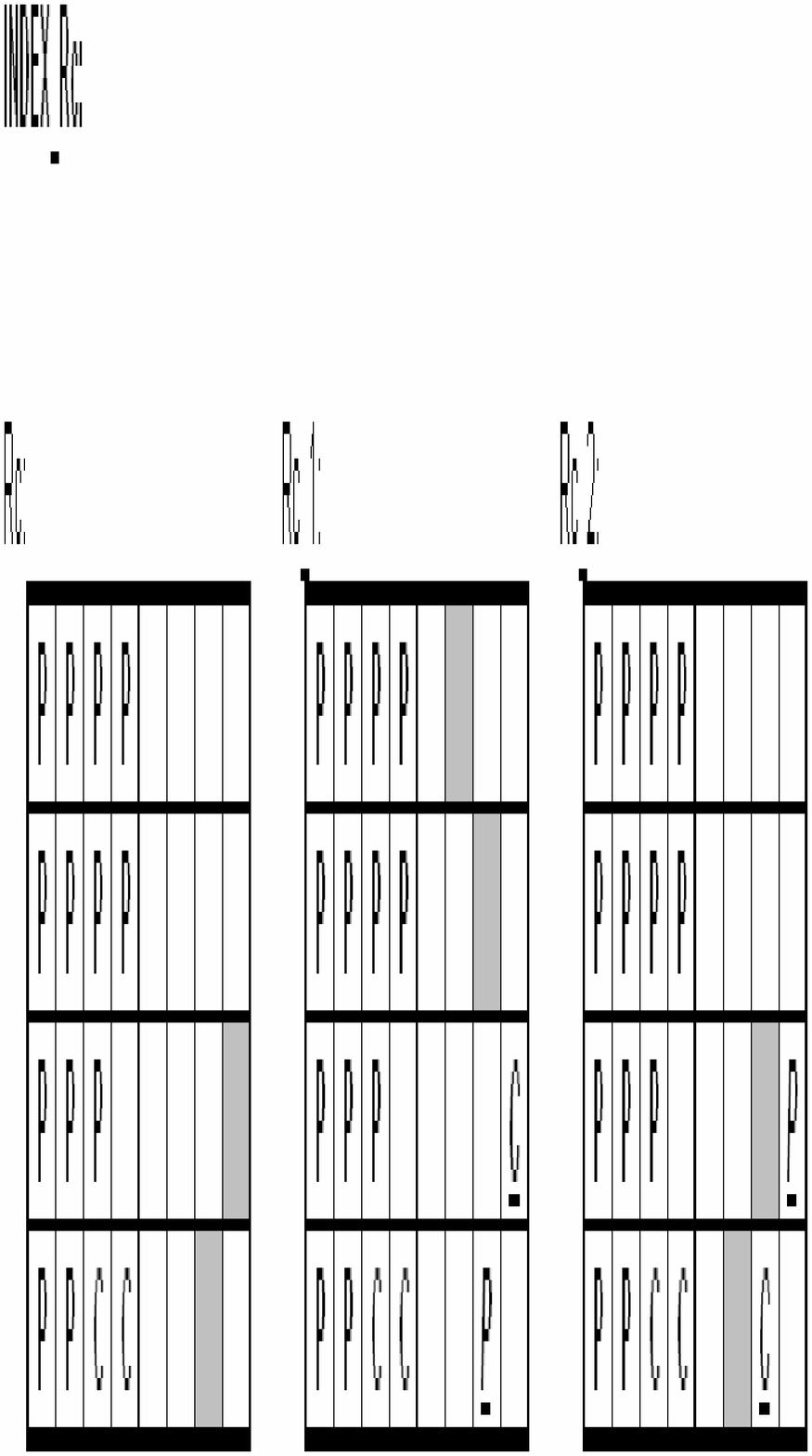}

\includegraphics[bb=0mm 0mm 208mm 296mm, width=120mm, height=16mm, viewport=3mm 4mm 205mm 292mm]{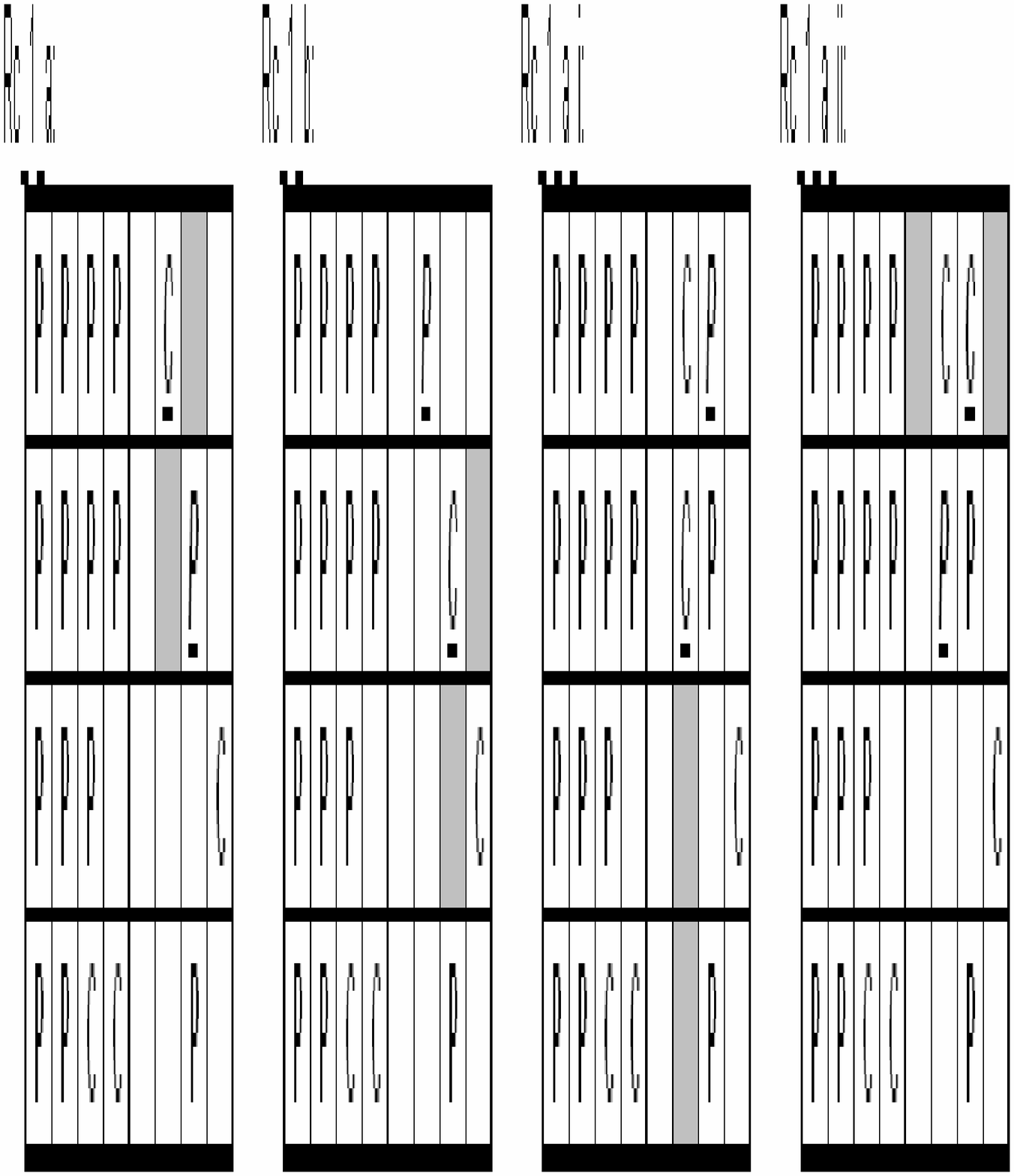}

\includegraphics[bb=0mm 0mm 208mm 296mm, width=120mm, height=15.5mm, viewport=3mm 4mm 205mm 292mm]{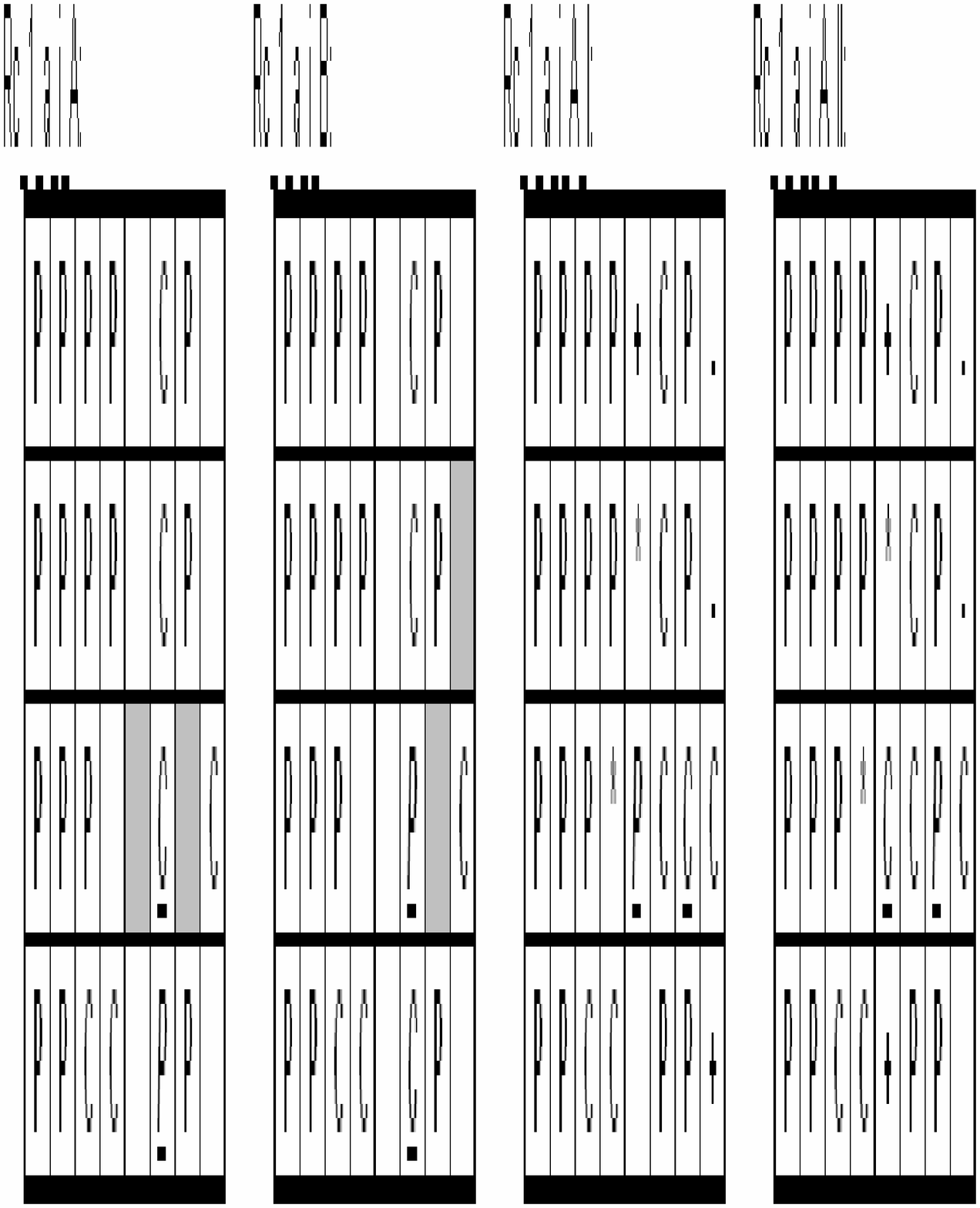}

\includegraphics[bb=0mm 0mm 208mm 296mm, width=120mm, height=14.8mm, viewport=3mm 4mm 205mm 292mm]{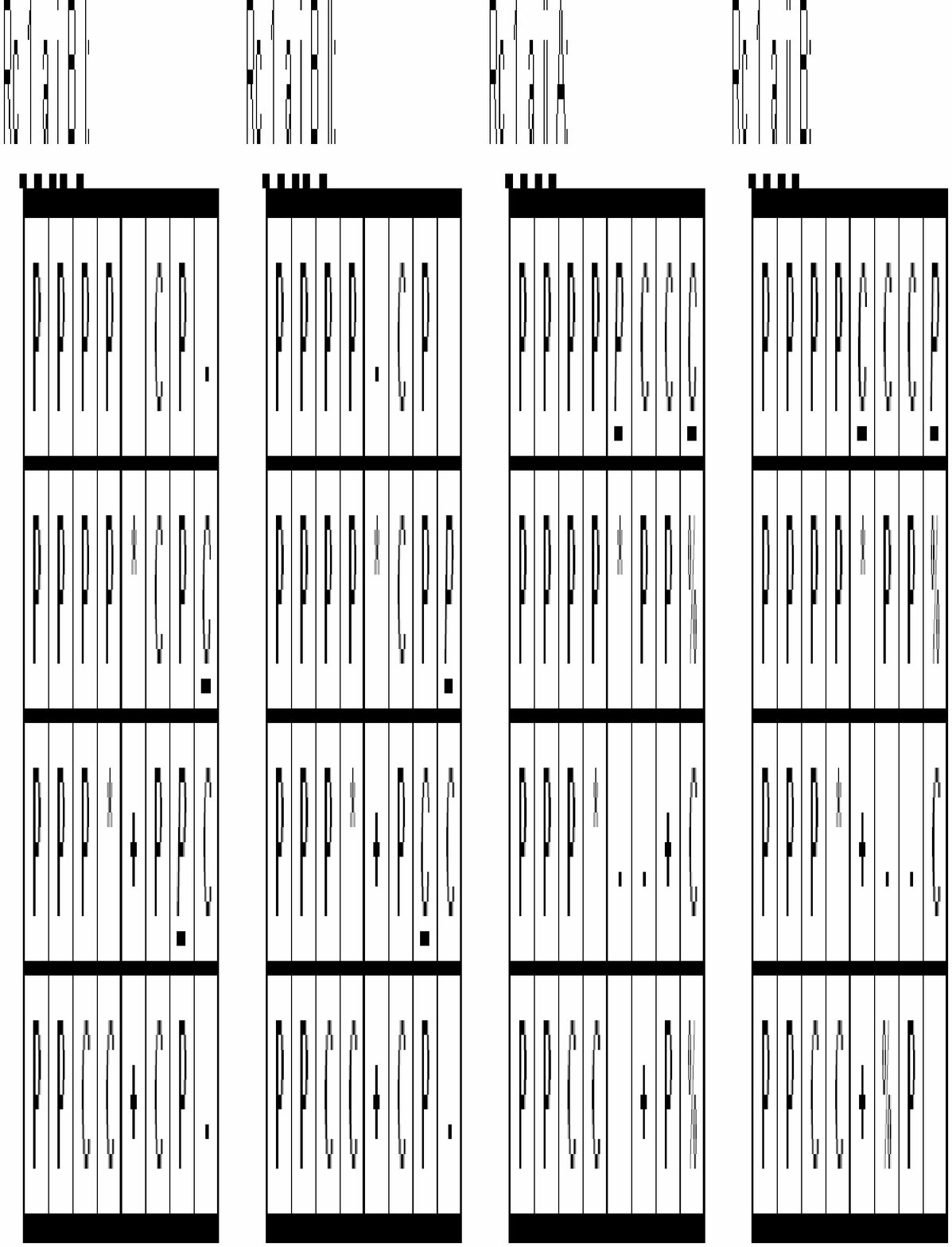}

\includegraphics[bb=0mm 0mm 208mm 296mm, width=120mm, height=16mm, viewport=3mm 4mm 205mm 292mm]{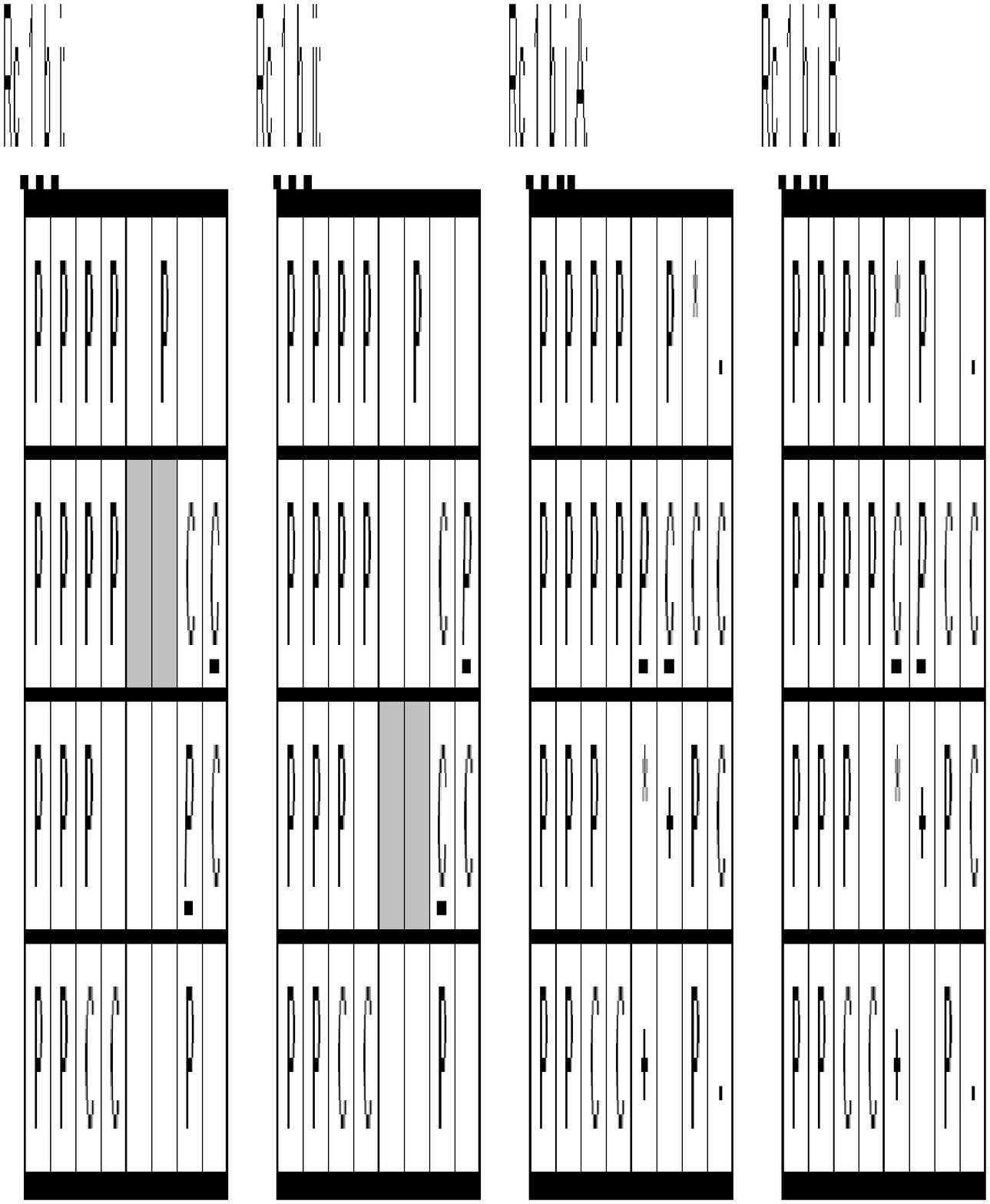}

\includegraphics[bb=0mm 0mm 208mm 296mm, width=120mm, height=15.5mm, viewport=3mm 4mm 205mm 292mm]{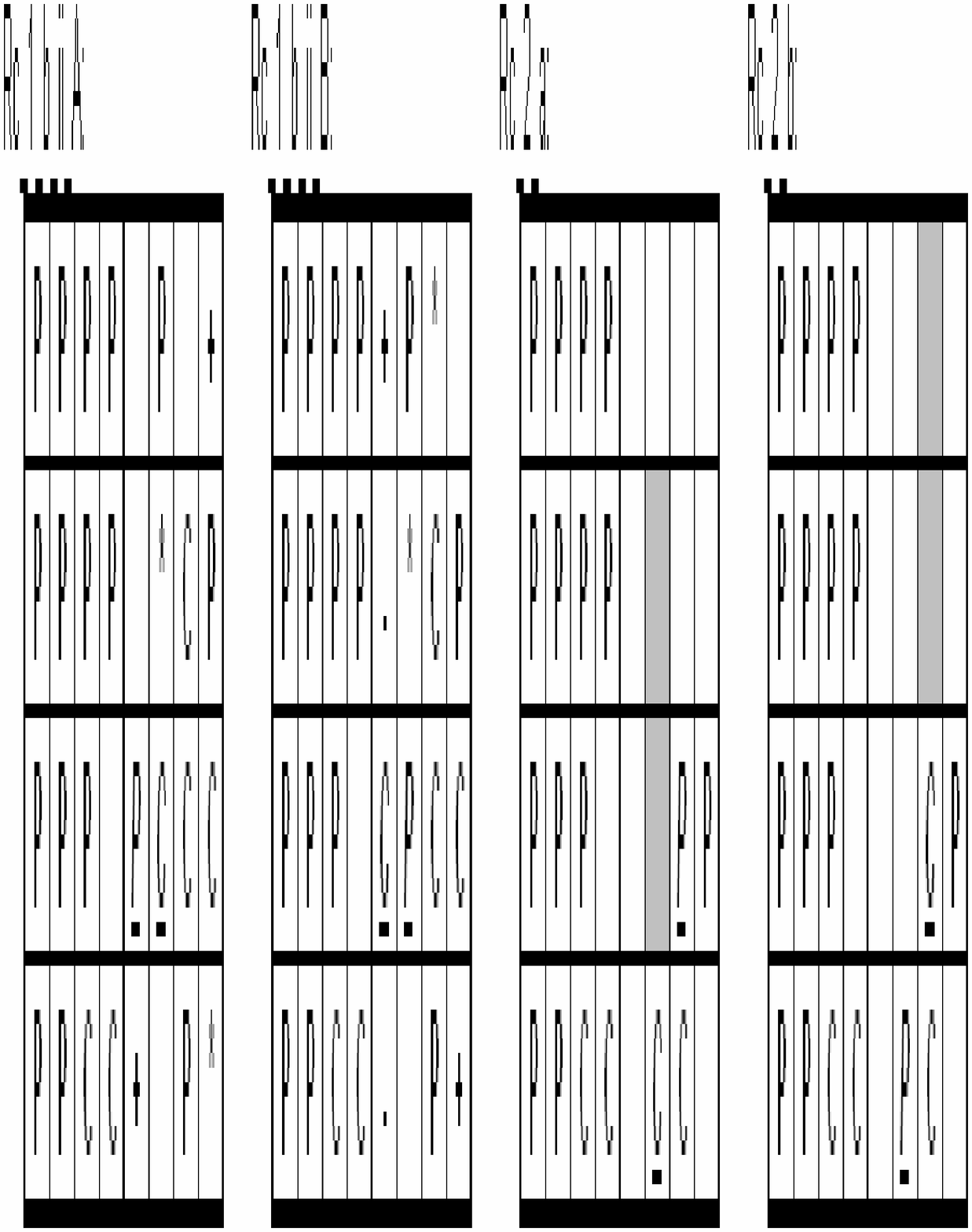}

\includegraphics[bb=0mm 0mm 208mm 296mm, width=120mm, height=16mm, viewport=3mm 4mm 205mm 292mm]{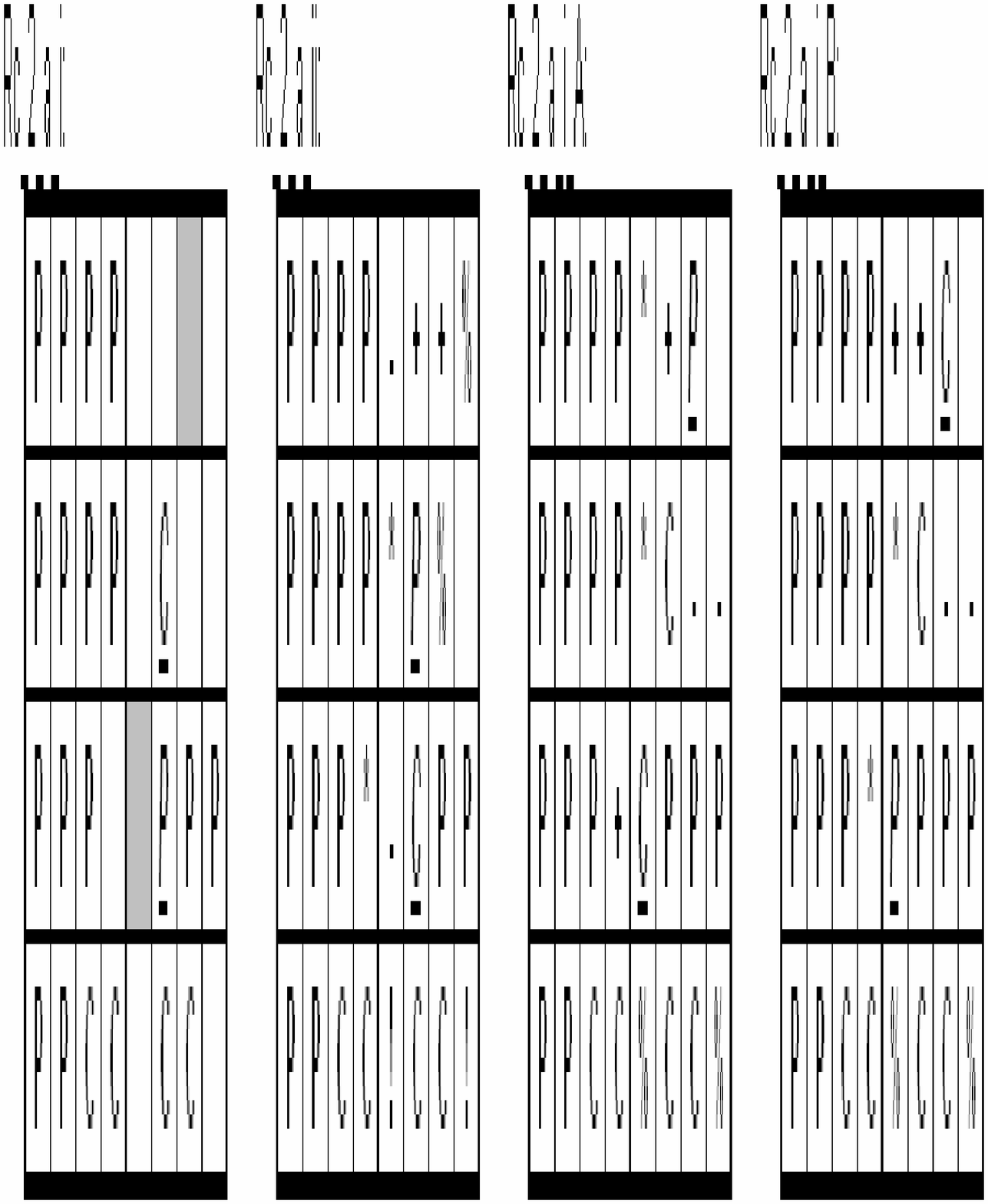}

\includegraphics[bb=0mm 0mm 208mm 296mm, width=120mm, height=15.5mm, viewport=3mm 4mm 205mm 292mm]{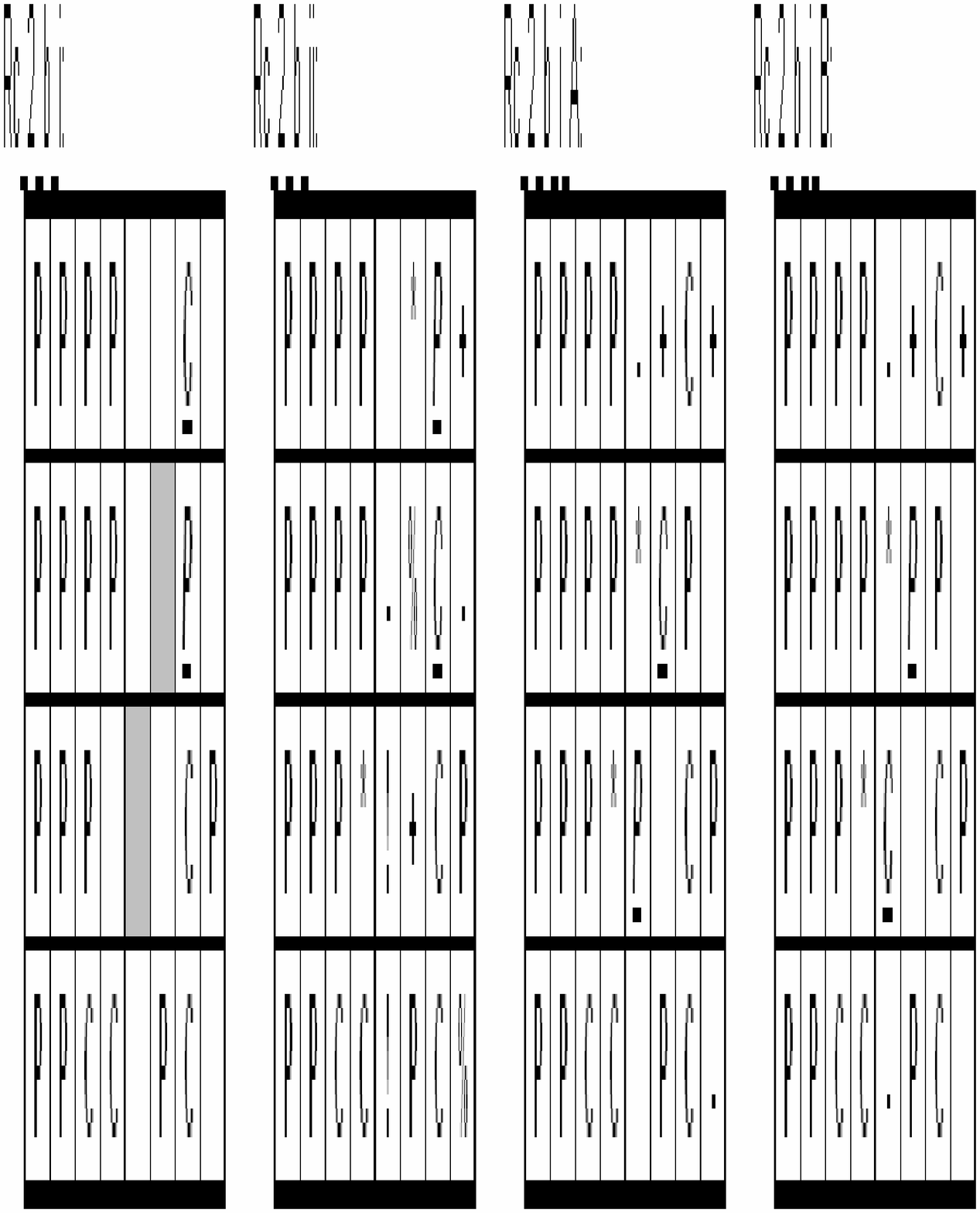}

\includegraphics[bb=0mm 0mm 208mm 296mm, width=120mm, height=19mm, viewport=3mm 4mm 205mm 292mm]{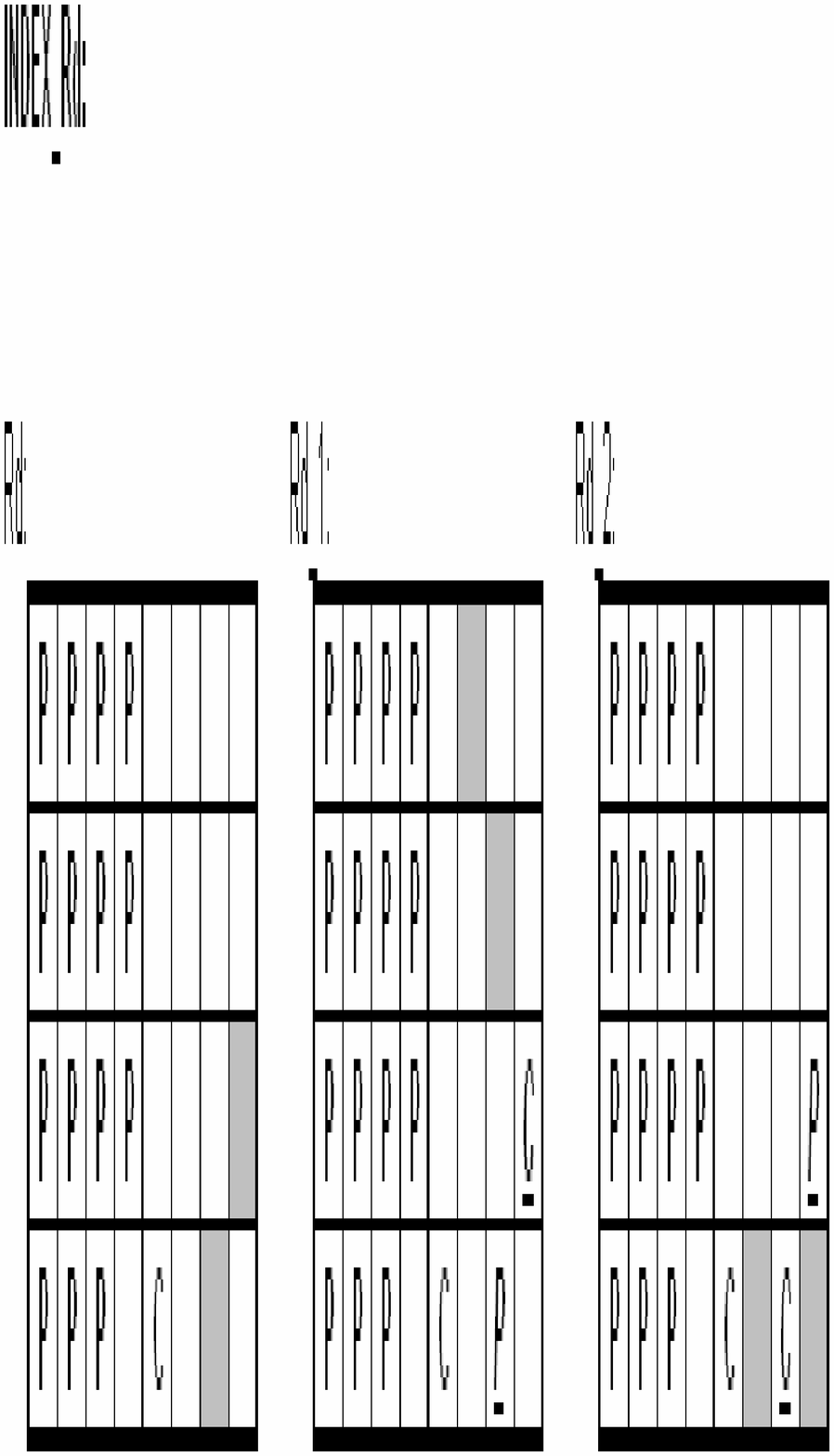}

\includegraphics[bb=0mm 0mm 208mm 296mm, width=120mm, height=16mm, viewport=3mm 4mm 205mm 292mm]{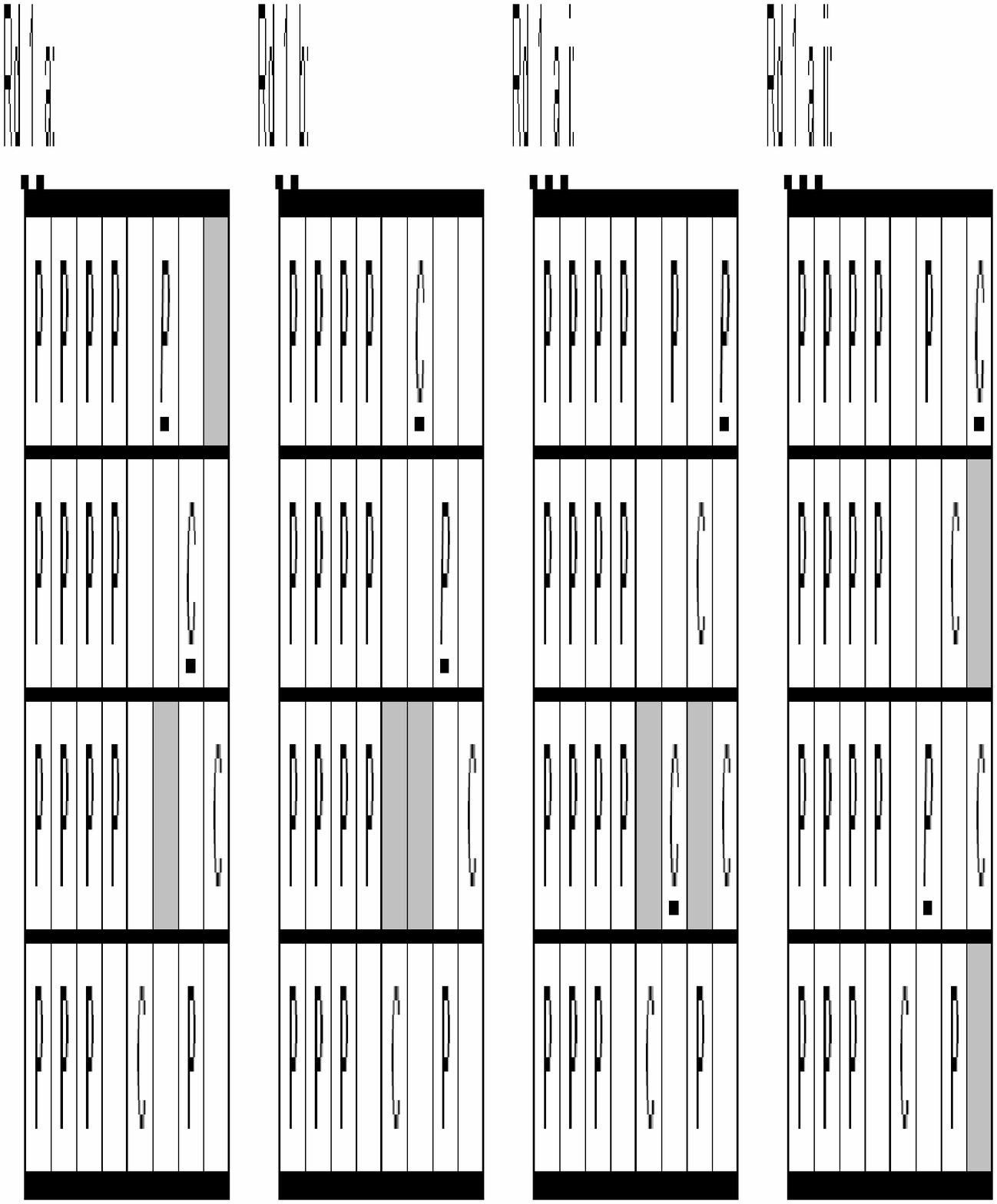}

\includegraphics[bb=0mm 0mm 208mm 296mm, width=120mm, height=15.5mm, viewport=3mm 4mm 205mm 292mm]{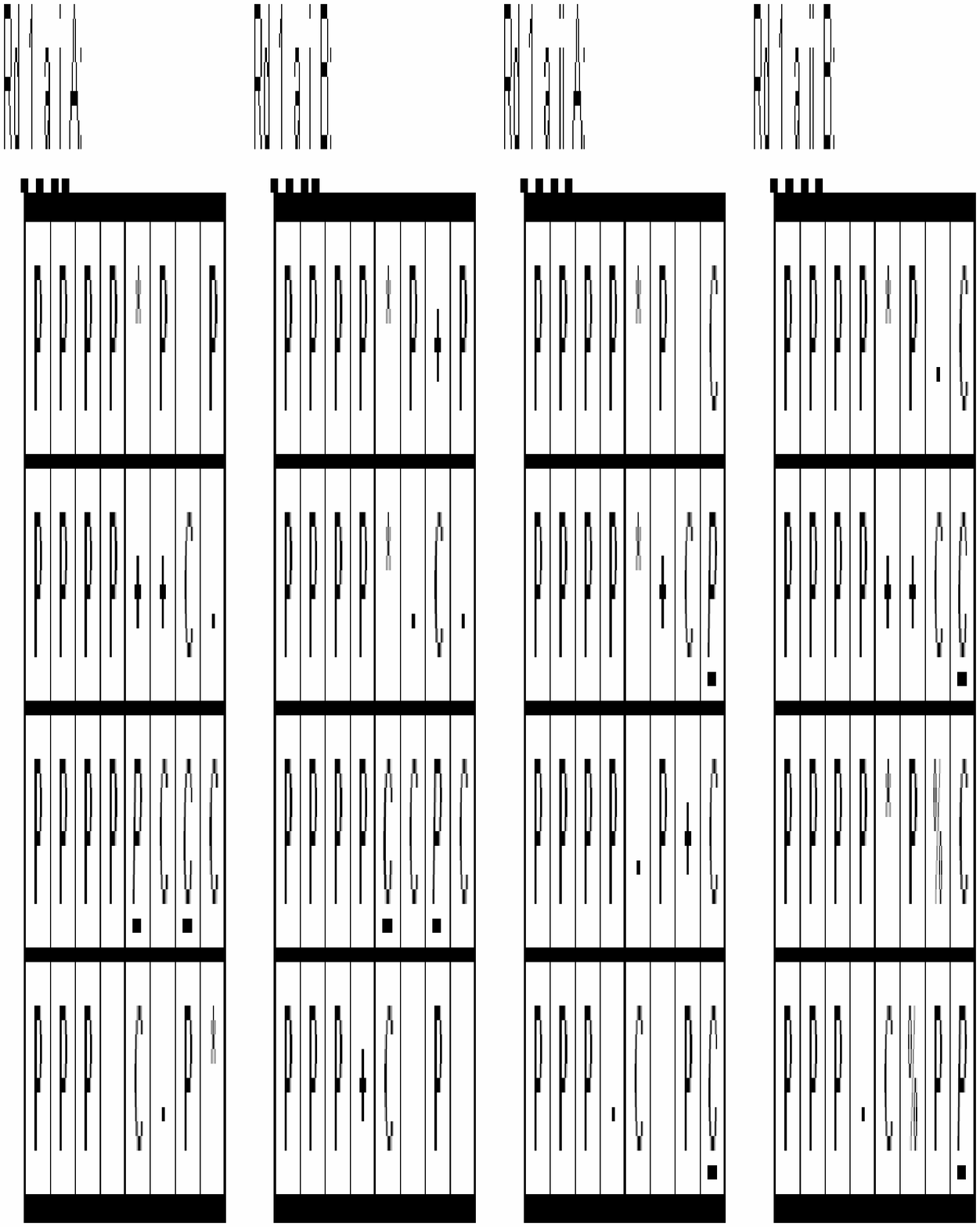}

\includegraphics[bb=0mm 0mm 208mm 296mm, width=120mm, height=15mm, viewport=3mm 4mm 205mm 292mm]{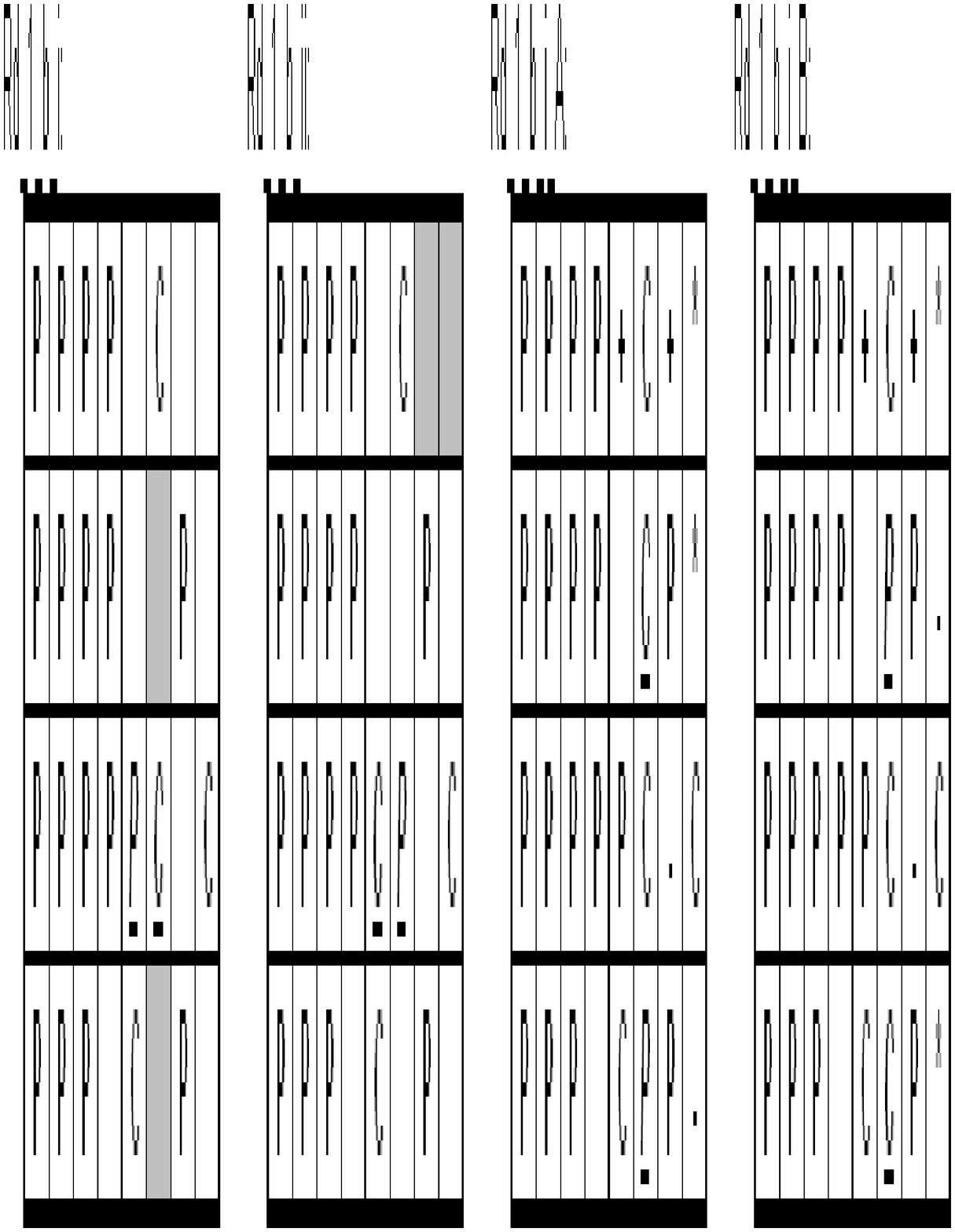}

\includegraphics[bb=0mm 0mm 208mm 296mm, width=120mm, height=16mm, viewport=3mm 4mm 205mm 292mm]{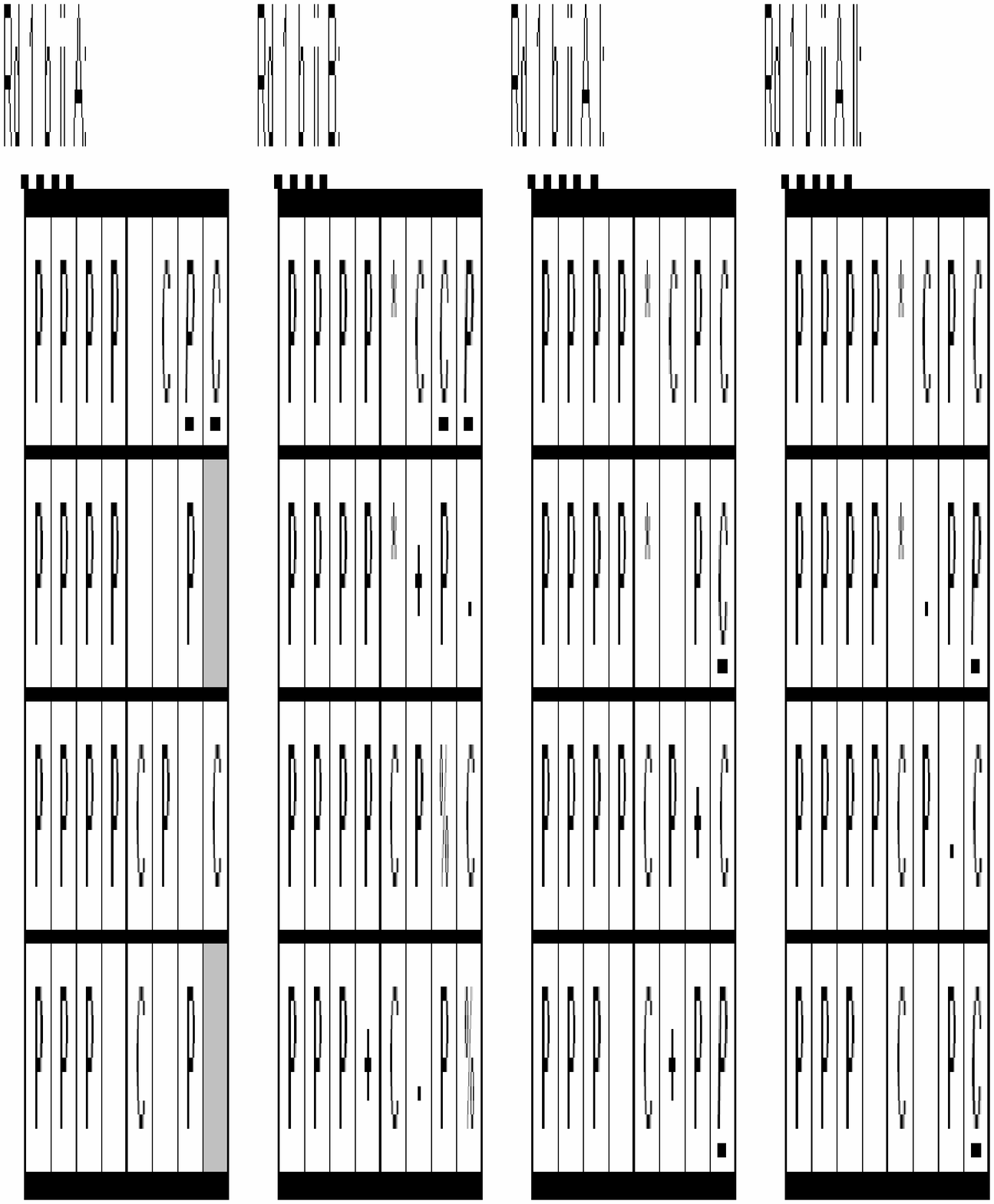}

\includegraphics[bb=0mm 0mm 208mm 296mm, width=120mm, height=15.5mm, viewport=3mm 4mm 205mm 292mm]{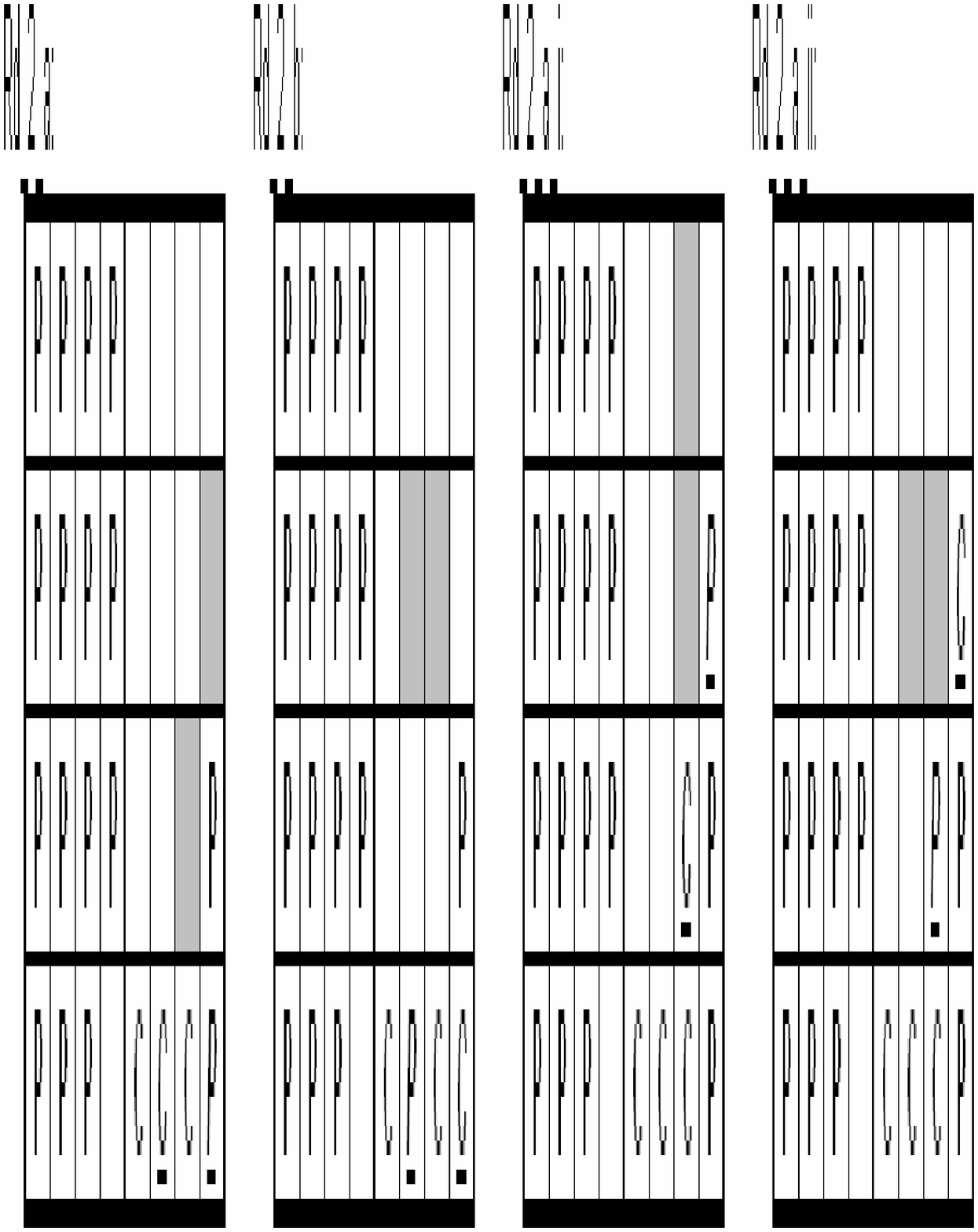}

\includegraphics[bb=0mm 0mm 208mm 296mm, width=120mm, height=15.5mm, viewport=3mm 4mm 205mm 292mm]{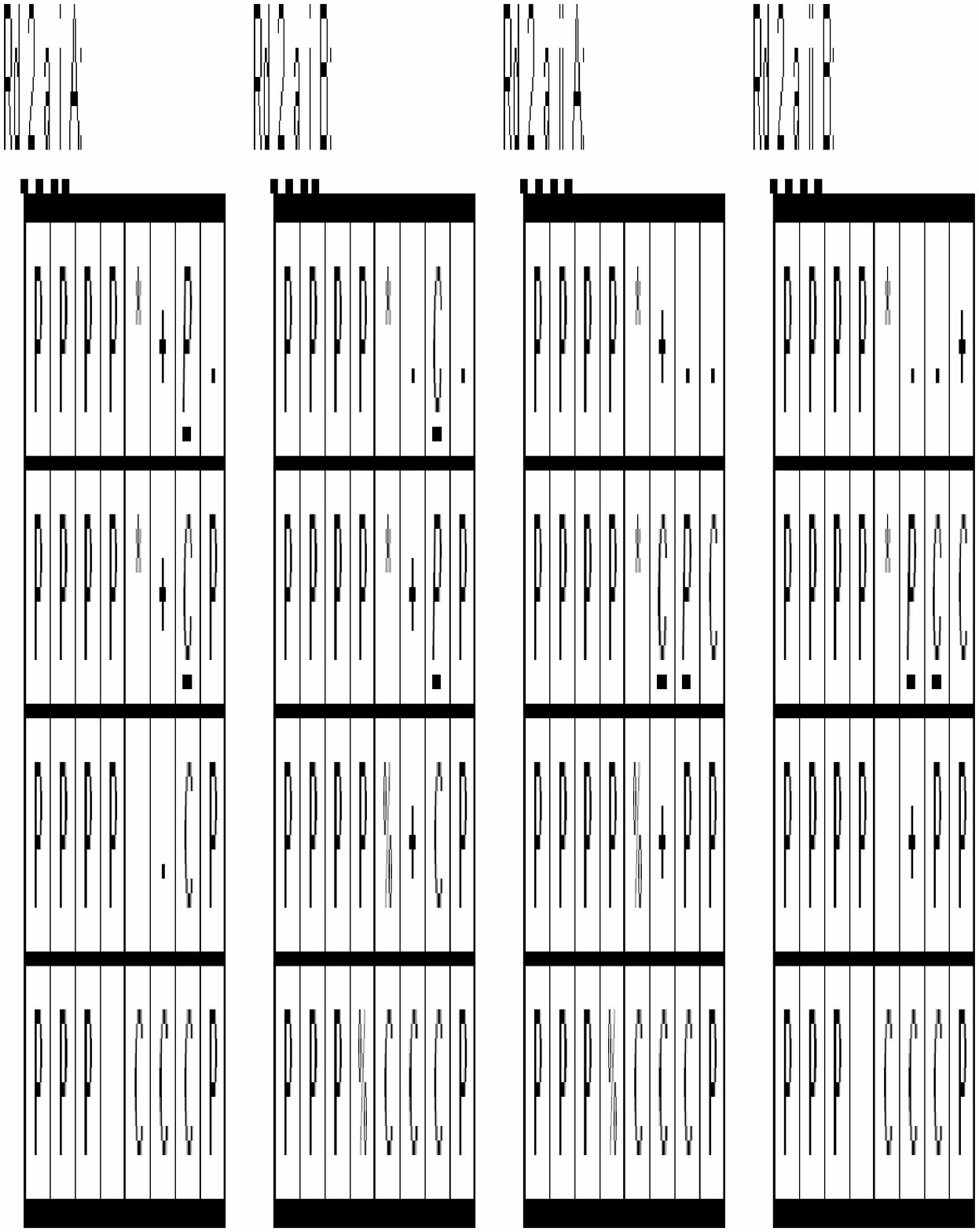}

\includegraphics[bb=0mm 0mm 208mm 296mm, width=120mm, height=16.4mm, viewport=3mm 4mm 205mm 292mm]{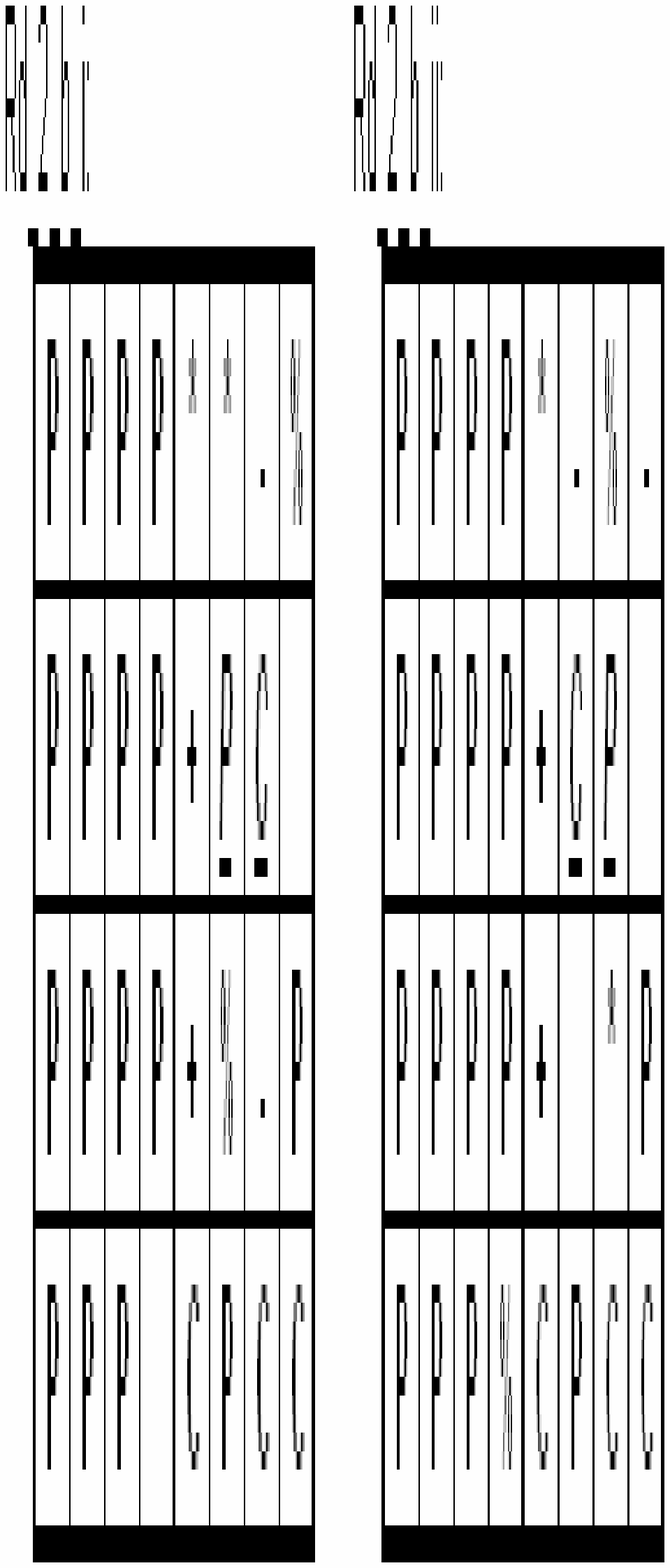}

\includegraphics[bb=0mm 0mm 208mm 296mm, width=120mm, height=18.5mm, viewport=3mm 4mm 205mm 292mm]{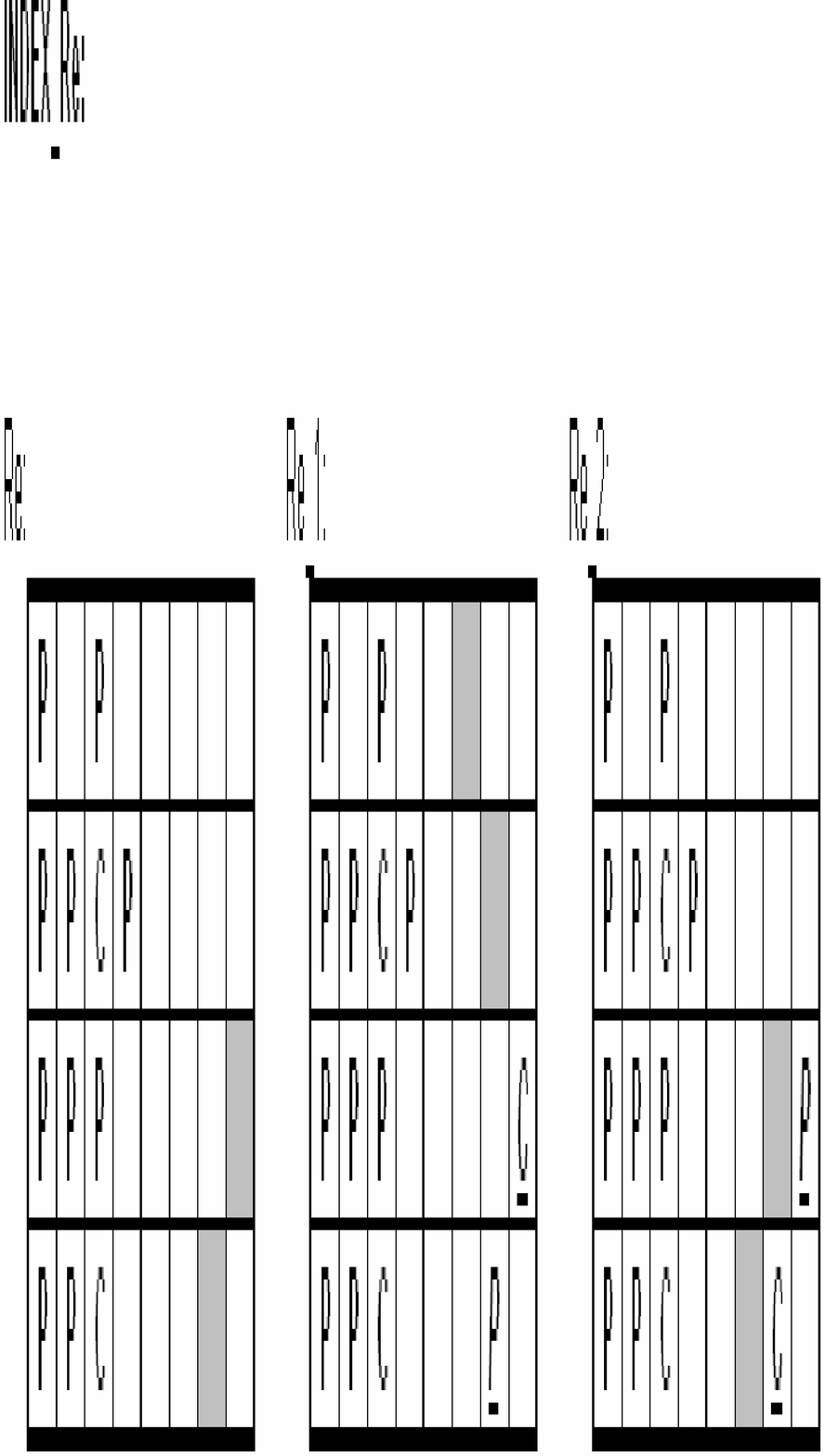}

\includegraphics[bb=0mm 0mm 208mm 296mm, width=120mm, height=16mm, viewport=3mm 4mm 205mm 292mm]{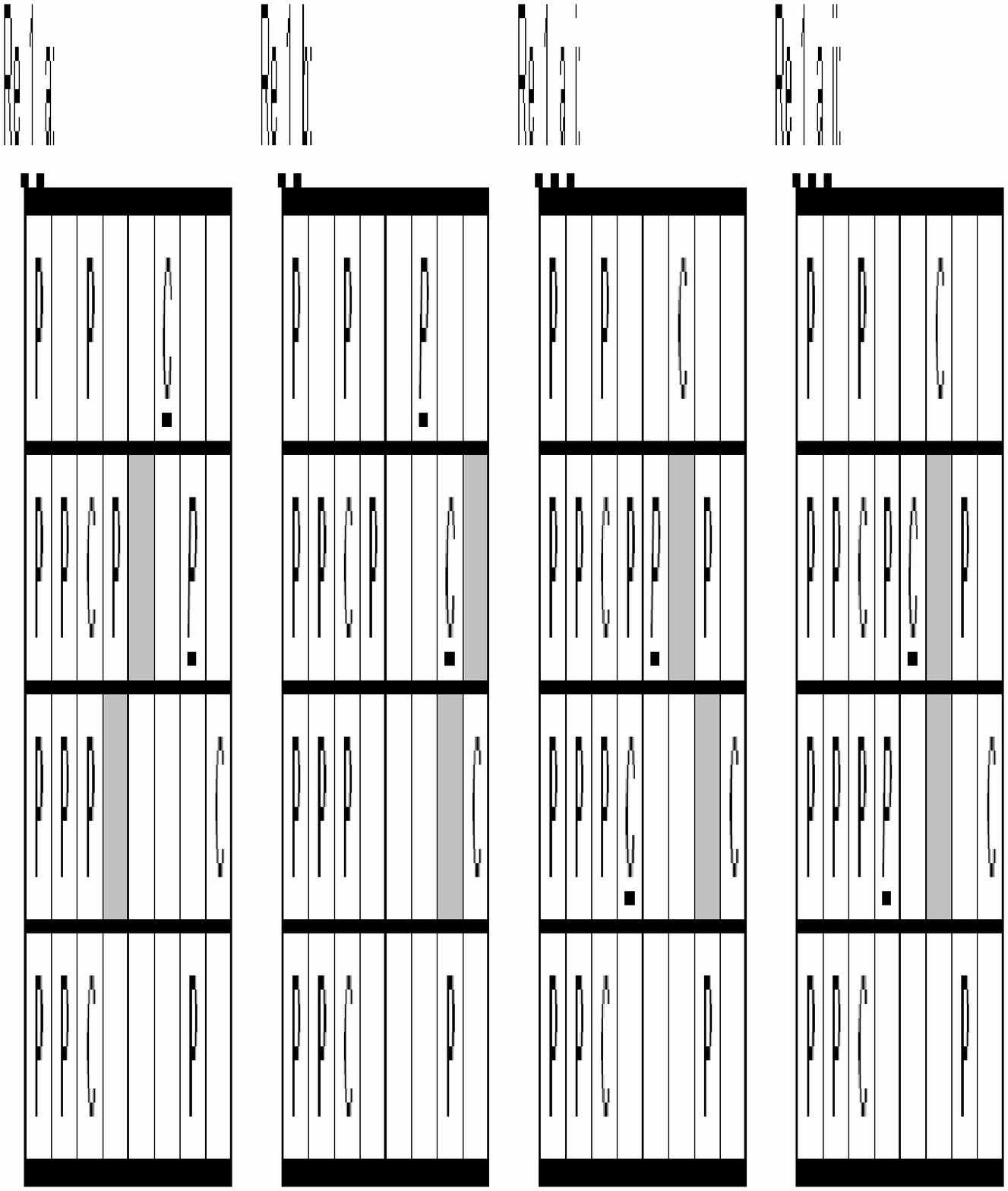}

\includegraphics[bb=0mm 0mm 208mm 296mm, width=120mm, height=15.5mm, viewport=3mm 4mm 205mm 292mm]{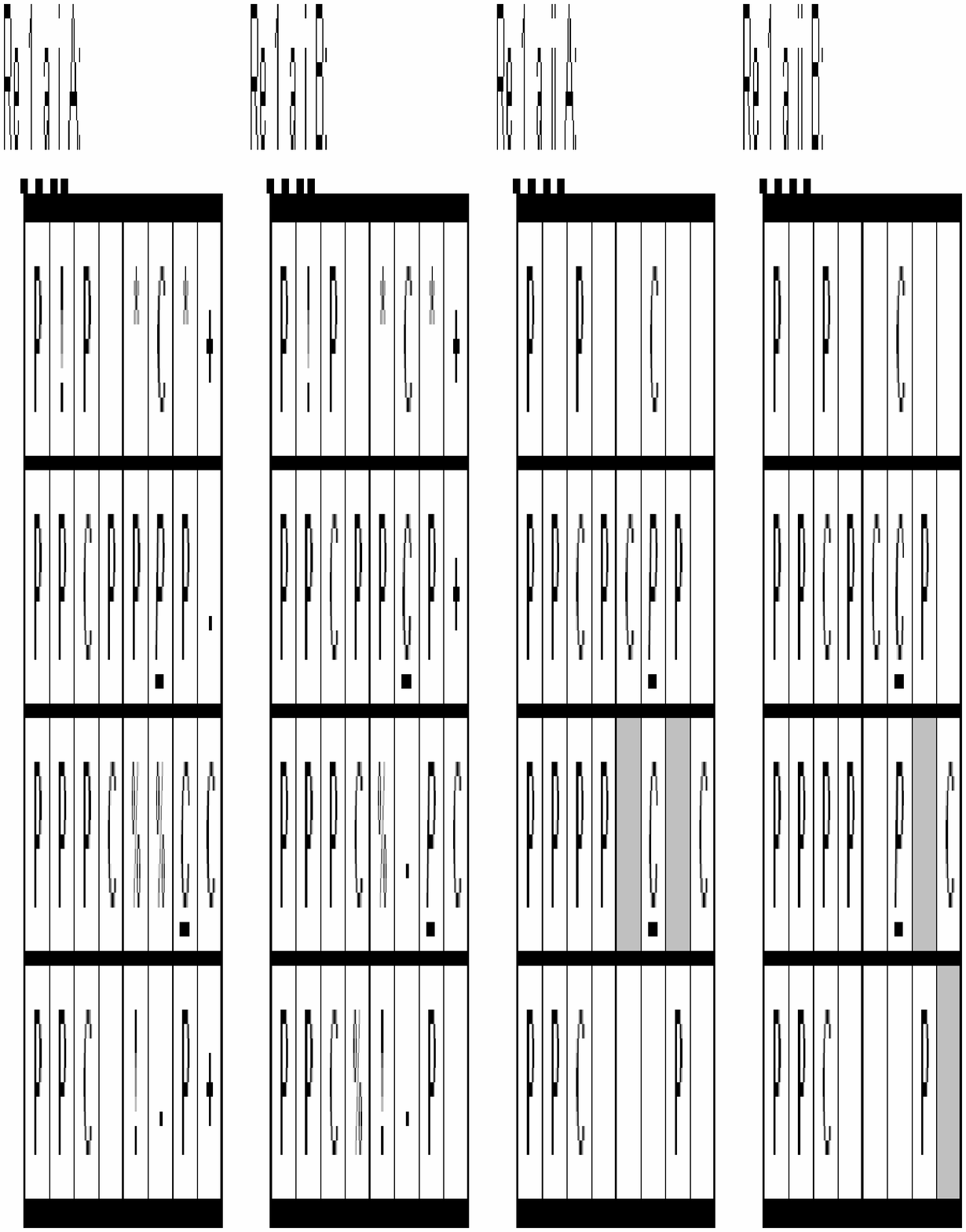}

\includegraphics[bb=0mm 0mm 208mm 296mm, width=120mm, height=16mm, viewport=3mm 4mm 205mm 292mm]{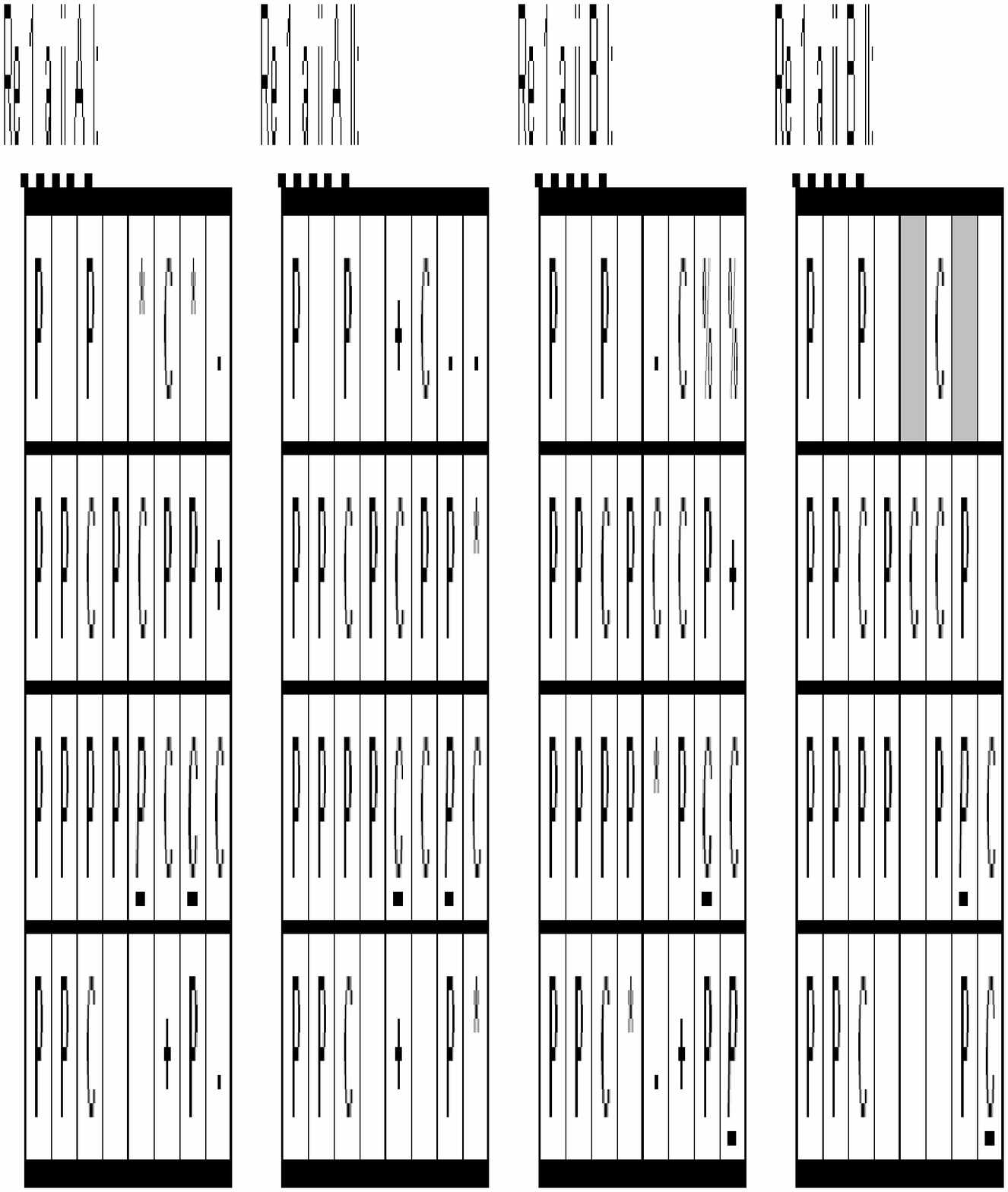}

\includegraphics[bb=0mm 0mm 208mm 296mm, width=120mm, height=15.5mm, viewport=3mm 4mm 205mm 292mm]{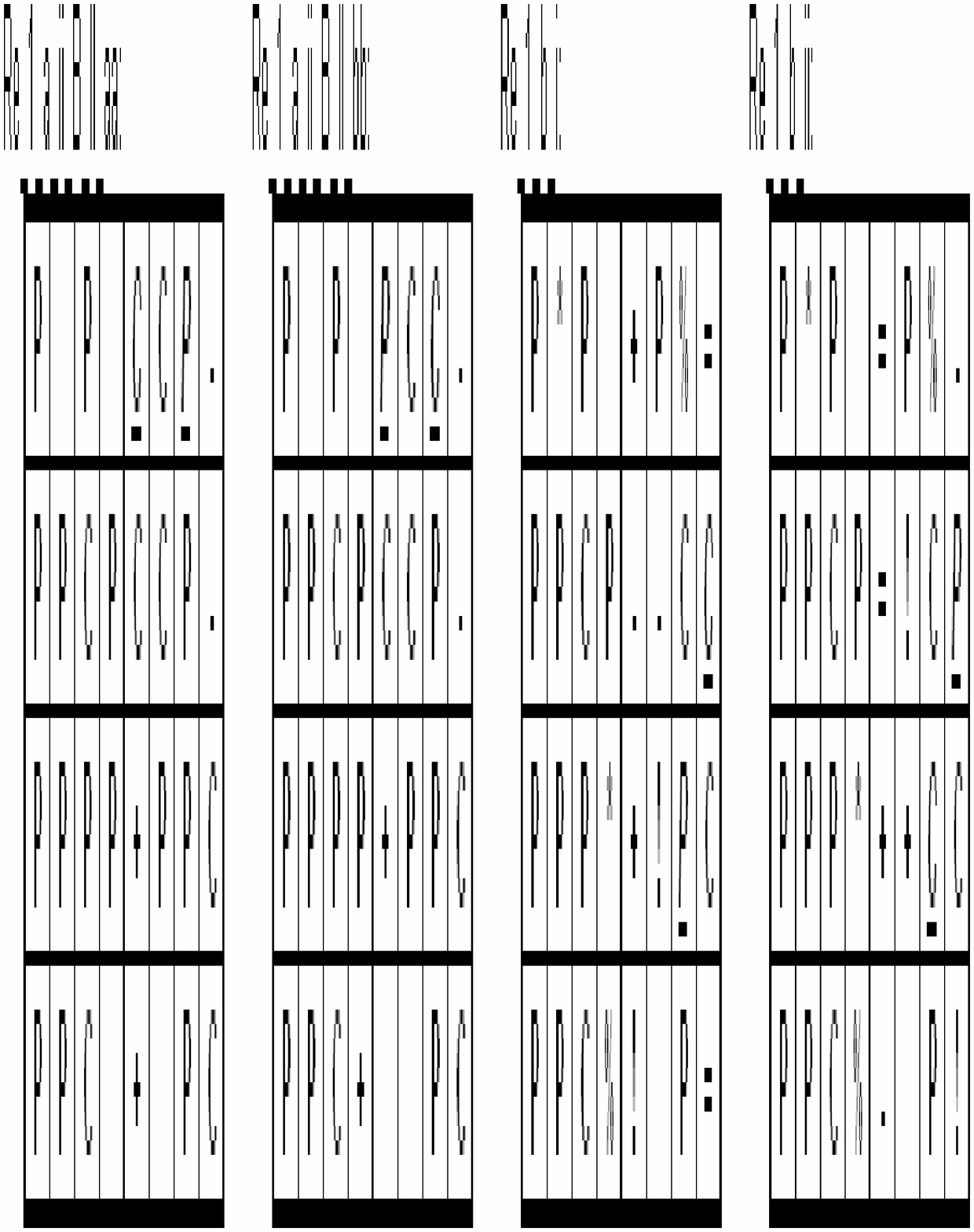}

\includegraphics[bb=0mm 0mm 208mm 296mm, width=120mm, height=15.5mm, viewport=3mm 4mm 205mm 292mm]{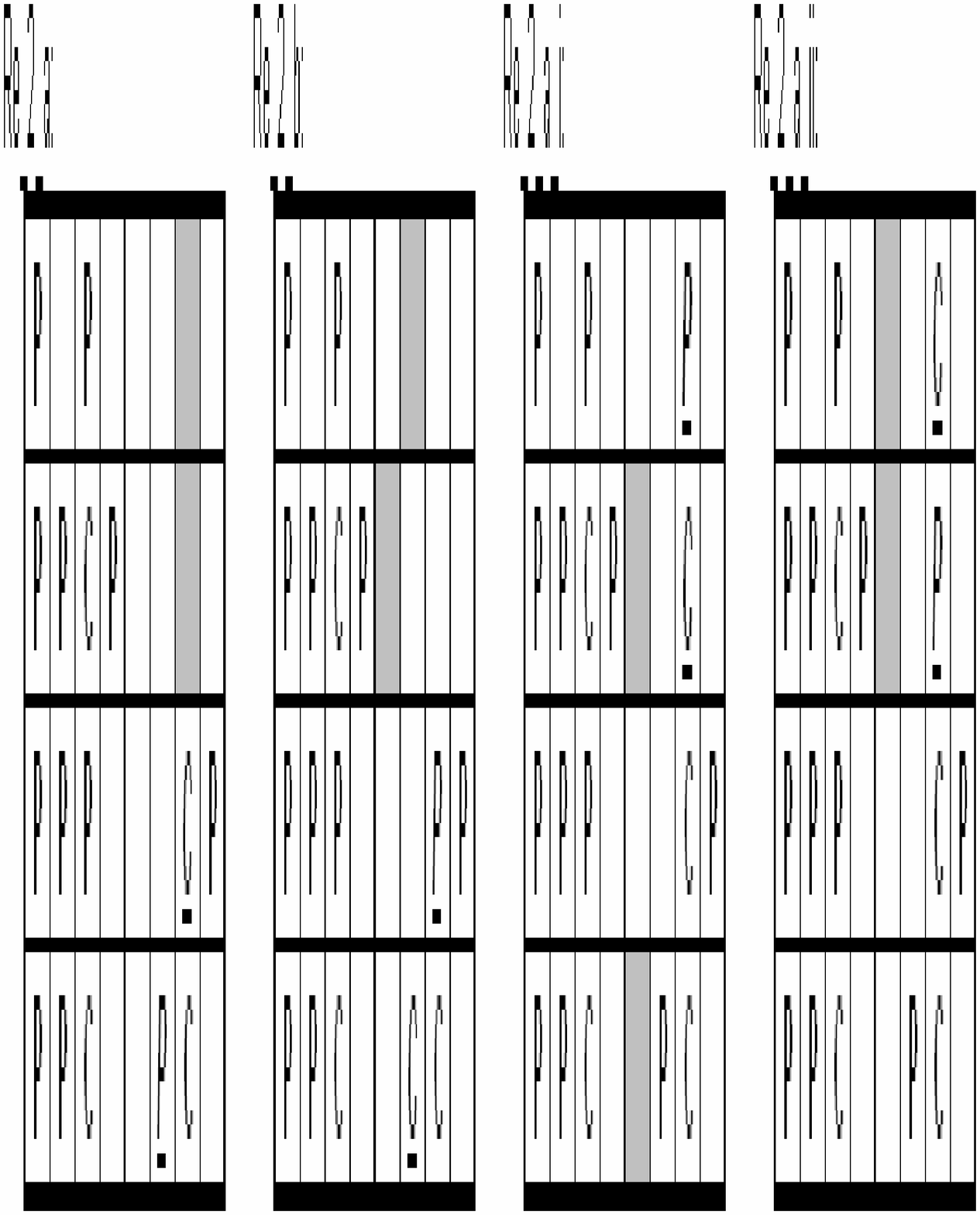}

\includegraphics[bb=0mm 0mm 208mm 296mm, width=120mm, height=16mm, viewport=3mm 4mm 205mm 292mm]{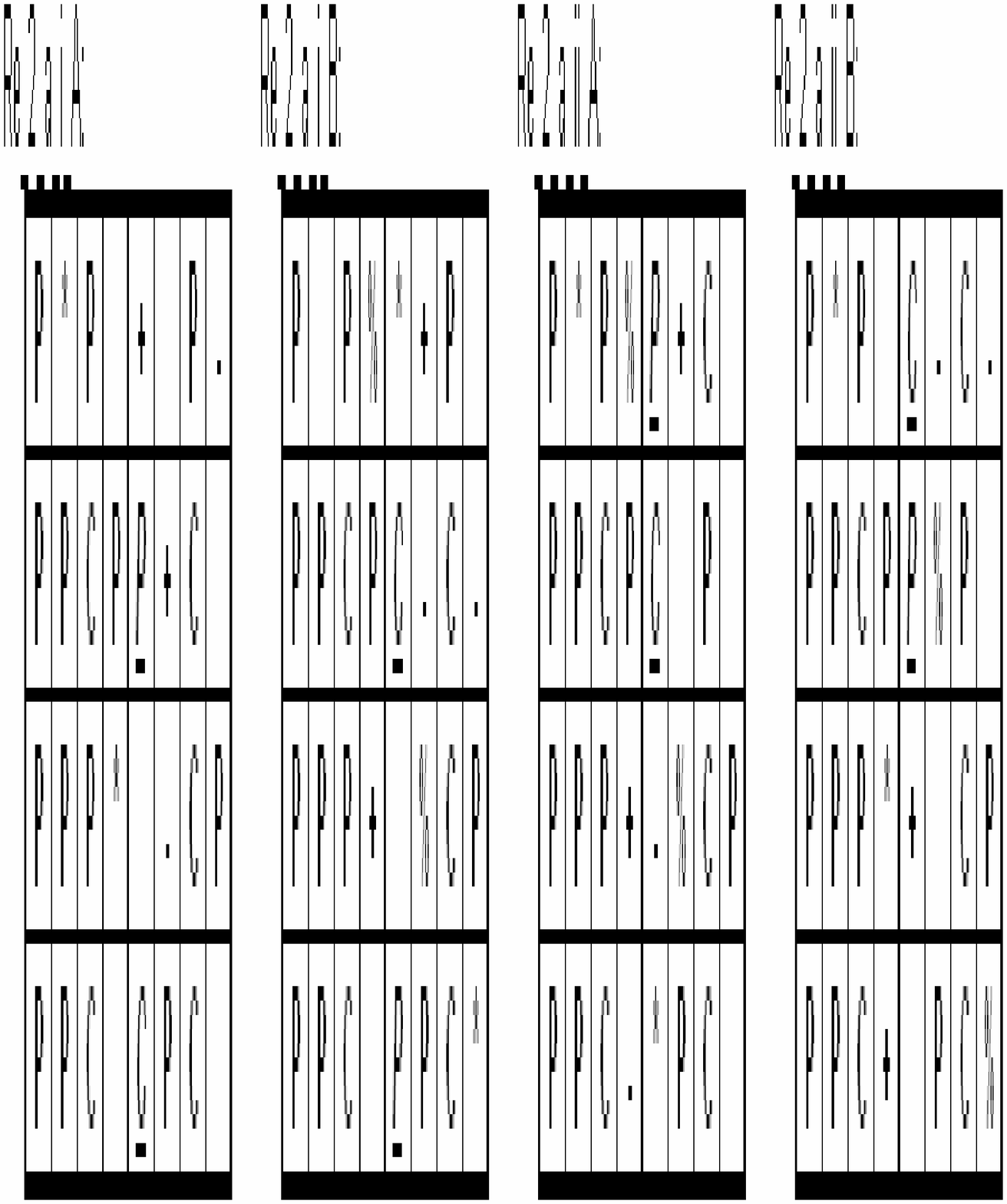}

\includegraphics[bb=0mm 0mm 208mm 296mm, width=120mm, height=16mm, viewport=3mm 4mm 205mm 292mm]{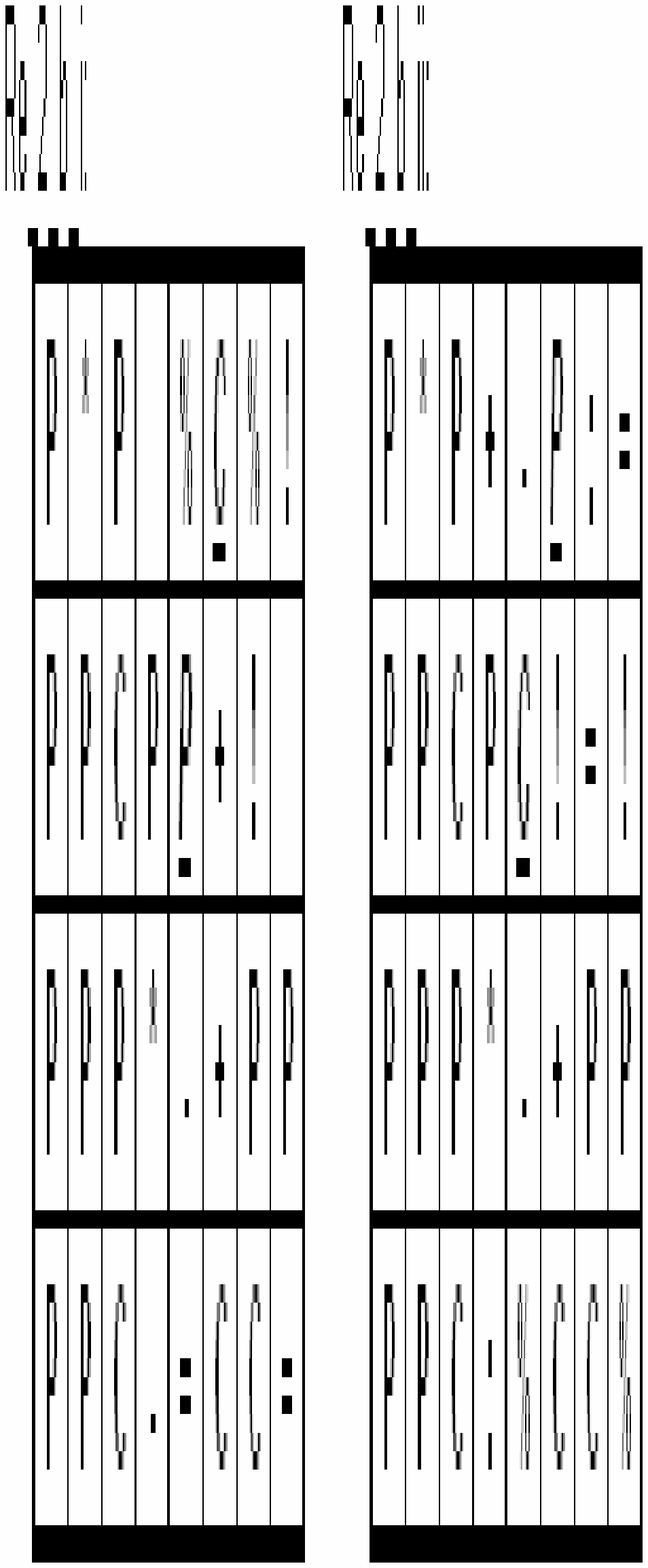}

\includegraphics[bb=0mm 0mm 208mm 296mm, width=120mm, height=20mm, viewport=3mm 4mm 205mm 292mm]{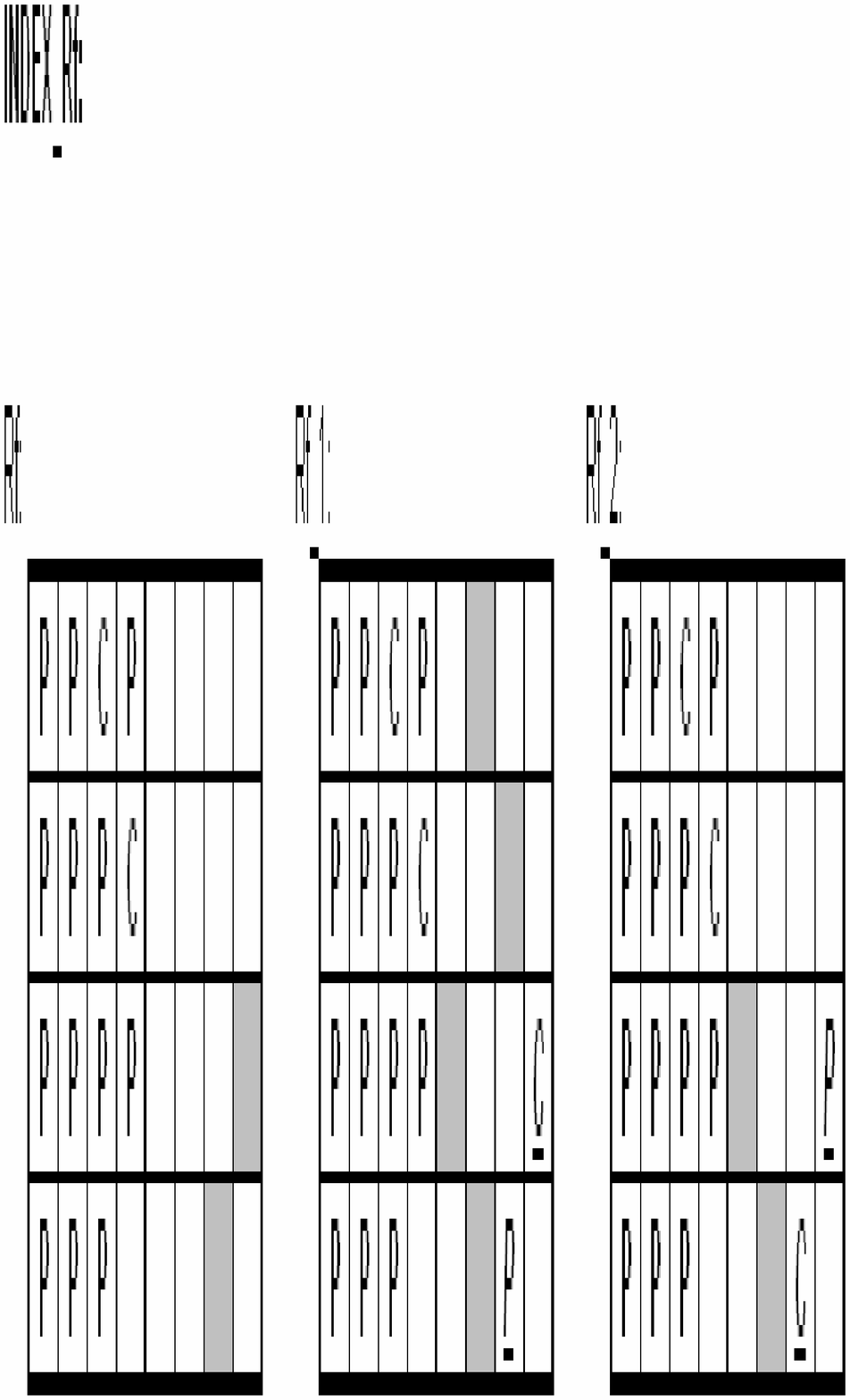}

\includegraphics[bb=0mm 0mm 208mm 296mm, width=120mm, height=15.5mm, viewport=3mm 4mm 205mm 292mm]{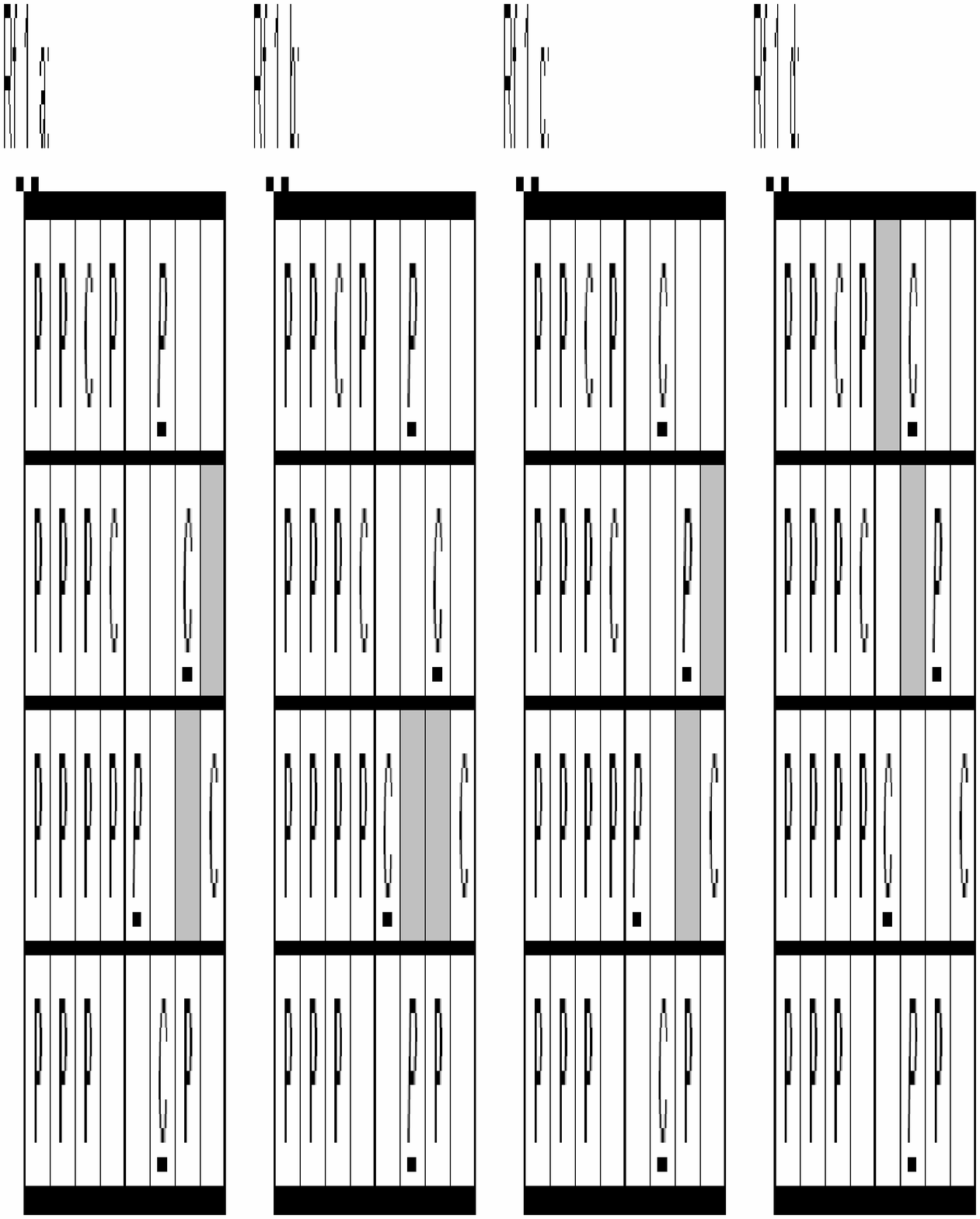}

\includegraphics[bb=0mm 0mm 208mm 296mm, width=120mm, height=15.5mm, viewport=3mm 4mm 205mm 292mm]{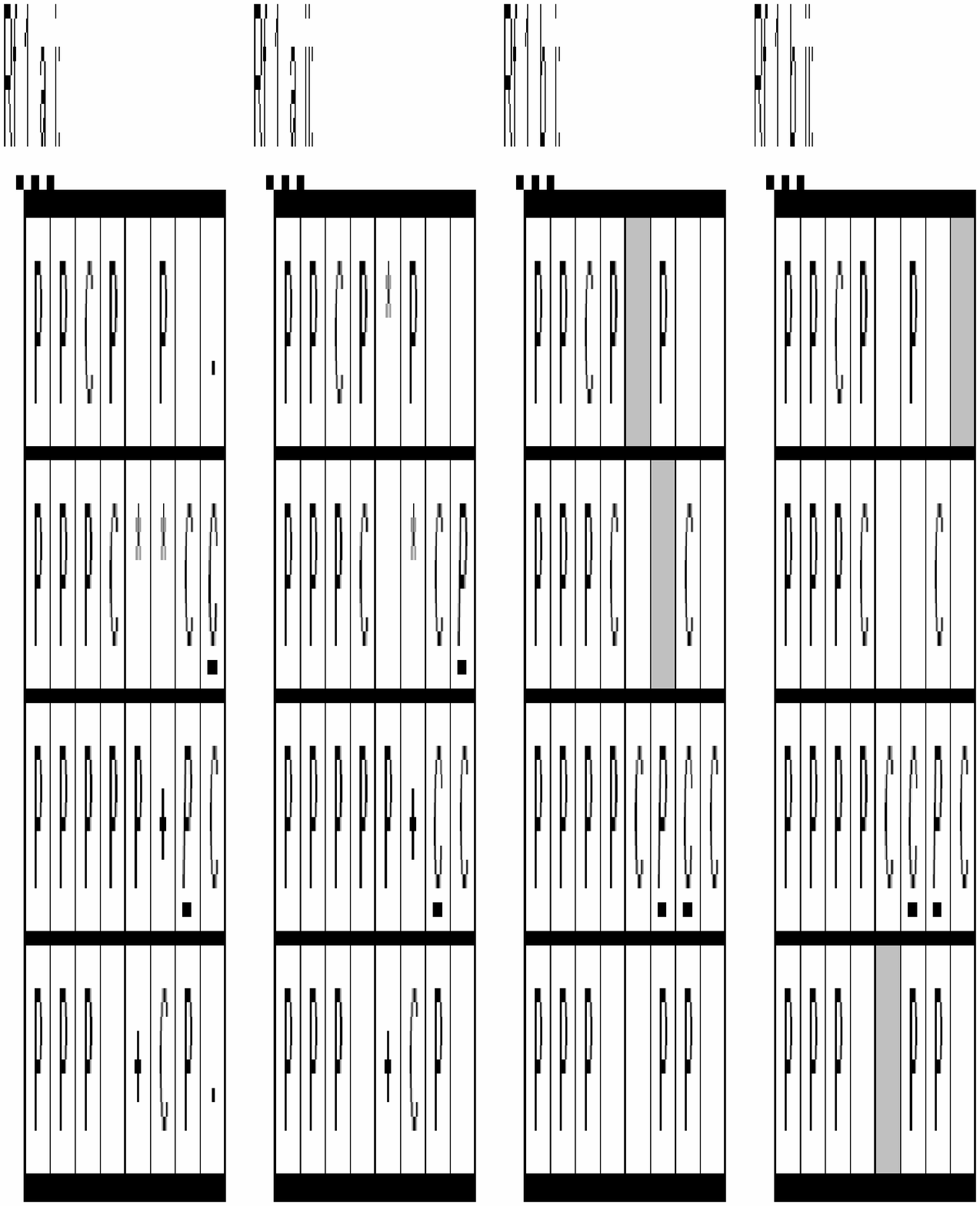}

\includegraphics[bb=0mm 0mm 208mm 296mm, width=120mm, height=15mm, viewport=3mm 4mm 205mm 292mm]{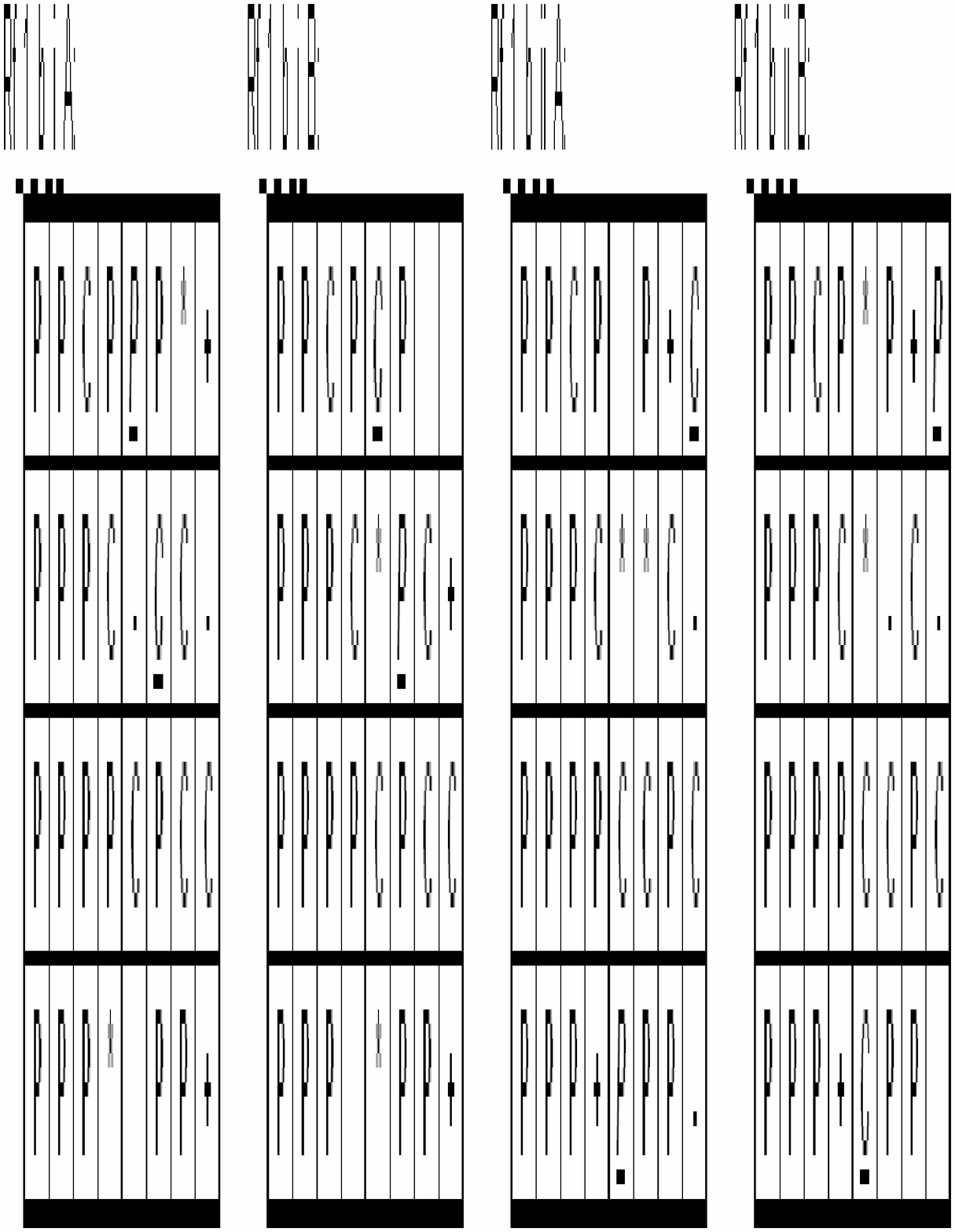}

\includegraphics[bb=0mm 0mm 208mm 296mm, width=120mm, height=16mm, viewport=3mm 4mm 205mm 292mm]{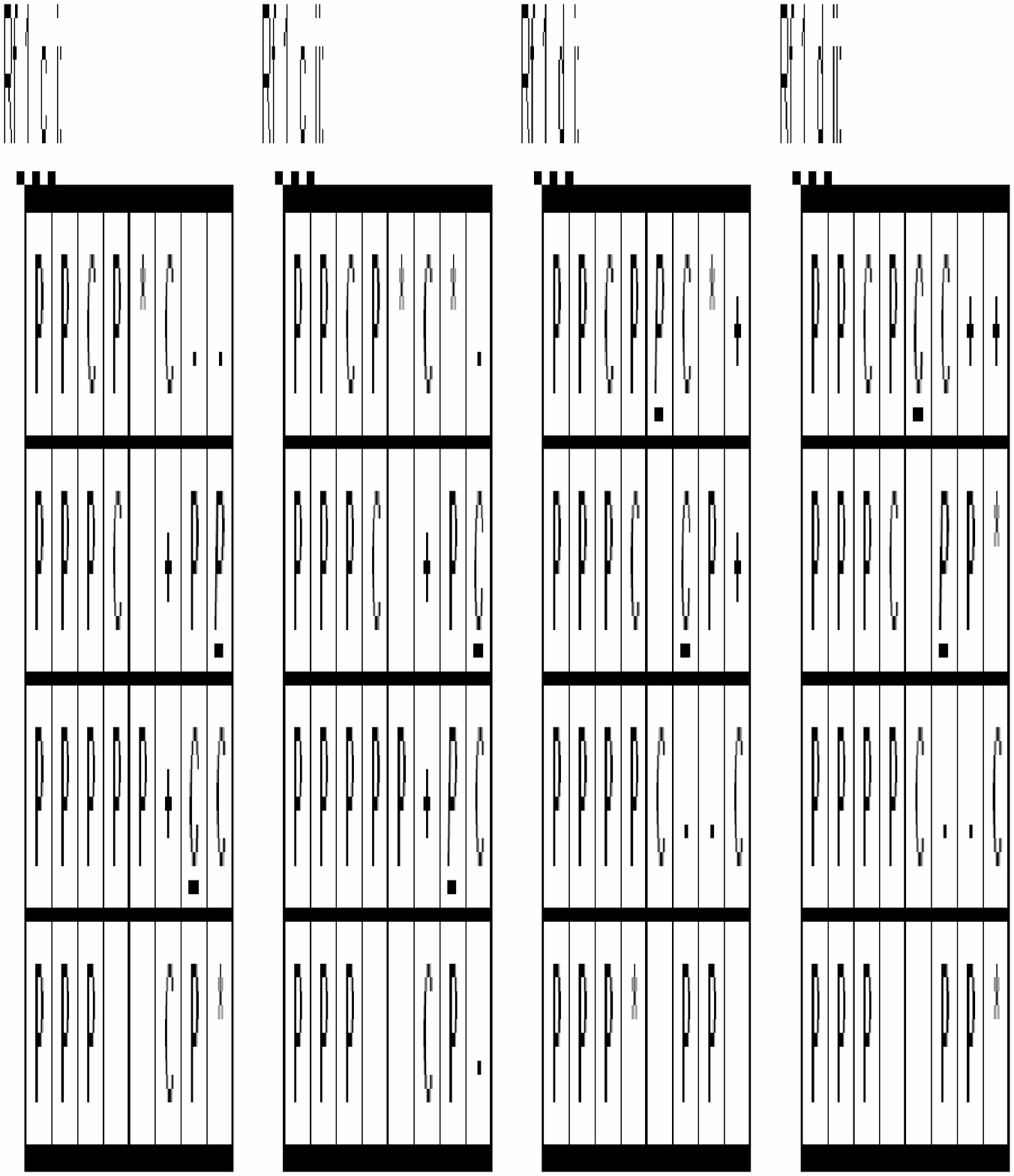}

\includegraphics[bb=0mm 0mm 208mm 296mm, width=120mm, height=15.5mm, viewport=3mm 4mm 205mm 292mm]{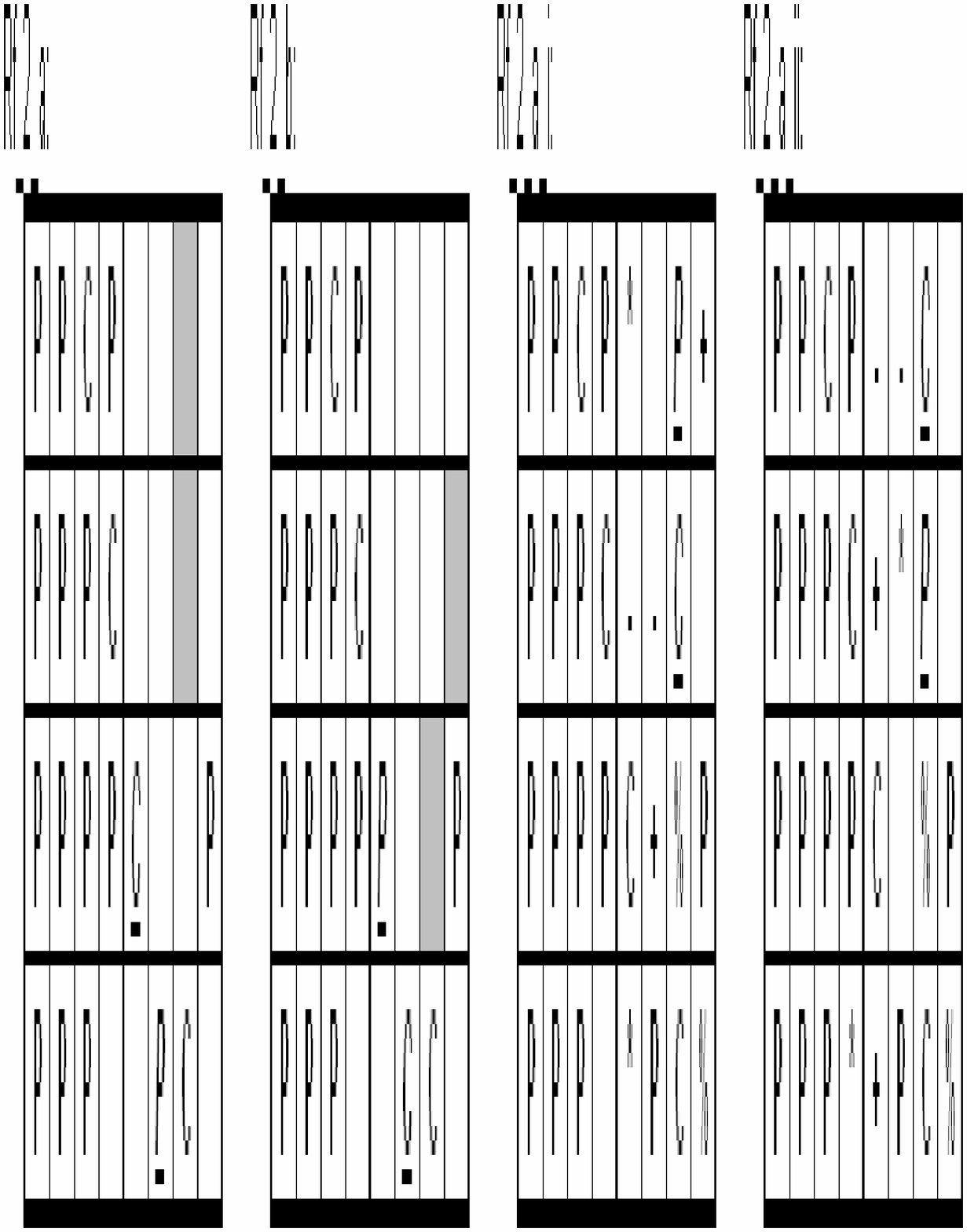}

\includegraphics[bb=0mm 0mm 208mm 296mm, width=120mm, height=15.5mm, viewport=3mm 4mm 205mm 292mm]{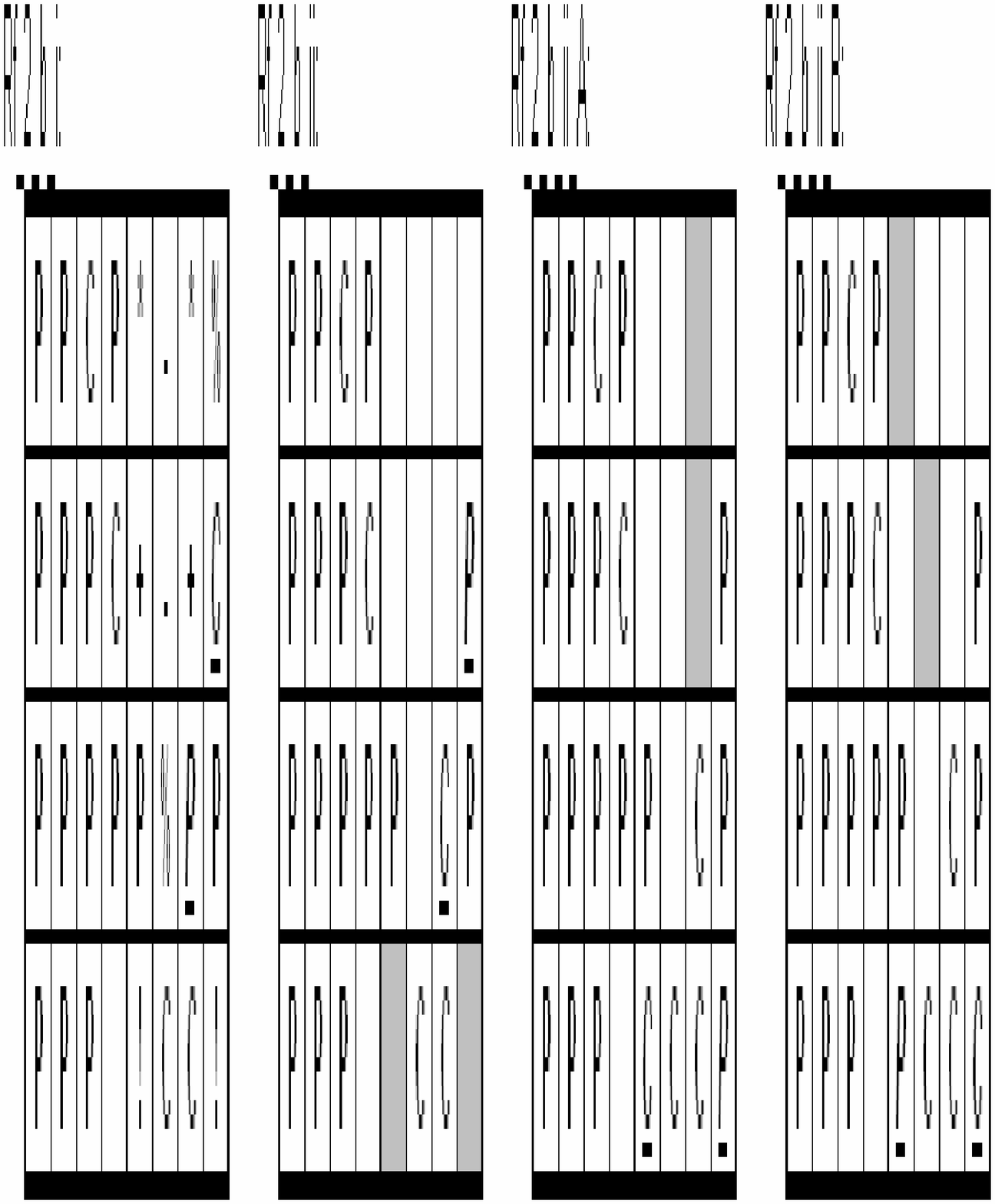}

\includegraphics[bb=0mm 0mm 208mm 296mm, width=120mm, height=15.5mm, viewport=3mm 4mm 205mm 292mm]{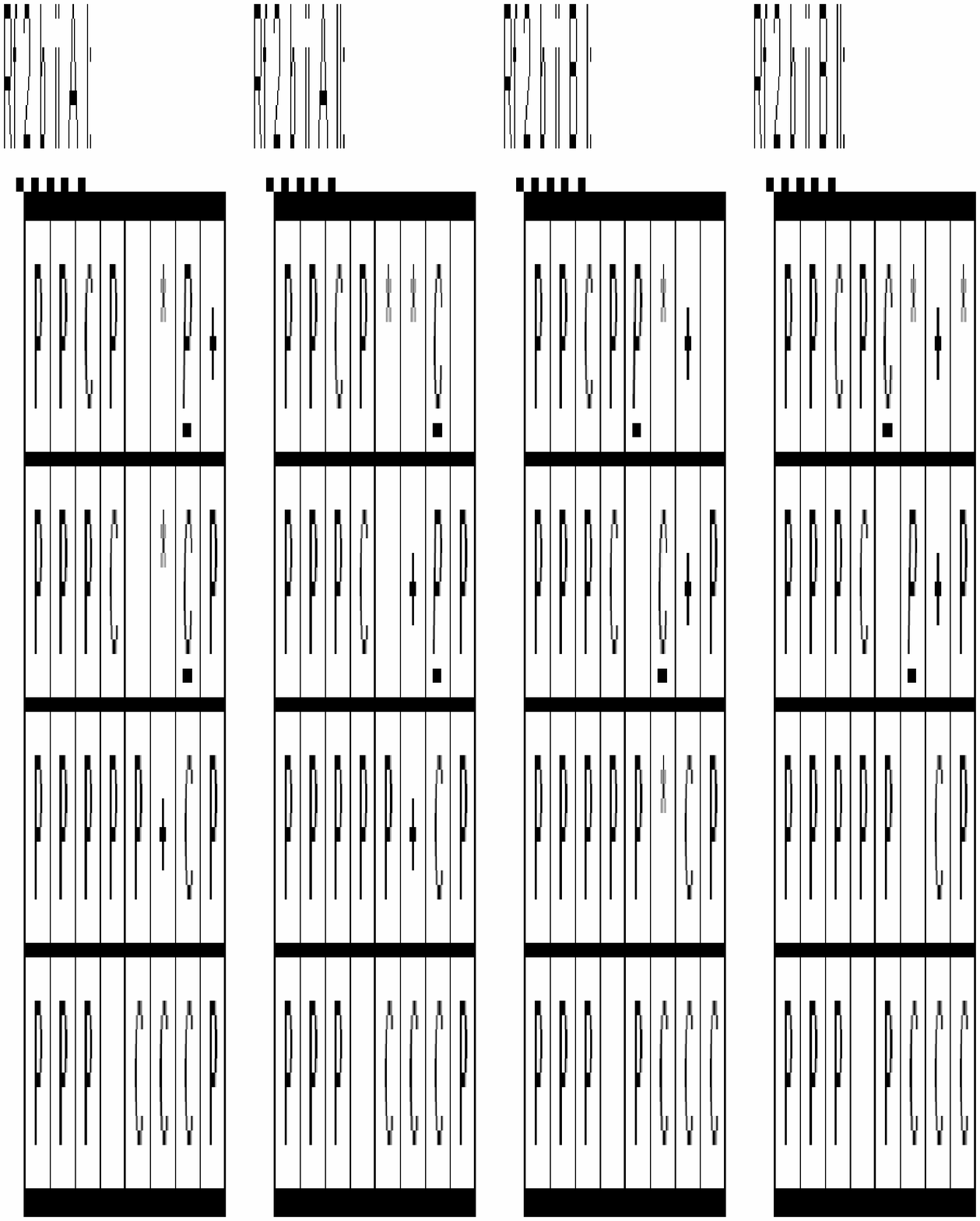}

\includegraphics[bb=0mm 0mm 208mm 296mm, width=120mm, height=19.5mm, viewport=3mm 4mm 205mm 292mm]{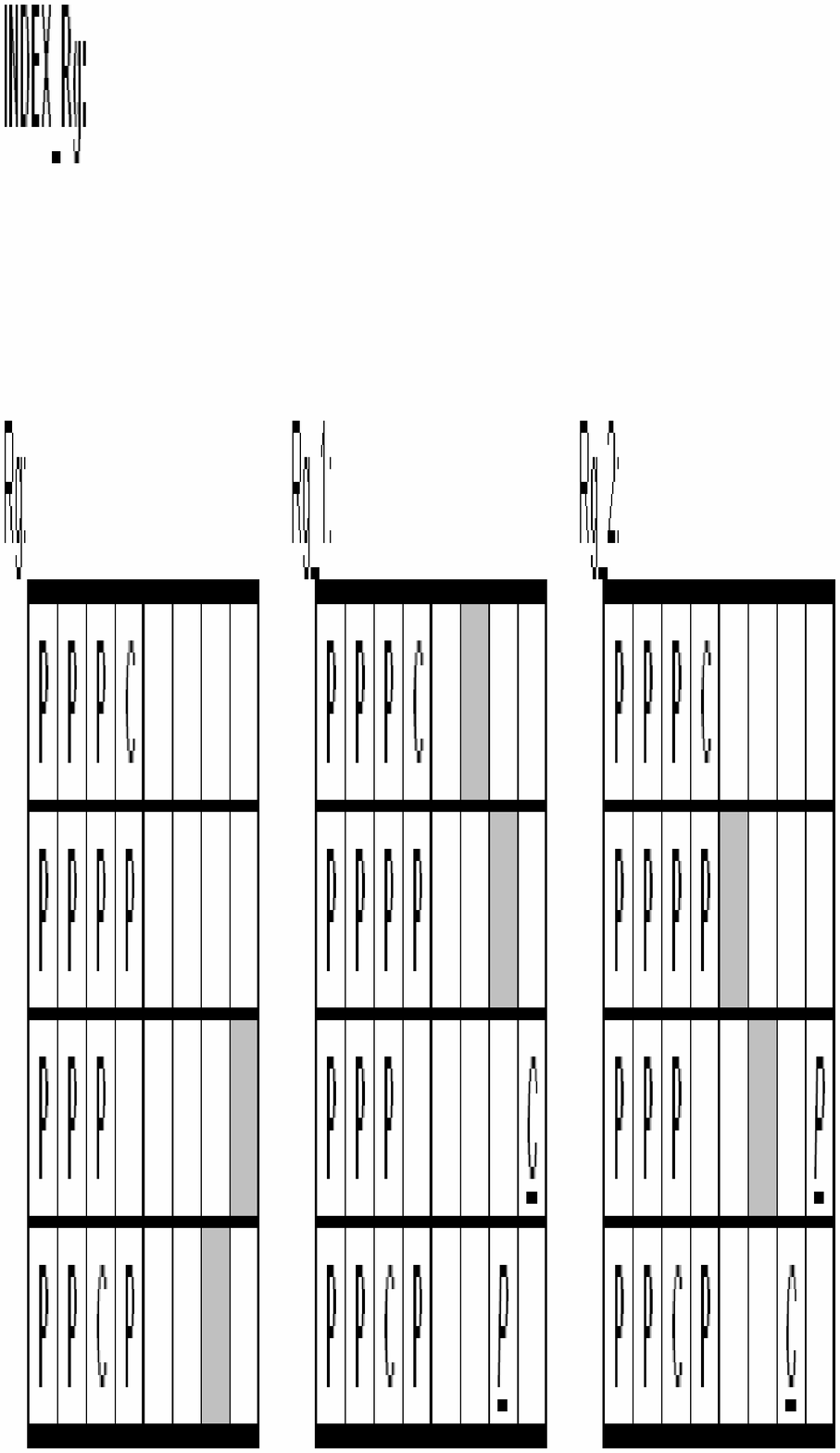}

\includegraphics[bb=0mm 0mm 208mm 296mm, width=120mm, height=15.5mm, viewport=3mm 4mm 205mm 292mm]{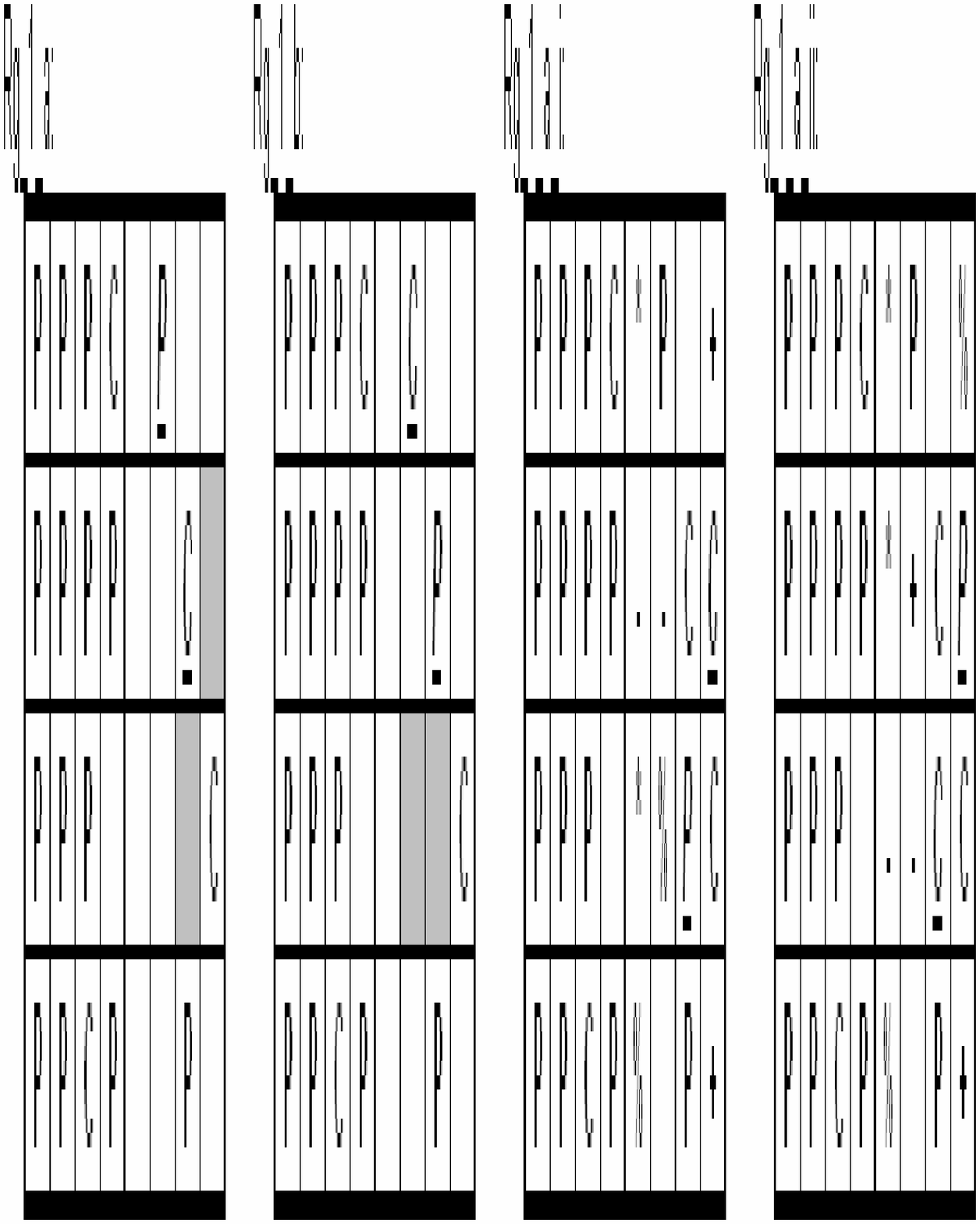}

\includegraphics[bb=0mm 0mm 208mm 296mm, width=120mm, height=15.5mm, viewport=3mm 4mm 205mm 292mm]{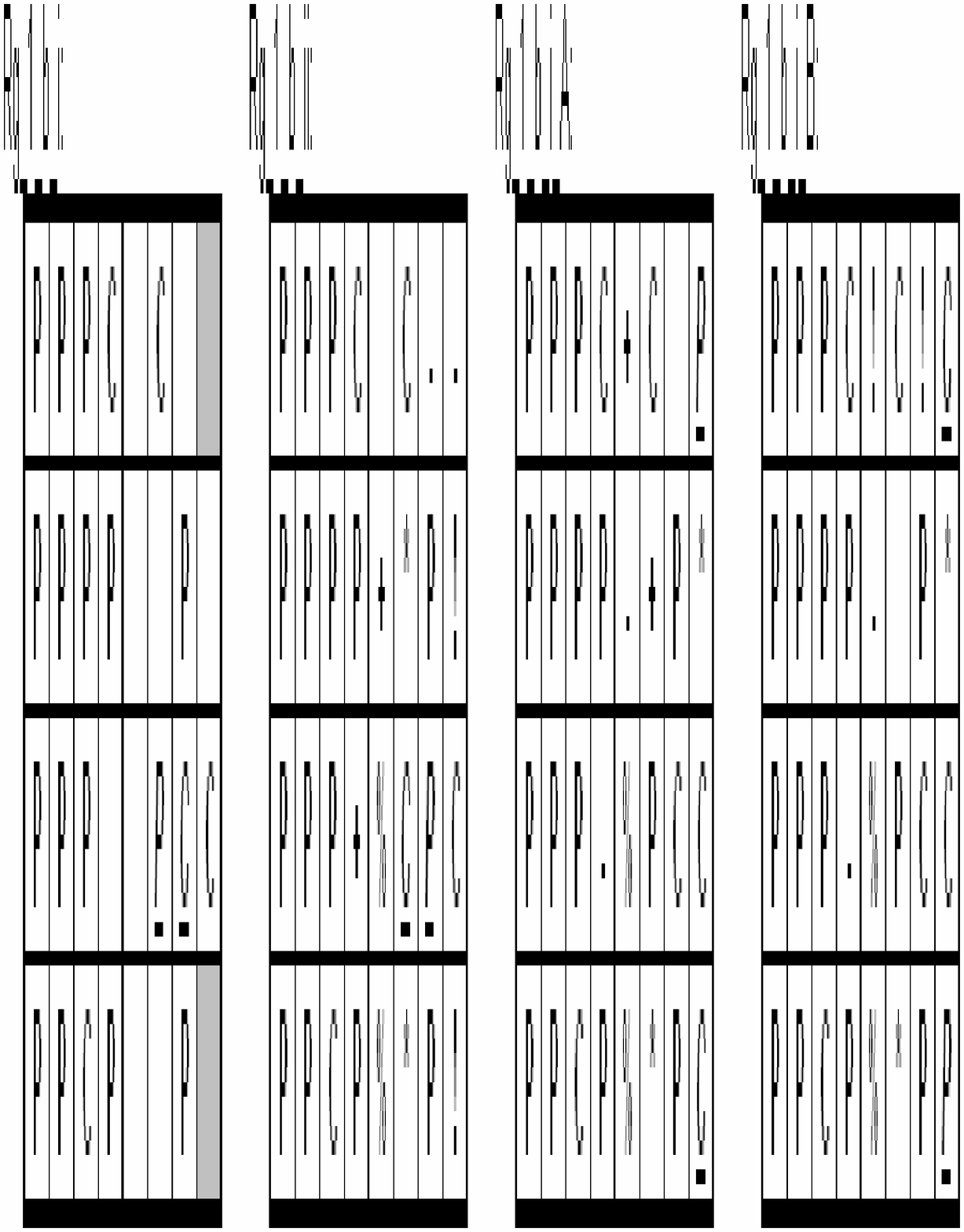}

\includegraphics[bb=0mm 0mm 208mm 296mm, width=120mm, height=15.5mm, viewport=3mm 4mm 205mm 292mm]{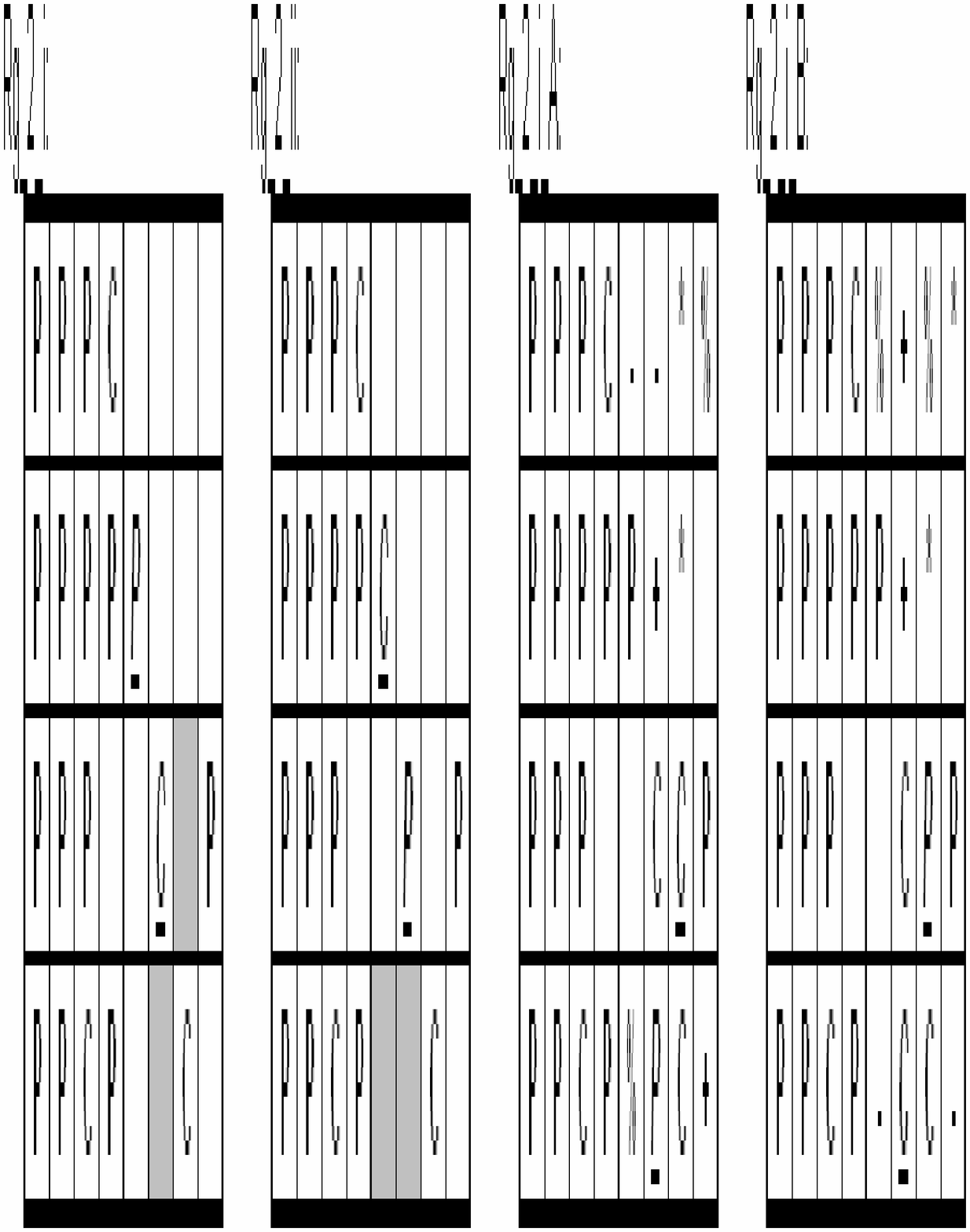}

\includegraphics[bb=0mm 0mm 208mm 296mm, width=120mm, height=15.5mm, viewport=3mm 4mm 205mm 292mm]{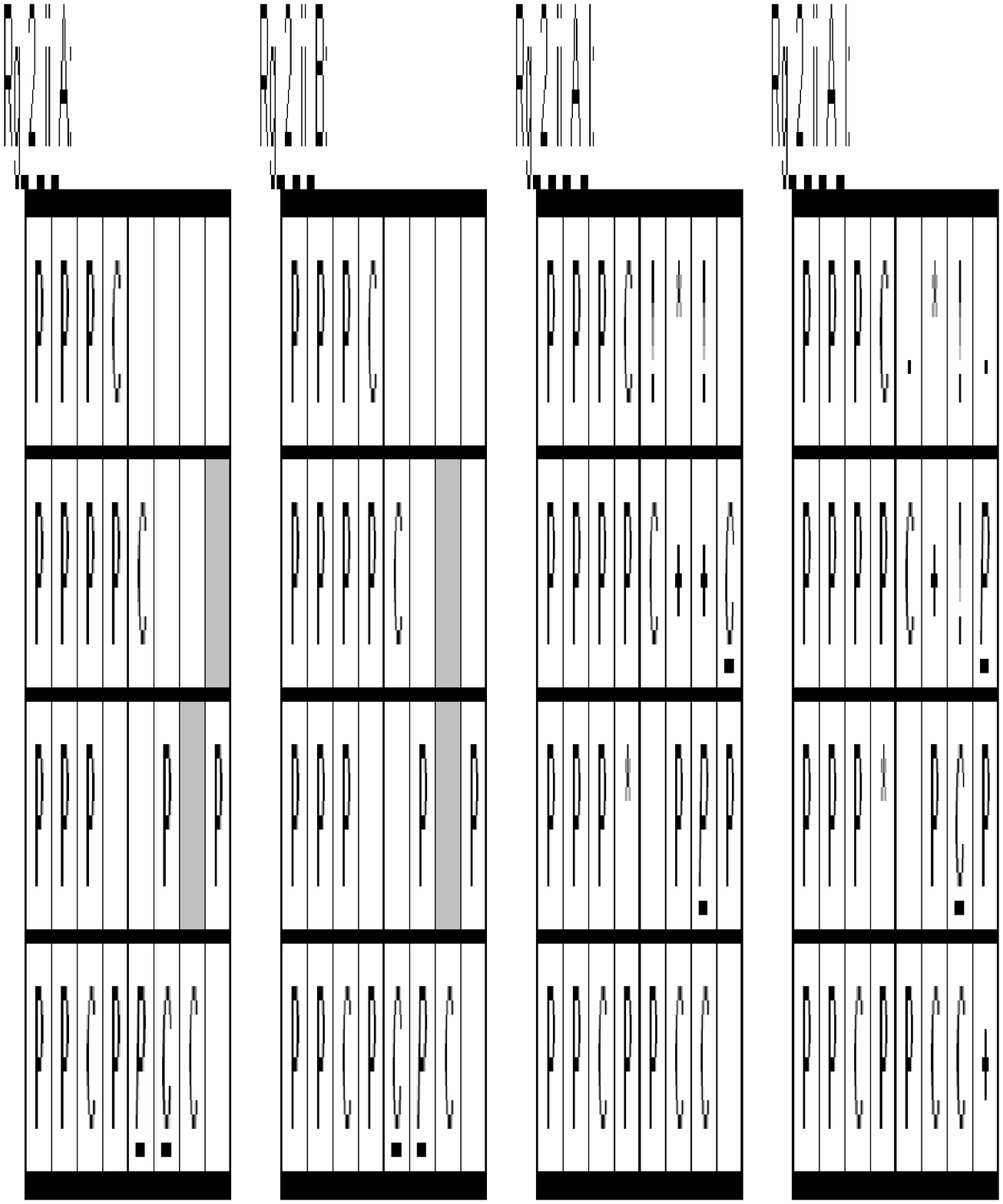}

\includegraphics[bb=0mm 0mm 208mm 296mm, width=120mm, height=15.5mm, viewport=3mm 4mm 205mm 292mm]{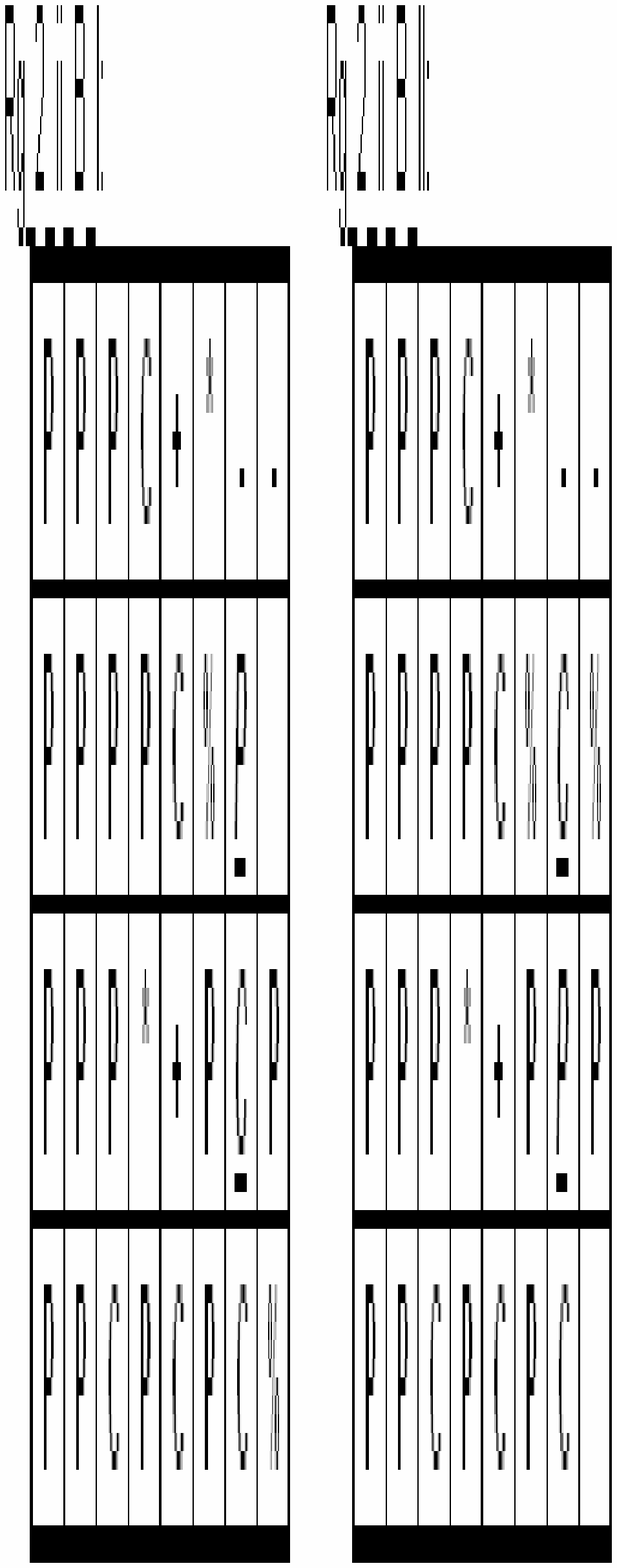}

\includegraphics[bb=0mm 0mm 208mm 296mm, width=120mm, height=16mm, viewport=3mm 4mm 205mm 292mm]{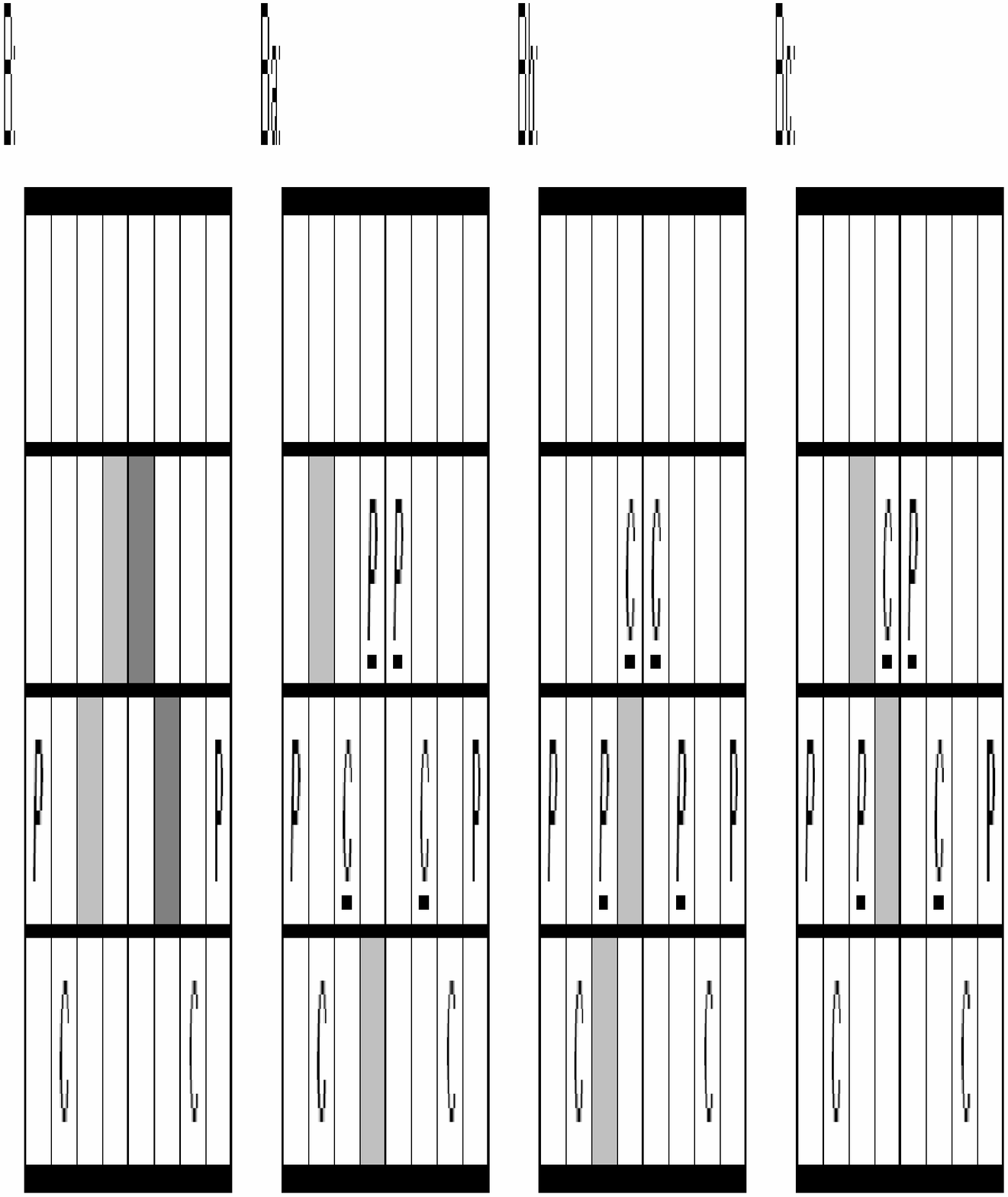}

\includegraphics[bb=0mm 0mm 208mm 296mm, width=120mm, height=16mm, viewport=3mm 4mm 205mm 292mm]{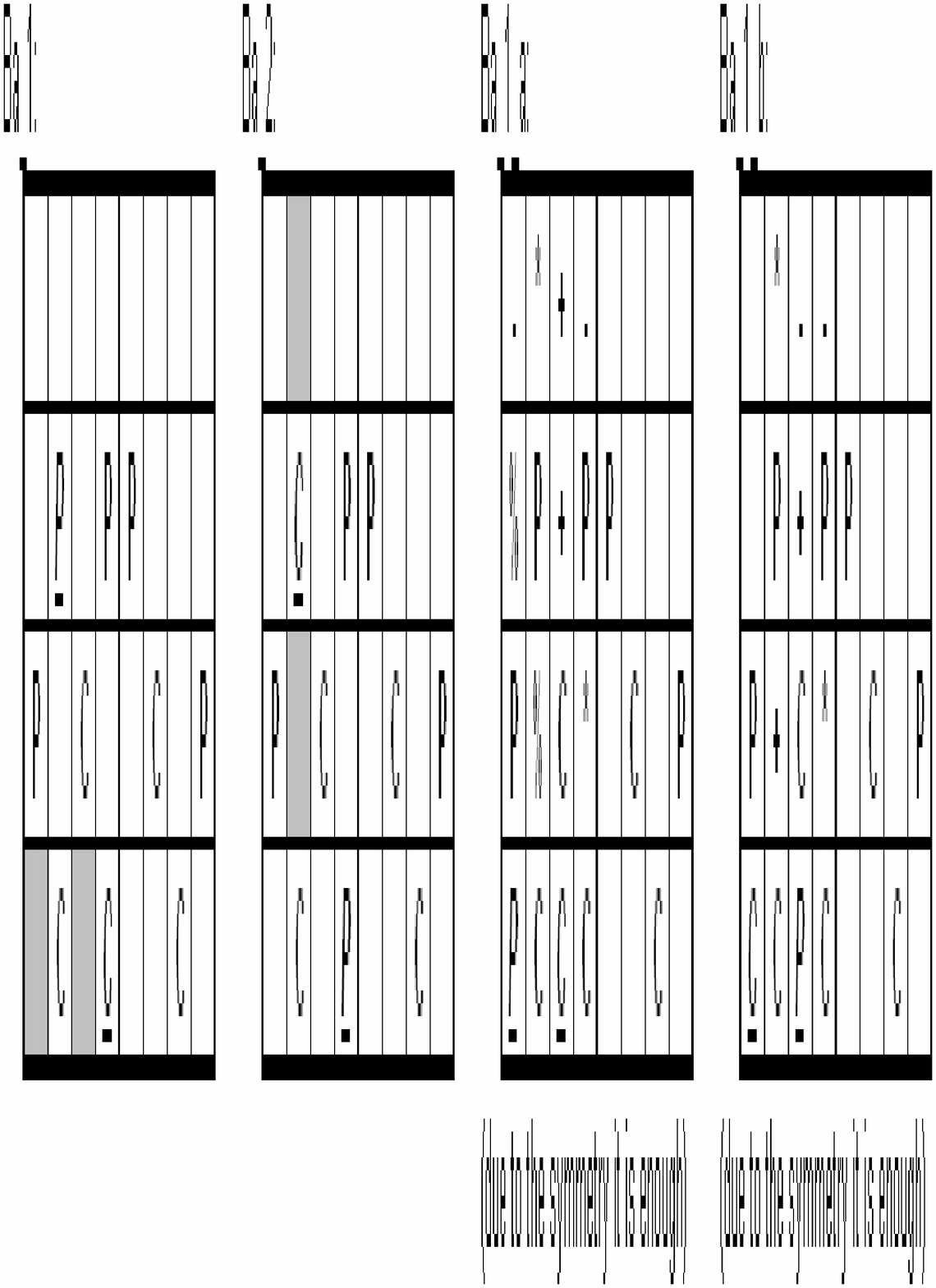}

\includegraphics[bb=0mm 0mm 208mm 296mm, width=120mm, height=15.6mm, viewport=3mm 4mm 205mm 292mm]{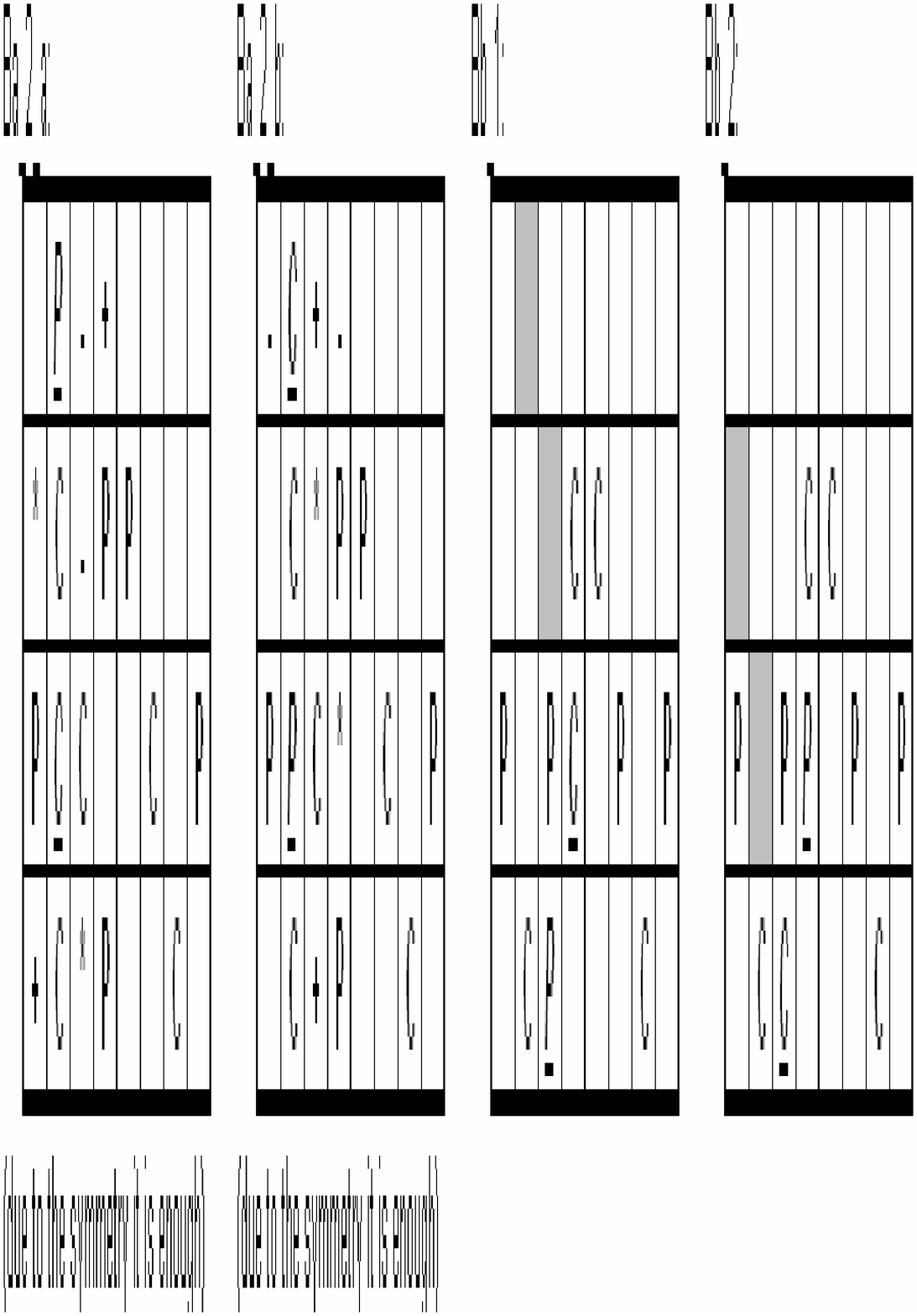}

\includegraphics[bb=0mm 0mm 208mm 296mm, width=120mm, height=16mm, viewport=3mm 4mm 205mm 292mm]{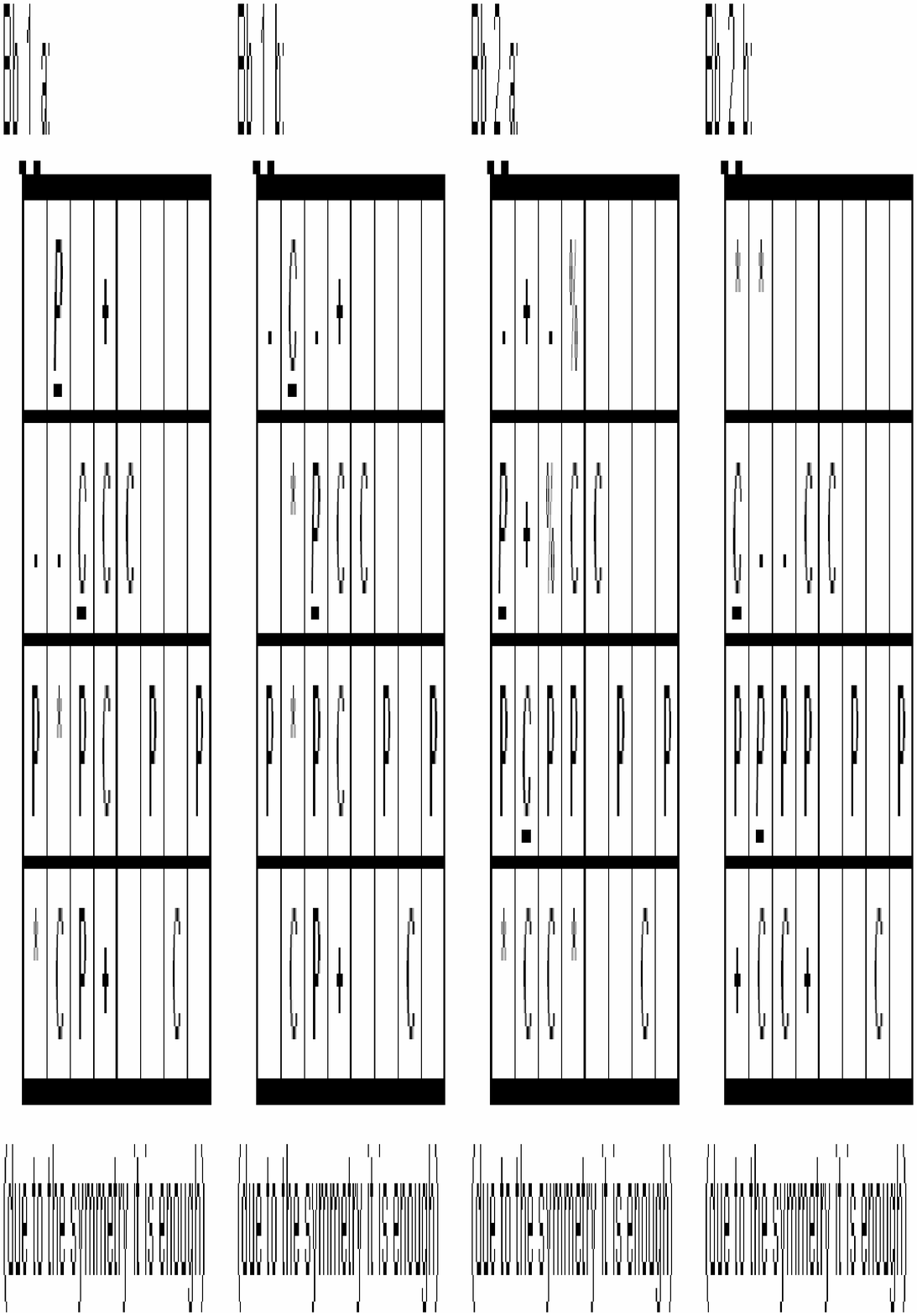}

\includegraphics[bb=0mm 0mm 208mm 296mm, width=120mm, height=16mm, viewport=3mm 4mm 205mm 292mm]{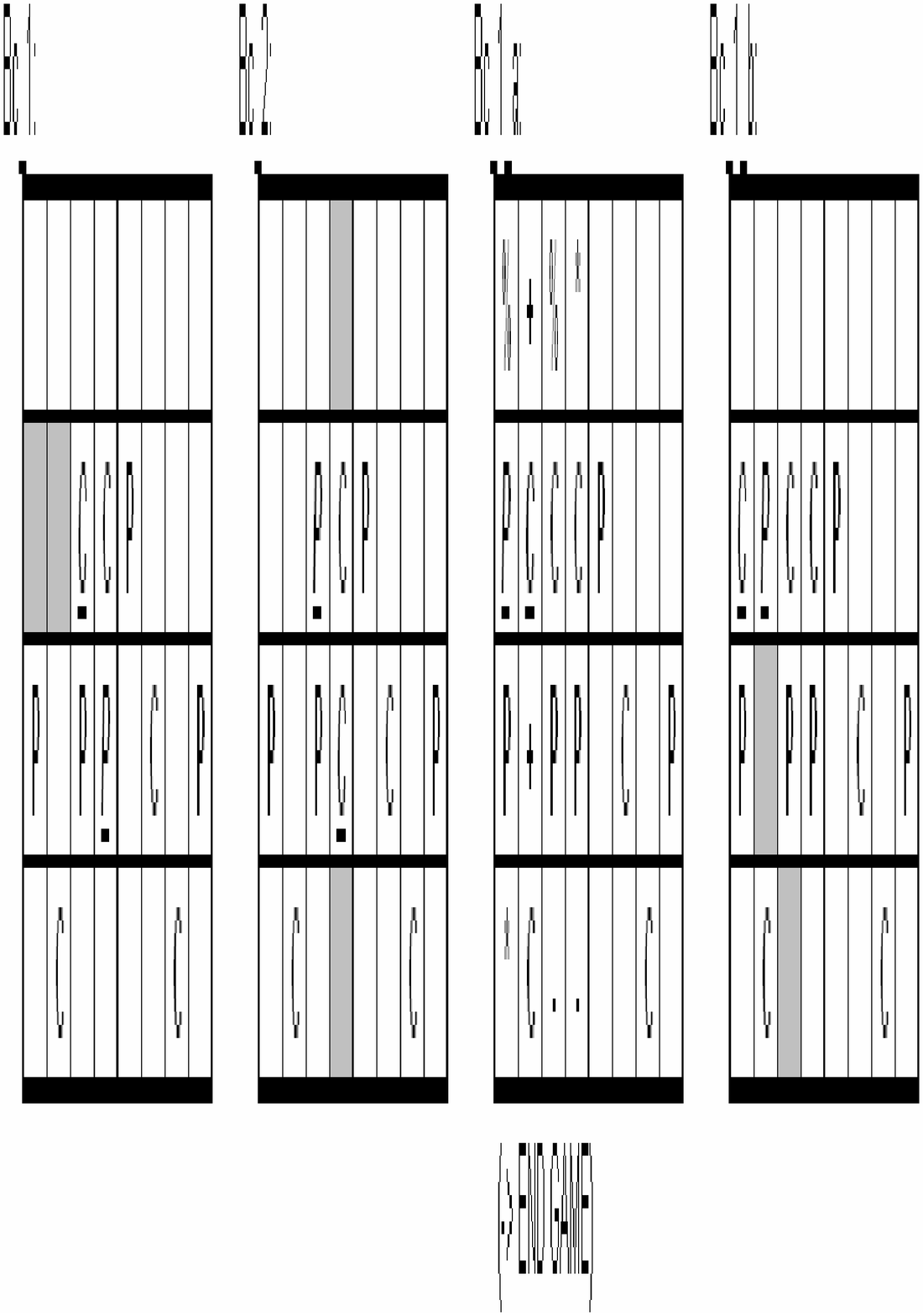}

\includegraphics[bb=0mm 0mm 208mm 296mm, width=120mm, height=16.3mm, viewport=3mm 4mm 205mm 292mm]{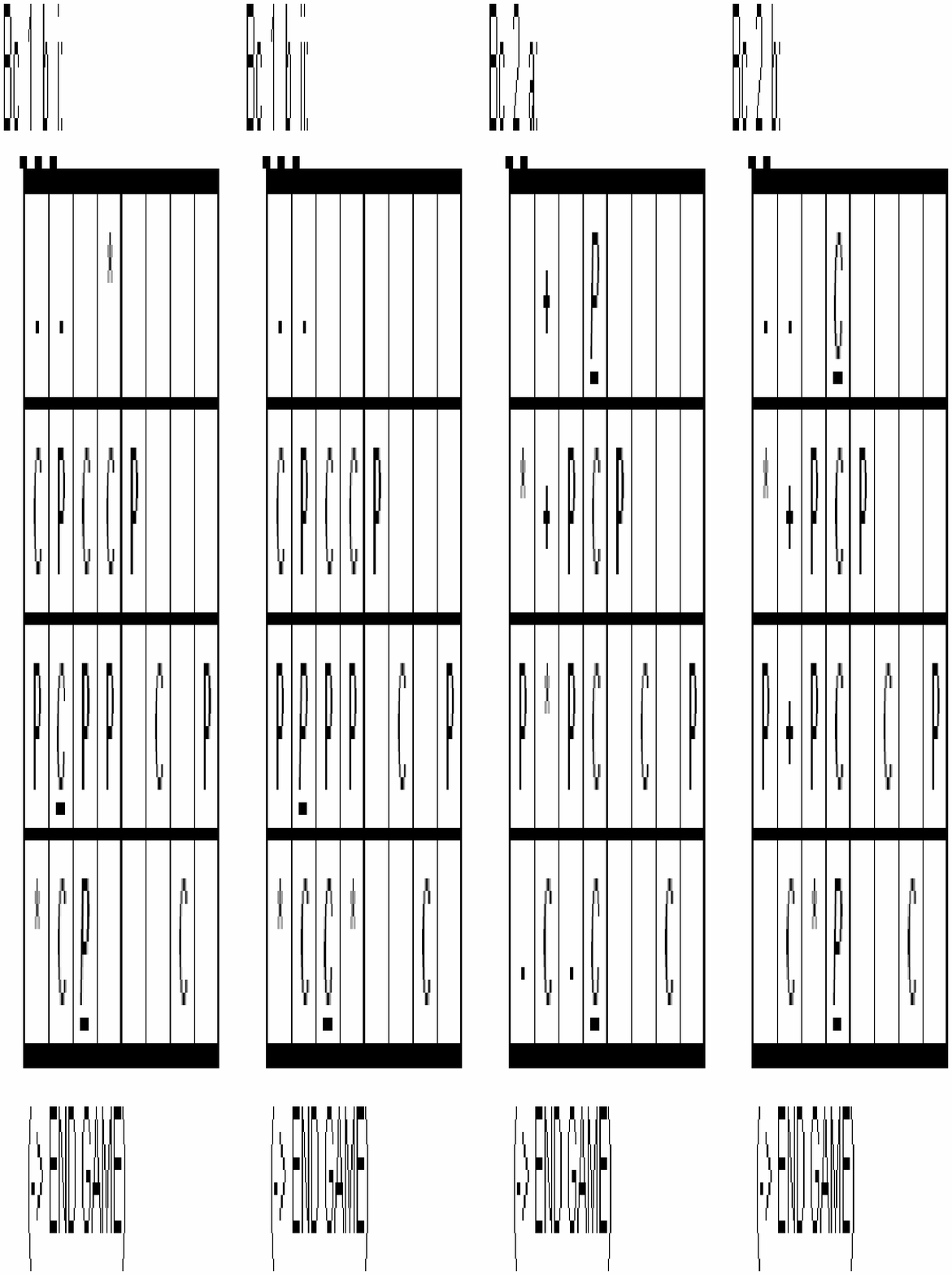}

\includegraphics[bb=0mm 0mm 208mm 296mm, width=120mm, height=17.5mm, viewport=3mm 4mm 205mm 292mm]{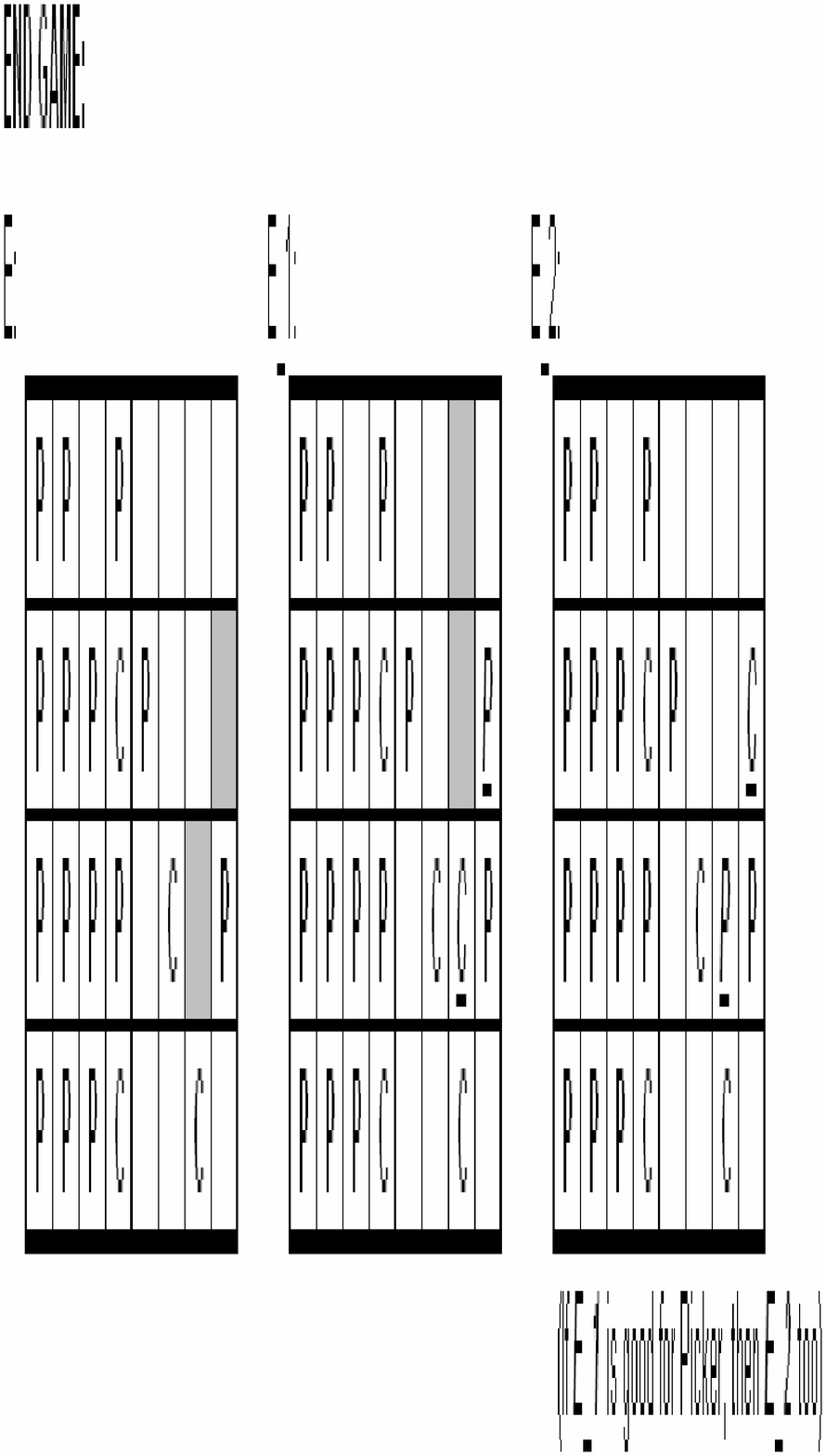}

\includegraphics[bb=0mm 0mm 208mm 296mm, width=120mm, height=16mm, viewport=3mm 4mm 205mm 292mm]{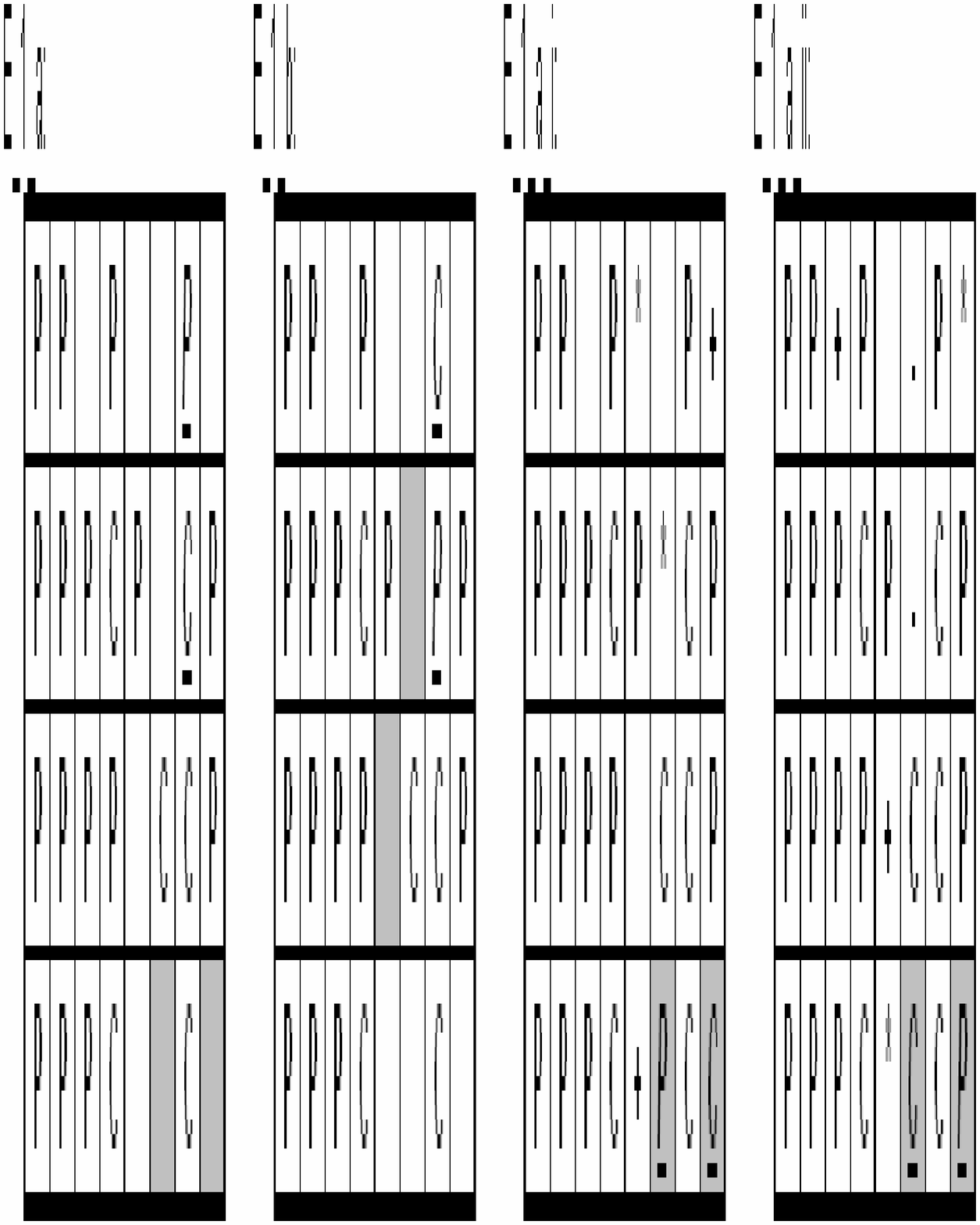}

\includegraphics[bb=0mm 0mm 208mm 296mm, width=120mm, height=16mm, viewport=3mm 4mm 205mm 292mm]{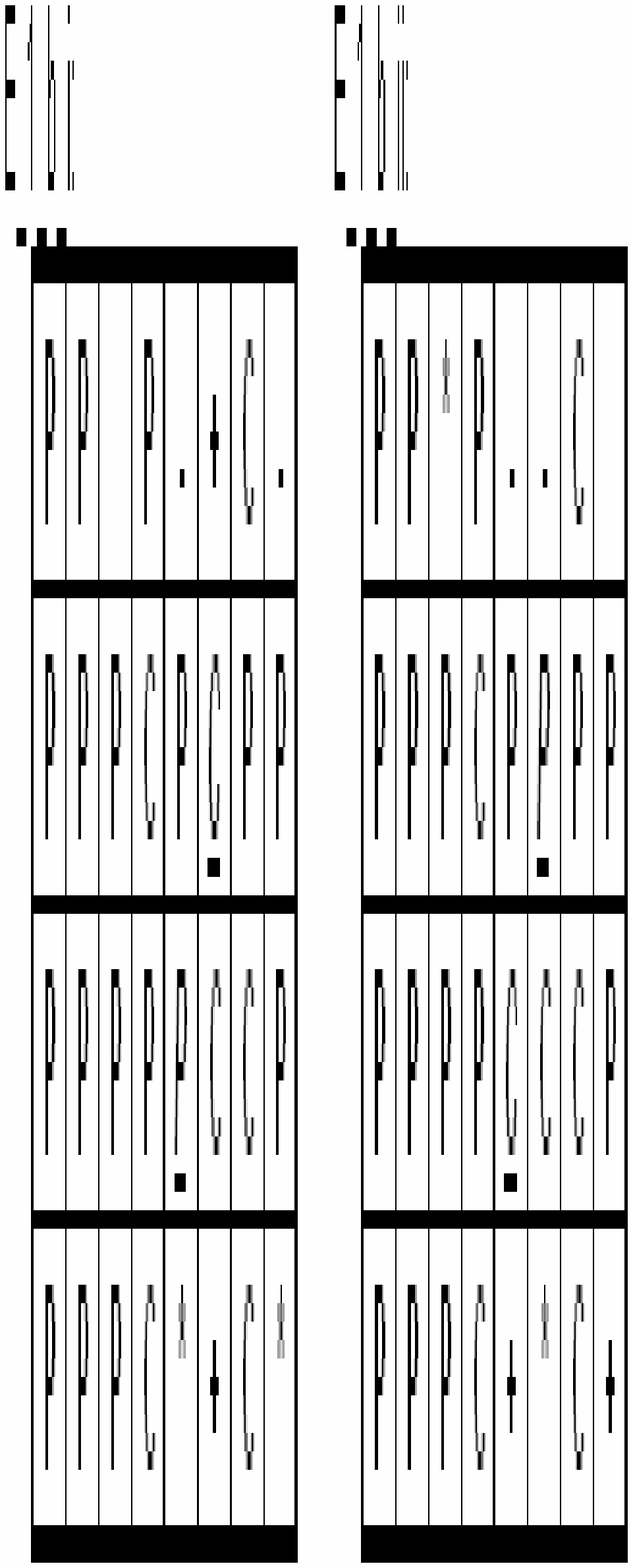}

\end{center}